\documentclass[12pt,a4paper,reqno]{amsproc}
\usepackage{amsmath}
\usepackage{amssymb}
\usepackage{amsthm}
\usepackage[backrefs]{amsrefs}
\usepackage[german,danish,english]{babel}
\usepackage[utf8]{inputenc}
\usepackage[T1]{fontenc}
\usepackage{graphicx}
\usepackage{color}
\usepackage{mathrsfs}
\usepackage{comment}
\usepackage{multirow}
\usepackage{enumitem}
\usepackage[footnote,draft,danish,silent]{fixme}
\usepackage{MnSymbol}
\usepackage{hyphenat}

\numberwithin{equation}{section}

\newcommand{\NN}{\mathbb N}
\newcommand{\ZZ}{\mathbb Z}
\newcommand{\RR}{\mathbb R}
\newcommand{\QQ}{\mathbb Q}

\newcommand{\FF}{\mathbb F}

\DeclareMathOperator{\id}{id}

\DeclareMathOperator{\supp}{supp}

\DeclareMathOperator{\Cay}{Cay}
\DeclareMathOperator{\Sym}{Sym}
\DeclareMathOperator{\Stab}{Stab}

\DeclareMathOperator{\Ker}{Ker}
\DeclareMathOperator{\Sub}{Sub}
\DeclareMathOperator{\Prob}{Prob}
\DeclareMathOperator{\IRS}{IRS}
\DeclareMathOperator{\prodname}{prod}
\DeclareMathOperator{\GL}{GL}
\DeclareMathOperator{\SL}{SL}
\DeclareMathOperator{\stat}{stat}
\DeclareMathOperator{\BS}{BS}
\DeclareMathOperator{\gen}{gen}

\DeclareMathOperator{\findex}{f.i.}
\DeclareMathOperator{\fg}{f.g.}
\DeclareMathOperator{\pclosed}{p.c.}
\DeclareMathOperator{\almostn}{a.n.}

\newcommand{\calX}{\mathcal X}
\newcommand{\calG}{\mathcal G}
\newcommand{\calL}{\mathcal L}

\newcommand{\calC}{\mathcal C}
\newcommand{\calP}{\mathcal P}
\newcommand{\lla}{\langle \! \langle}
\newcommand{\rra}{\rangle \! \rangle}

\newtheorem{stn}{Sætning}[section]

\newtheorem{cor}[stn]{Corollary}

\newtheorem{prop}[stn]{Proposition}
\newtheorem{question}[stn]{Question}
\theoremstyle{definition}
\theoremstyle{remark}\newtheorem{rem}[stn]{Remark}

\theoremstyle{definition}\newtheorem{defn}[stn]{Definition}
\newtheorem{lem}[stn]{Lemma}
\newtheorem{thm}[stn]{Theorem}

\newcommand{\Pau}{P\u{a}unescu }

\title[Stability and Invariant Random Subgroups]{Stability and Invariant Random Subgroups}

\author[O.\ Becker]{Oren Becker}
\address{O.B., Hebrew University, Israel}
\email{oren.becker@mail.huji.ac.il}

\author[A.\ Lubotzky]{Alexander Lubotzky}
\address{A.L., Hebrew University, Israel}
\email{alexlub@math.huji.ac.il}

\author[A.\ Thom]{Andreas Thom}
\address{A.T., TU Dresden, Germany}
\email{andreas.thom@tu-dresden.de}

\begin{document}
\begin{abstract}
Consider $\Sym\left(n\right)$, endowed with the normalized Hamming metric
$d_n$. A finitely-generated group $\Gamma$ is \emph{P-stable} if every
almost homomorphism $\rho_{n_k}\colon \Gamma\rightarrow\Sym\left(n_k\right)$
(i.e., for every $g,h\in\Gamma$,
$\lim_{k\rightarrow\infty}d_{n_k}\left(
\rho_{n_k}\left(gh\right),\rho_{n_k}\left(g\right)\rho_{n_k}\left(h\right)\right)=0$)
is close to an actual homomorphism
$\varphi_{n_k} \colon\Gamma\rightarrow\Sym\left(n_k\right)$.
Glebsky and Rivera observed that finite groups are P-stable, while Arzhantseva
and P\u{a}unescu showed the same for abelian groups and raised many questions,
especially about P-stability of amenable groups.
We develop P-stability in general, and in particular for
amenable groups. Our main tool is the theory of invariant random subgroups (IRS),
which enables us to give a characterization of P-stability among amenable groups,
and to deduce stability and instability of various families of amenable groups.
\end{abstract}

\maketitle

\section{Introduction}

Let $\left(G_{n},d_{n}\right)_{n=1}^{\infty}$ be a sequence of groups
$G_{n}$ equipped with bi\hyp invariant metrics $d_{n}$, and let $\Gamma$
be a finitely-presented\footnote{The main results of this paper will be formulated and proved for general
finitely-generated groups, but for the simplicity of the exposition
in this introduction, we will assume that $\Gamma$ is finitely-presented.} group generated by a finite set $S=\left\{ s_{1},\dotsc,s_{m}\right\} $
subject to the relations $E=\left\{ w_{1},\dotsc,w_{r}\right\} \subset\FF$,
where $\FF$ is the free group on $S$.
In recent years, there has been some interest in the \emph{stability}
of $\Gamma$ with respect to $\calG=\left(G_{n},d_{n}\right)_{n=1}^{\infty}$
(cf. \cite{glebsky-rivera}, \cite{arzhantseva-paunescu},
\cite{DGLT}, and the references within), namely:
\begin{defn}
\label{def:intro-stable}
The group $\Gamma$ is \emph{stable} with respect to $\calG$ if
for every $\epsilon>0$ there exists $\delta>0$ such that if $\overline{g}=\left(g_{1},\dotsc,g_{m}\right)\in G_{n}^{m}$
satisfies $\sum_{i=1}^{r}d_{n}\left(w_{i}\left(\overline{g}\right),\text{id}_{G_{n}}\right)<\delta$,
then $\exists\overline{g}'=\left(g'_{1},\dots,g'_{m}\right)\in G_{n}^{m}$
with $\sum_{i=1}^{m}d_{n}\left(g_{i},g'_{i}\right)<\epsilon$ and
$w_{i}\left(\overline{g}'\right)=\id$ for every $i=1,\dotsc,m$
(i.e. $\overline{g}$ and $\overline{g}'$ are ``$\epsilon$-close'' and $\overline{g}'$ is a ``solution'' for $w_1=1,\dotsc,w_r=1$).
\end{defn}

In other words, every ``almost homomorphism'' from $\Gamma$ to
$G_{n}$ is close to an actual homomorphism. It is not difficult to
show (see \cite{arzhantseva-paunescu}) that the stability of $\Gamma$ with respect to $\left(G_{n},d_{n}\right)_{n=1}^{\infty}$
depends only on the group $\Gamma$, rather than the chosen presentation
- so the notion is well-defined.

The roots of this definition lie in some classical questions, asked
by Halmos, Turing, Ulam and others, whether ``almost solutions''
are always just a small deformation of precise solutions. The most
popular question of this sort, with origins in mathematical physics,
refers to the case where $G_{n}$ are some groups of matrices
and the question asks whether ``almost commuting matrices'' are ``near''
commuting matrices (which is the same as the stability of $\Gamma=\ZZ\times\ZZ$
defined above). The answer in this case depends very much on the metrics
$d_{n}$, e.g. if $G_{n}=U\left(n\right)$, the unitary groups, $\ZZ^{2}$
is stable with respect to the Hilbert-Schmidt norm, but not with respect
to the operator norm (cf. \cite{glebsky2010}, \cite{voiculescu}).
See also the introduction of \cite{arzhantseva-paunescu} for a short survey on
this problem.

In recent years (starting in \cite{glebsky-rivera} and in a more systematic
way in \cite{arzhantseva-paunescu}), there has been an interest in a discrete version,
i.e.\ the case $G_{n}=\Sym\left(n\right)$, the symmetric group on
$\left[n\right]=\left\{ 1,\dotsc,n\right\} $, where $d_{n}$ is the
normalized Hamming distance $d_{n}\left(\sigma,\tau\right)=\frac{1}{n}\cdot \left| \left\{ x\in\left[n\right]\mid\sigma\left(x\right)\neq\tau\left(x\right)\right\}\right| $.

We will refer to this as \emph{permutation stability} (or \emph{P-stability}
for short). One of the motivations to study this comes from ``local
testability'' of systems of equations in permutation groups (see
\cite{arzhantseva-paunescu}, \cite{glebsky-rivera}, \cite{becker-lubotzky}). Another motivation comes from
the hope to find a non-sofic group: As observed in \cite{glebsky-rivera},
non-residually-finite groups which are P-stable are \emph{not} sofic.
So it is desirable to have criteria for a group to be P-stable (see
\cite{DGLT}, for a similar strategy which led to the construction of
non-Frobenius-approximated groups).

But, as of now, there are very few methods and results proving P-stability
of groups. Clearly, free groups are P-stable, Glebsky and Rivera \cite{glebsky-rivera}
showed that finite groups are P-stable and Arzhantseva and \Pau
\cite{arzhantseva-paunescu} showed 
it for abelian groups. Now, a free product of P-stable groups is P-stable,
but this is not known, in general, for direct products!

In this paper, we develop P-stability and non-P-stability criteria.
Some of these are for general groups, but they are especially effective
for amenable groups.
Here is a sample of some conclusions of our work
(see Corollary \ref{cor:polycyclic},
Corollary \ref{cor:bs1n}
and Corollary \ref{cor:abels}):
\begin{thm}
\label{thm:intro-examples}~
\begin{enumerate}[label=(\roman*)]
\item Every polycyclic-by-finite group is P-stable.
\item For every $n\in\ZZ$, the Baumslag-Solitar group
$\BS\left(1,n\right)=\langle x,y\mid xyx^{-1}=y^{n}\rangle$ is P-stable.
\item There exists a finitely-presented solvable subgroup of $\GL_{4}\left(\QQ\right)$
which is not P-stable.
\end{enumerate}
\end{thm}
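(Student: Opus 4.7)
My plan is to derive all three parts from the IRS characterization of P-stability for amenable groups promised in the abstract: an amenable group $\Gamma$ is P-stable if and only if every IRS of $\Gamma$ is a weak-$*$ limit in $\Prob(\Sub(\Gamma))$ of IRSs supported on finite-index subgroups, i.e.\ is ``co-sofic''. Since polycyclic-by-finite groups, the groups $\BS(1,n)$, and the Abels-type group of (iii) are all amenable, each item is reduced to a purely subgroup-dynamical statement on $\Sub(\Gamma)$.

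For (i), I would induct on the Hirsch length of $\Gamma$. The base case (finite or virtually abelian) amounts in the IRS language to the statement that every IRS is co-sofic, which is a reformulation of the theorems of Glebsky--Rivera and Arzhantseva--P\u{a}unescu. For the inductive step, write $1\to N\to\Gamma\to\Gamma/N\to 1$ with $N\cong\ZZ$ normal in a finite-index subgroup. Given an IRS $\mu$ of $\Gamma$, I would push it forward to $\Gamma/N$, invoke the inductive hypothesis to obtain finite-index approximations there, pull these back along the quotient map, intersect with the tower of subgroups $k!\cdot N\le N$ to produce finite-index subgroups of $\Gamma$ respecting the $N$-marginal, and finally average over $\Gamma$-conjugates to restore conjugation invariance. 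The key point is that the $N$-marginal of $\mu$ is a $\Gamma$-invariant probability measure on the countable subgroup lattice of $\ZZ$, which is trivially co-sofic.

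For (ii), $\BS(1,n)=\ZZ[1/n]\rtimes\ZZ$ is metabelian and residually finite. I would classify its ergodic IRSs through the extension: the restriction to the derived subgroup $\ZZ[1/n]$ is a $\ZZ$-invariant IRS on $\ZZ[1/n]$, described by the dynamics of multiplication by $n$ and its $n$-adic completions. For each such ergodic IRS I would construct an explicit sequence of finite-index subgroups of $\BS(1,n)$ whose conjugation orbits converge weak-$*$ to it, relying on the residual finiteness and the fact that $n^k$-th roots inside $\ZZ[1/n]$ are eventually killed in finite quotients in a controlled way. By the characterization, P-stability follows. For (iii), I would take (a variant of) Abels' group, the upper triangular matrices in $\GL_4(\ZZ[1/p])$ with $(1,1)=(4,4)=1$; this is finitely presented, solvable, contained in $\GL_4(\QQ)$, and its centre contains a copy of $\ZZ[1/p]$. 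I would exhibit an ergodic IRS whose typical subgroup contains a fixed infinitely $p$-divisible central element $z$, and show that in any finite quotient of $\Gamma$ the image of $z$ has bounded order, so a finite-index subgroup in the support of a would-be approximant must exclude $p^k$-divisions of $z$ for $k$ large, which is incompatible with the limiting measure in the Chabauty topology.

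I expect the main obstacle to be the inductive lift in (i): one must produce finite-index subgroups of $\Gamma$ whose IRSs simultaneously approximate a prescribed IRS of the quotient $\Gamma/N$ and have the correct restriction to the central $\ZZ$, while surviving the conjugation-averaging in the limit. The remaining work is then more quantitative than conceptual: the dynamical classification needed in (ii) is standard once the $N$-marginals are understood, and the obstruction in (iii), once phrased in the IRS language via the infinite $p$-divisibility of $z$, should follow directly from the characterization.
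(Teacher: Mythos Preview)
Your approach to (iii) is essentially the paper's: the Dirac IRS at the central cyclic subgroup $H=\ZZ\subset Z(A_p)\cong\ZZ[1/p]$ is co-sofic in $\FF$ by amenability, but any finite-index subgroup of $A_p$ containing the generator of $H$ must contain all of $Z(A_p)$ (since finite-index subgroups of $\ZZ[1/p]$ containing $1$ are all of $\ZZ[1/p]$), so $H$ is not a Chabauty limit of finite-index subgroups and the IRS is not co-sofic in $\Gamma$. The paper phrases this as ``$H$ is a finitely-generated normal subgroup which is not profinitely closed'', which is the same obstruction.

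For (i) and (ii), however, you are working much harder than necessary, and your inductive step in (i) has a real gap. An IRS $\mu$ of $\Gamma$ is \emph{not} determined by its pushforward to $\Gamma/N$ together with its ``$N$-marginal''; there is a fibred joint distribution you never address, and the operation ``pull back, intersect with $k!\cdot N$, average over conjugates'' does not in general recover $\mu$ in the weak-$*$ limit. You correctly flag this lift as the main obstacle, but you do not resolve it. Similarly, for (ii) a full classification of ergodic IRSs of $\BS(1,n)$ via $n$-adic dynamics is a substantial undertaking that you only gesture at.

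The paper bypasses all of this with a single observation you are missing: if $\Sub(\Gamma)$ is \emph{countable}, then every IRS of $\Gamma$ is purely atomic, hence supported on almost-normal subgroups. Co-soficity then reduces to showing that each almost-normal subgroup $H$ is profinitely closed, since the finite-index normal subgroups of $N_\Gamma(H)$ above $H$ give a sequence of finite-index IRSs converging to the uniform IRS on $H^\Gamma$. For polycyclic-by-finite groups, $\Sub(\Gamma)$ is countable because every subgroup is finitely generated, and every subgroup is profinitely closed by Mal'cev's theorem (LERF). For $\BS(1,n)\cong\ZZ[1/n]\rtimes\ZZ$, one checks directly that $\Sub(\ZZ[1/n])$ is countable (hence so is $\Sub(\Gamma)$, since the quotient is Noetherian), and that every almost-normal subgroup is profinitely closed because $\BS(1,n)$ is a constructible solvable group, a class closed under quotients and finite-index subgroups and consisting of residually finite groups. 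No induction, no ergodic classification.
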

Part (i) of the theorem above answers a question raised
in \cite{konig-leitner-neftin} (the very special case of
the group $\BS\left(1,-1\right)$ was previously asked in
\cite{arzhantseva-paunescu}, see the
paragraph after Example 7.3 there).
Part (ii) completes the classification of P-stability of the Baumslag-Solitar groups $\BS\left(m,n\right)$ which was started in Example 7.3 of \cite{arzhantseva-paunescu}, except for the case $\left|m\right|=\left|n\right|\geq 2$.
Part (iii) shows that there is a finitely-presented amenable residually-finite group which is not P-stable, answering a question posed in \cite{arzhantseva-paunescu} (see, in \cite{arzhantseva-paunescu}, the paragraph before Theorem 7.2,  and Theorem 7.2iii).

\vspace{5mm}
The main novel method in the current paper is the use of the theory
of invariant random subgroups (IRS). This theory, which formally goes
back to the seminal work of Stuck and Zimmer \cite{stuck-zimmer}, got new life
in recent years starting with the work of Abert-Glasner-Virag \cite{agv}:
an IRS is defined as a $\Gamma$-invariant probability measure on the compact
space $\Sub(\Gamma)$ of all (closed) subgroups of $\Gamma$.
Let $\IRS(\Gamma)$ be the space of IRSs of $\Gamma$.
Fixing a surjective map $\pi:\FF\twoheadrightarrow\Gamma$
from the free group on $S$ onto $\Gamma$, we can think of $\IRS(\Gamma)$
as a subspace of $\IRS(\FF)$.

If $\Gamma$ is a discrete group, $\mu\in\IRS(\Gamma)$ will be called
a \emph{finite-index IRS} if it is atomic and all of its atoms are finite-index
subgroups of $\Gamma$. The IRSs which are a limit of the finite-index ones
are called \emph{co-sofic} (see \cite{gelander-icm}, Definition 15). We then prove
(see Theorem \ref{thm:main-theorem}):
\begin{thm}
\label{thm:intro-main}Let $\pi:\FF\twoheadrightarrow\Gamma$ be as before
and $\IRS(\Gamma)\subset\IRS(\FF)$.
\begin{enumerate}[label=(\roman*)]
\item If $\Gamma$ is P-stable and $\mu\in\IRS(\Gamma)$ is co-sofic
in $\FF$, then $\mu$ is co-sofic in $\Gamma$.
\item If $\Gamma$ is amenable, then $\Gamma$ is P-stable if and only if
every $\mu\in\IRS(\Gamma)$ is co-sofic (in $\Gamma$).
\end{enumerate}
\end{thm}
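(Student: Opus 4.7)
The strategy rests on a dictionary between tuples $\overline{g}=(g_1,\dotsc,g_m)\in\Sym(n)^m$ (equivalently, actions of $\FF$ on $[n]$) and IRSs of $\FF$: set $\mu_{\overline{g}}\in\IRS(\FF)$ to be the pushforward of the uniform measure on $[n]$ under $x\mapsto\Stab_{\langle\overline{g}\rangle}(x)$. For any word $w\in\FF$, $\mu_{\overline{g}}(\{H:w\in H\})$ equals the fraction of fixed points of $w(\overline{g})$. Hence the defects $d_n(w_i(\overline{g}),\id)$ on all relators of $\Gamma$ are small iff $\mu_{\overline{g}}$ is weak-$*$ close to $\IRS(\Gamma)$, and finite-index IRSs of $\Gamma$ correspond exactly to genuine homomorphisms $\Gamma\to\Sym(n)$. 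Moreover, Hamming-close tuples visibly give weak-$*$-close $\mu$'s; the converse, that two tuples with weak-$*$-close $\mu$'s can (when $\Gamma$ is amenable) be conjugated in $\Sym(n)$ to be Hamming-close on each generator, is the deep half of the dictionary.

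\emph{Part (i).} Assuming $\Gamma$ is P-stable and $\mu\in\IRS(\Gamma)$ is co-sofic in $\FF$, take finite-index $N_k\le\FF$ with $\mu_{N_k}\to\mu$, and let $\overline{g}^{(k)}$ be the tuple given by left multiplication of generators on $\FF/N_k$; then $\mu_{\overline{g}^{(k)}}=\mu_{N_k}\to\mu\in\IRS(\Gamma)$, so by the dictionary $\overline{g}^{(k)}$ is an almost-homomorphism of $\Gamma$. P-stability yields tuples $\overline{h}^{(k)}$ satisfying every relator and Hamming-close to $\overline{g}^{(k)}$, i.e., genuine homomorphisms $\varphi_k\colon\Gamma\to\Sym(\FF/N_k)$. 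The easy direction of the dictionary forces $\mu_{\varphi_k}\to\mu$, exhibiting $\mu$ as a limit of finite-index IRSs of $\Gamma$.

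\emph{Part (ii).} For the forward direction, suppose $\Gamma$ is amenable and P-stable. A Rokhlin / Ornstein--Weiss approximation for p.m.p.\ actions of amenable groups (applied to the coset action associated to a given $\mu\in\IRS(\Gamma)$) should yield that every IRS of an amenable group is co-sofic in the ambient $\FF$ --- the Schreier graphs of $\mu$-random subgroups are hyperfinite and so approximable by finite Schreier graphs of $\FF$. Part (i) then delivers co-soficity in $\Gamma$. For the backward direction, given almost-homomorphisms $\overline{g}^{(k)}\in\Sym(n_k)^m$, compactness of $\IRS(\FF)$ produces a subsequence with $\mu_{\overline{g}^{(k)}}\to\mu$; the dictionary places $\mu$ in $\IRS(\Gamma)$, hypothesis makes it a limit of $\mu_{\varphi_\ell}$ for genuine homomorphisms $\varphi_\ell\colon\Gamma\to\Sym(m_\ell)$, and after matching sizes (by disjoint unions or multiplicities) and a diagonal extraction, the converse half of the dictionary furnishes $\tau_k\in\Sym(n_k)$ making $\tau_k\varphi_{\ell(k)}(\cdot)\tau_k^{-1}$ Hamming-close to $\overline{g}^{(k)}$ on each generator, witnessing P-stability.

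The principal obstacle is establishing the converse half of the dictionary used in the backward direction of (ii): passing from weak-$*$ closeness of IRSs to Hamming closeness of tuples, up to conjugation in $\Sym(n)$. The easy direction is a fixed-point count, but the converse is an orbit/transport statement, and amenability is expected to enter precisely here, through a Følner or quasi-tiling matching of the two Schreier structures that produces the conjugating permutation $\tau_k$.
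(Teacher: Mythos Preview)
Your overall architecture matches the paper's proof almost exactly: the dictionary between tuples and IRSs, the use of P-stability in (i) to upgrade $\FF$-co-soficity to $\Gamma$-co-soficity, the appeal to Ornstein--Weiss hyperfiniteness for the forward direction of (ii), and the size-matching plus diagonal-extraction step in the backward direction all appear in the paper in the same order and with the same logic.

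The one substantive gap is exactly the obstacle you flag: the ``converse half of the dictionary'' in the backward direction of (ii). You speculate that it should follow from a direct F{\o}lner or quasi-tiling matching argument. The paper does not proceed that way. Instead it invokes the Newman--Sohler theorem (in Elek's formulation): for any \emph{hyperfinite} family of bounded-degree graphs, Benjamini--Schramm closeness implies edit-distance closeness. The paper first shows (using Ornstein--Weiss together with Schramm's result relating hyperfinite limits to hyperfinite graph sequences) that the collection of all finite Schreier graphs of an amenable $\Gamma$ is a hyperfinite family; then it encodes the $\FF$-actions as bounded-degree undirected graphs and applies Newman--Sohler. So amenability enters, as you suspected, precisely at this step---but through graph hyperfiniteness and a combinatorial testability theorem rather than through a bespoke F{\o}lner matching. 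This is not a cosmetic difference: Newman--Sohler is a nontrivial result, and I am not aware of a way to shortcut it here by a direct quasi-tiling argument.

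One minor point on (i): co-soficity in $\FF$ gives a sequence of finite-index \emph{IRSs}, not necessarily Dirac measures at single subgroups $N_k$. The paper handles this by first reducing to finitely-supported finite-index IRSs and then realizing each as the IRS of a disjoint union of coset actions; your argument goes through with this adjustment.
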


Theorem \ref{thm:intro-main}(ii) gives an ``if and only if'' criterion for
P-stability of amenable groups.
A crucial ingredient in the proof of this criterion is a result of Newman and Sohler
\cites{newman-sohler2011, newman-sohler2013}
which gives testability
of properties of hyperfinite families of graphs (see \cite{becker-lubotzky}
for more in this direction).
Actually, in the sequel, it will be more convenient for us to use Elek's treatment
\cite{elek2012} of the aforementioned theorem.
The amenability assumption in Theorem \ref{thm:intro-main}(ii) turns out to be
essential. Indeed, by \cite{becker-lubotzky-property-t}, the groups
$\SL_n(\ZZ)$ are not P-stable for $n\geq 3$, but as a corollary of the Stuck-Zimmer
Theorem \cite{stuck-zimmer},
all of their IRSs are supported on finite-index subgroups (and the trivial
subgroup $\{1\}$), and in particular they are co-sofic.

Let us sketch the argument for the $\Leftarrow$ direction of
Theorem \ref{thm:intro-main}(ii).
We think of the hypothesis that every IRS of $\Gamma$ is co-sofic as a
``density condition''.
Let $(X_n)_{n=1}^{\infty}$ be a sequence of finite
sets with almost actions of $\Gamma$, where $\Gamma$ is amenable. Then, as $n$ tends to infinity, $X_n$ converges to a p.m.p.\ action of $\Gamma$ and hence one obtains an $\IRS$ of $\Gamma$. Now, assuming the density condition, this $\IRS$ also arises as
a limit of finite $\Gamma$-actions. A little argument ensures that these actions can happen on the same sets $X_n$.
Both the sequence of actions and the sequence of almost actions are hyperfinite,
since the group $\Gamma$ is amenable.
Hence, by the Elek-Newman-Sohler result (on almost isomorphism of hyperfinite graphs with almost the same local statistics),
the almost actions are almost conjugate to the actions if $n$ is large enough -- end of the proof.
The role of the density condition in the argument above is to ensure that there are enough actions to model any possible $\IRS$ that could come up.
For the $\Rightarrow$ direction, any $\IRS$ (if $\Gamma$ is amenable) actually arises as a limit of almost actions of $\Gamma$ on finite sets, so that the condition of density of finite-index-$\IRS$ is also necessary.

In general, it is not easy to check
the criterion of Theorem \ref{thm:intro-main}(ii),
but if $\Gamma$ has only countably many subgroups
(see \cite{cutolo-smith} for a characterization of solvable groups with this property),
then every $\mu\in\IRS$$(\Gamma)$ is atomic and hence
supported only on almost-normal subgroups, i.e. subgroups $H$ for
which $\left[\Gamma:N_{\Gamma}\left(H\right)\right]<\infty$. 
This enables us to prove
(see Proposition \ref{prop:positive-application}):
\begin{thm}
\label{thm:application-positive}If $\Sub(\Gamma)$ is
countable and every almost-normal subgroup of $\Gamma$ is profinitely-closed
in $\Gamma$, then every $\mu\in\IRS(\Gamma)$ is co-sofic
in $\Gamma$, and if $\Gamma$ is also amenable, then $\Gamma$ is
P-stable.
\end{thm}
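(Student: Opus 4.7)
The second assertion is immediate from Theorem~\ref{thm:intro-main}(ii), so the task is to show every $\mu\in\IRS(\Gamma)$ is co-sofic in $\Gamma$. First, the countability of $\Sub(\Gamma)$ forces $\mu$ to be purely atomic, and $\Gamma$-invariance under conjugation forces every atom to lie in a finite conjugation orbit (otherwise $\mu$ would have infinite mass), hence to be almost-normal. I may therefore write $\mu = \sum_i c_i \mu_{H_i}$ as a countable convex combination, where each $\mu_{H_i}$ is the uniform probability measure on the finite $\Gamma$-conjugacy class of an almost-normal subgroup $H_i$.

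The core step is to approximate a single $\mu_H$ ($H$ almost-normal) by finite-index IRSs in the Chabauty weak-$*$ topology. Set $N:=N_\Gamma(H)$, $m:=[\Gamma:N]$, and fix a transversal $g_1,\dots,g_m$, so that $\mu_H = \tfrac{1}{m}\sum_{j=1}^{m}\delta_{g_jHg_j^{-1}}$. Since $\Sub(\Gamma)$ is countable, I enumerate the finite-index overgroups of $H$, refine each by the $N$-equivariant closure
\[
L\;\longmapsto\;\bigcap_{n\in N/(N\cap N_\Gamma(L))} nLn^{-1}
\]
(a finite intersection, hence still of finite index and still an overgroup of $H$), and take $K_n$ to be the intersection of the first $n$ refinements. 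Profinite-closedness of $H$ gives $\bigcap_n K_n = H$, and by construction $N\subseteq N_\Gamma(K_n)$. This last inclusion entails the identity
\[
\mu_{K_n}\;=\;\frac{1}{m}\sum_{j=1}^{m}\delta_{g_jK_ng_j^{-1}},
\]
because the map $j\mapsto g_jK_ng_j^{-1}$ surjects onto the $\Gamma$-orbit of $K_n$ with fibers of uniform size $[N_\Gamma(K_n):N]$. Given any finite $F\subset\FF$, once $n$ is large enough that $K_n$ coincides with $H$ on the finite set $\bigcup_j g_j^{-1}Fg_j$, one has $g_jK_ng_j^{-1}\cap F = g_jHg_j^{-1}\cap F$ for every $j$, so the two measures assign identical values to every cylinder set over $F$. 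Since such cylinders generate the Chabauty topology, $\mu_{K_n}\to\mu_H$ weak-$*$.

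To recover the full $\mu$, I truncate the decomposition: for each $N$, the probability measure
\[
\nu_{N,n}\;:=\;\sum_{i=1}^N c_i\mu_{H_i}^{(n)}+\Bigl(1-\sum_{i=1}^N c_i\Bigr)\delta_\Gamma
\]
is a finite-index IRS, where $\mu_{H_i}^{(n)}$ is the approximant of the previous step and $\delta_\Gamma$ places the residual mass on the finite-index subgroup $\Gamma\leq\Gamma$. Its weak-$*$ distance from $\mu$ is bounded by $2\sum_{i>N}c_i$ plus a term vanishing as $n\to\infty$, so a diagonal extraction exhibits $\mu$ as a weak-$*$ limit of finite-index IRSs, completing the argument.

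The principal obstacle I foresee lies in the normalizer alignment built into the construction of $K_n$: without enforcing $N\subseteq N_\Gamma(K_n)$, the $\Gamma$-orbit of $K_n$ need not be indexed by $\{g_j\}$ with equal fibers, and the weights $1/[\Gamma:N_\Gamma(K_n)]$ could differ from $1/m$ even in the limit. The $N$-equivariant refinement is precisely the device that aligns the two orbits so that the cylinder comparisons go through atom by atom; this is where both the profinite-closedness of $H$ and the countability of $\Sub(\Gamma)$ are jointly used.
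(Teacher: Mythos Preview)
Your proof is correct and follows the same route as the paper's: reduce to a single almost-normal $H$, then approximate $\mu_H$ by the uniform IRSs on the conjugacy classes of finite-index subgroups $K_n\to H$ satisfying $N_\Gamma(H)\subseteq N_\Gamma(K_n)$, so that the transversal $g_1,\dots,g_m$ indexes their orbits with equal multiplicities. The only difference is that the paper obtains its approximants as finite-index \emph{normal} subgroups of $N:=N_\Gamma(H)$ directly (observing that $H$ is profinitely closed and normal in $N$ since $[\Gamma:N]<\infty$) rather than $N$-symmetrizing finite-index overgroups in $\Gamma$; note also that the countability of $\Sub(\Gamma)$ is used only to force $\mu$ to be atomic, not in the normalizer alignment.
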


The first two points of Theorem \ref{thm:intro-examples} are deduced from
Theorem \ref{thm:application-positive}. We also show that if there
exists a \emph{finitely-generated} almost-normal subgroup of $\Gamma$
which is \emph{not} profinitely-closed, then $\Gamma$ is not P-stable (assuming that $\Gamma$ is amenable, but also
under a milder condition related to soficity), and this is used to prove part
(iii) of Theorem \ref{thm:intro-examples}.

The paper is organized as follows:
In Section \ref{sec:defs}, we give the definitions of P-stable equations and groups,
and explain the relation between the two notions.
In Sections \ref{sec:IRS} and \ref{sec:profinite},
we review the needed facts regarding
invariant random subgroups and the profinite topology, respectively.
In Section \ref{sec:borel}, we review the theories of hyperfinite actions
and graphs, and adapt the Newman-Sohler Theorem to our needs.
In Section \ref{sec:main-theorems}, we prove Theorem \ref{thm:intro-main}.
Finally, in Section \ref{sec:applications},
we use Theorem \ref{thm:intro-main} to prove Theorem \ref{thm:intro-examples}
and Theorem \ref{thm:application-positive}.

Let us end with saying that while our results give far reaching extensions
of the groups for which P-stability or non-P-stability is known,
we are still far from having the complete picture even for amenable
groups (or even for solvable groups). We still can not answer the
question whether, for given P-stable groups $\Gamma_1$
and $\Gamma_2$, $\Gamma_{1}\times\Gamma_{2}$ is also P-stable. Is
LERF a sufficient condition? More specifically, is the Grigorchuk group P-stable? etc.
(see Question \ref{question:amenable-lerf-is-stable}
and the discussion surrounding it).
Our work gives further motivation to understand and classify the IRS
of various finitely-generated groups.

\section*{Acknowledgements}
The authors would like to thank G\'{a}bor Elek for providing the proof of
Proposition \ref{prop:amenable-schreier-hyperfinite},
and to Benjy Weiss, Yair Glasner, Shahar Mozes and Tsachik Gelander for
valuable discussions.
This work is part of the PhD thesis of the first author at the Hebrew University of
Jerusalem.
The second author was supported in part by the ERC and the NSF.
The third author was supported in part by the ERC Consolidator Grant No.\ 681207. He thanks the Hebrew University for its hospitality during a visit in November 2017.
This research was partly done in the Israel Institute for Advanced Studies (IIAS) during the 2017-18 program on High Dimensional Combinatorics.

\section{Notation and Conventions}
\label{sec:notation}

Throughout the paper, we fix the following: Let $\Gamma$ be a finitely-generated
group. Present $\Gamma$ as a quotient of a finitely-generated free group
$\FF$ with quotient map $\pi\colon\FF\twoheadrightarrow \Gamma$. Fix
a finite basis $S=\left\{ s_{1},\dotsc,s_{m}\right\} $ for $\FF$.
Note that every result we prove for $\Gamma$ applies to $\FF$
as well as a special case by viewing $\FF$ as a quotient of
itself with $\pi$ being the identity map.

Recall that a $\Gamma$-set is a set $X$ endowed with an action of $\Gamma$, i.e. a homomorphism $\rho\colon \Gamma\rightarrow\Sym(X)$ called the \emph{structure homomorphism} of the action. When $\rho$ is understood from the context, we write $g\cdot x$ for $\rho(g)(x)$ where $g\in \Gamma$ and $x\in X$. We also write $\Gamma\curvearrowright X$ when we want to refer to an action of $\Gamma$ on a set $X$,
but suppress the structure homomorphism $\rho$. For a subgroup $H$ of $\Gamma$, we endow the coset space $\Gamma/H$, by default, with the action given by
$g\cdot(g_{1}H)=\left(gg_{1}\right)H$.

For a subset $A$ of $\Gamma$:
Write $A^{-1}=\left\{ a^{-1}\right\} _{a\in A}$ and $A^{\pm1}=A\cup A^{-1}$.
Write $\langle A\rangle$ for the subgroup generated by $A$ and
$\lla A\rra$ for the normal-closure of
$A$ in $\Gamma$ (i.e. the smallest normal subgroup of $\Gamma$ which contains
$A$, or, equivalently, the subgroup consisting of products of $\Gamma$-conjugates
of elements of $A^{\pm1}$).

For a $\Gamma$-set $X$:
Define a metric $d_{X}$ on $X$ where $d_{X}\left(x,y\right)$ is the
length, with respect to $S^{\pm1}$, of the shortest word $w\in\FF$
for which $w\cdot x=y$ (or $\infty$ if no such word exists,
but we shall always work within connected components anyway).
For an element $g\in \Gamma$ and a subset $A\subset \Gamma$, write $g\cdot A=\left\{ g\cdot a\mid a\in A\right\}$.
We use the notation $\coprod$ for disjoint unions, and write $X^{\coprod k}$
for the disjoint union of $k$ copies of $X$.

For a metric space $X$:
For an integer $r\geq0$ and a point $x\in X$, write $B_{X}\left(x,r\right)=\left\{ y\in X\mid d_{X}\left(x,y\right)\leq r\right\} $.
For an integer $r\geq0$ and a subset $A\subset X$, write $B_{X}\left(A,r\right)=\cup_{a\in A}B_{X}\left(a,r\right)$.
In case $X=\Gamma$, write $B_{\Gamma}\left(r\right)$ for $B_{\Gamma}\left(1_{\Gamma},r\right)$.

For a logical formula $\varphi$, we write ${\bf 1}_{\varphi}$ to
mean $1$ if $\varphi$ holds in the given context, and $0$ otherwise.
For a subgroup $H$ of $\Gamma$, write $H^{\Gamma}$ for the set of subgroups of $\Gamma$
which are conjugate to $H$, i.e. $H^\Gamma=\left\{ H^{g}\mid g\in \Gamma\right\}$.
Then, $\left|H^{\Gamma}\right|=\left[\Gamma:N_{\Gamma}\left(H\right)\right]$, and
we say that $H$ is \emph{almost-normal} in $\Gamma$ if $\left|H^{\Gamma}\right|<\infty$.
For an element $x$ in a measurable space $X$, we write $\delta_{x}$
for the Dirac measure at $x$. For $n\in\NN$, denote
$\left[n\right]=\left\{1,\dotsc,n\right\}$.

\section{Definitions}
\label{sec:defs}
\subsection{P-stable equations}

We refer to the elements of the basis $S$ of $\FF$ as \emph{letters},
and to the elements of $\FF$ as \emph{words}. For a word $w\in\FF$,
an integer $n\geq1$ and a tuple of permutations $\left(\sigma_{1},\dotsc,\sigma_{m}\right)\in\Sym\left(n\right)^{m}$,
we write $w\left(\sigma_{1},\dotsc,\sigma_{m}\right)$ for the element
of $\Sym\left(n\right)$ resulting from the substitution $s_{1}\mapsto\sigma_{1},\dotsc,s_{m}\mapsto\sigma_{m}$
applied to the word $w$. That is, if $w=s_{i_{1}}^{\epsilon_{1}}\cdots s_{i_{l}}^{\epsilon_{l}}$
for some integers $l\geq0$, $i_{1},\dotsc,i_{l}\in\left[m\right]$
and $\epsilon_{1},\dotsc,\epsilon_{l}\in\left\{ +1,-1\right\} $,
then $w\left(\sigma_{1},\dotsc,\sigma_{m}\right)=\sigma_{i_{1}}^{\epsilon_{1}}\cdots\sigma_{i_{l}}^{\epsilon_{l}}\in\Sym\left(n\right)$.
\begin{defn}
For $n\in\NN$, the \emph{normalized Hamming distance} $d_{n}$
on $\Sym\left(n\right)$ is defined by $d_{n}\left(\sigma_{1},\sigma_{2}\right)=\frac{1}{n}\left|\left\{ x\in\left[n\right]\mid\sigma_{1}\left(x\right)\ne\sigma_{2}\left(x\right)\right\} \right|$
where $\sigma_{1},\sigma_{2}\in\Sym\left(n\right)$.
\end{defn}

Note that $d_{n}$ is a bi-invariant metric on $\Sym\left(n\right)$.
\begin{defn}
Let $n\in\NN$, $E\subset\FF$ and $\left(\sigma_{1},\dotsc,\sigma_{m}\right)\in\Sym\left(n\right)^{m}$.
Then,
\begin{enumerate}[label=(\roman*)]
\item The tuple $\left(\sigma_{1},\dotsc,\sigma_{m}\right)$ is a \emph{solution}
for the system of equations $\left\{ w=1\right\} _{w\in E}$ if $w\left(\sigma_{1},\dotsc,\sigma_{m}\right)=1$
for each $w\in E$.
\item Assume that $E$ is a finite set. For $\delta>0$, the tuple $\left(\sigma_{1},\dotsc,\sigma_{m}\right)$
is a \emph{$\delta$-solution} for the system of equations
$\left\{ w=1\right\} _{w\in E}$ if
\[
\sum_{w\in E}d_{n}\left(w\left(\sigma_{1},\dotsc,\sigma_{m}\right),1\right)\leq\delta
\]
\end{enumerate}
\end{defn}

\begin{defn}
For $n\in\NN$ and $\overline{\sigma}=\left(\sigma_{1},\dotsc,\sigma_{m}\right),\overline{\tau}=\left(\tau_{1},\dotsc,\tau_{m}\right)\in\Sym\left(n\right)^{m}$,
define $d_{n}\left(\overline{\sigma},\overline{\tau}\right)=\sum_{i=1}^{m}d_{n}\left(\sigma_{i},\tau_{i}\right)$.
For $\epsilon>0$, if $d_{n}\left(\overline{\sigma},\overline{\tau}\right)\leq\epsilon$,
then we say that $\overline{\sigma}$ and
$\overline{\tau}$ are \emph{$\epsilon$-close}.
\end{defn}

\begin{defn}
\label{def:stable-equations}For $E\subset\FF$,
we say that the system of equations $\left\{ w=1\right\} _{w\in E}$
is \emph{stable in permutations} (or \emph{P-stable} for short) if for every $\epsilon>0$ there are $\delta>0$
and a finite subset $E_{0}\subset E$, such that for every $n\in\NN$
and $\delta$-solution $\left(\sigma_{1},\dotsc,\sigma_{m}\right)\in\Sym\left(n\right)^{m}$
for $\left\{ w=1\right\} _{w\in E_{0}}$,
there is a solution $\left(\tau_{1},\dotsc,\tau_{m}\right)\in\Sym\left(n\right)^{m}$
for $\left\{ w=1\right\} _{w\in E}$, such that $\left(\sigma_{1},\dotsc,\sigma_{m}\right)$
and $\left(\tau_{1},\dotsc,\tau_{m}\right)$ are $\epsilon$-close.
\end{defn}

\begin{rem}
\label{rem:def-for-equations-is-generalization}
The notion of a ``stable system'', introduced in \cite{arzhantseva-paunescu},
is a special case of Definition \ref{def:stable-equations}
for a \emph{finite} $E\subset\FF$. Indeed,
Definition 3.2 of \cite{arzhantseva-paunescu} says that a finite
$E\subset\FF$ is a \emph{stable system} if for every $\epsilon>0$,
there is $\delta>0$, such that every $\delta$-solution for
$\left\{ w=1\right\} _{w\in E}$
is $\epsilon$-close to a solution for
$\left\{ w=1\right\} _{w\in E}$. This is indeed equivalent
to our Definition \ref{def:stable-equations} in light of
Remark \ref{rem:delta-solution-basics} below.
\end{rem}


\begin{rem}
\label{rem:delta-solution-basics}~
For $E_{1}\subset E_{2}\subset\FF$,
every solution for $\left\{ w=1\right\} _{w\in E_{2}}$ is a solution
for $\left\{ w=1\right\} _{w\in E_{1}}$.
Moreover, assuming that $E_{2}$ is finite, for $\delta>0$, every $\delta$-solution
for $\left\{ w=1\right\} _{w\in E_{2}}$ is a $\delta$-solution for
$\left\{ w=1\right\} _{w\in E_{1}}$.
If $w_{1},w_{2}\in\FF$, then every simultaneous solution for
$\left\{ w_{1}=1\right\} $ and $\left\{ w_{2}=1\right\} $ is a solution
for $\left\{ w_{1}\cdot w_{2}=1\right\} $.
If $t,w\in\FF$ , then every solution for $\left\{ w=1\right\} $
is a solution for $\left\{ t\cdot w\cdot t^{-1}=1\right\} $.
By the above, if $E\subset\FF$ and $n\in\NN$, then
a tuple $\left(\sigma_{1},\dotsc,\sigma_{m}\right)\in\Sym\left(n\right)^{m}$
is a solution for $\left\{ w=1\right\} _{w\in E}$ if and only
if it is a solution for $\left\{ w=1\right\} _{w\in\lla E\rra}$.
\end{rem}

\begin{lem}
\label{lem:normal-closure-delta-solution}Let $E\subset\FF$.
Take $\tilde{\delta}>0$ and a finite subset $\tilde{E}_{0}\subset\lla E\rra$.
Then, there are $\delta>0$ and a finite subset $E_{0}\subset E$
, such that every $\delta$-solution for $\left\{ w=1\right\} _{w\in E_{0}}$
is a $\tilde{\delta}$-solution for $\left\{ w=1\right\} _{w\in\tilde{E}_{0}}$.
\end{lem}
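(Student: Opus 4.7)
The plan is to reduce $\tilde{E}_0$ to a finite set $E_0\subset E$ by expanding each element of $\tilde{E}_0$ explicitly as a product of conjugates of elements of $E^{\pm 1}$, then to propagate the $\delta$-solution bound through this product using bi-invariance of $d_n$.

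More concretely, first I would use the definition of the normal closure: since $\tilde{E}_0\subset\lla E\rra$ is finite, for each $\tilde{w}\in\tilde{E}_0$ I can write
\[
\tilde{w}=\prod_{j=1}^{k_{\tilde{w}}} t_{\tilde{w},j}\,w_{\tilde{w},j}^{\epsilon_{\tilde{w},j}}\,t_{\tilde{w},j}^{-1},
\]
where $w_{\tilde{w},j}\in E$, $t_{\tilde{w},j}\in\FF$, and $\epsilon_{\tilde{w},j}\in\{+1,-1\}$. Then I define $E_0:=\{w_{\tilde{w},j}\mid \tilde{w}\in\tilde{E}_0,\ 1\le j\le k_{\tilde{w}}\}\subset E$, which is finite, and set $K:=\sum_{\tilde{w}\in\tilde{E}_0}k_{\tilde{w}}$.

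Next, given any $n\in\NN$ and any tuple $\overline{\sigma}=(\sigma_1,\dotsc,\sigma_m)\in\Sym(n)^m$, I would apply the substitution $s_i\mapsto\sigma_i$ to the product expression above. Writing $w(\overline{\sigma})$ for the image of $w\in\FF$ under this substitution, bi-invariance of $d_n$ yields $d_n(t\,\sigma\,t^{-1},1)=d_n(\sigma,1)$ and $d_n(\sigma^{-1},1)=d_n(\sigma,1)$, while the triangle inequality with bi-invariance gives $d_n(\sigma\tau,1)\le d_n(\sigma,1)+d_n(\tau,1)$. Applying these repeatedly to the displayed factorization of $\tilde w$ gives
\[
d_n(\tilde{w}(\overline{\sigma}),1)\le \sum_{j=1}^{k_{\tilde{w}}} d_n(w_{\tilde{w},j}(\overline{\sigma}),1),
\]
and summing over $\tilde{w}\in\tilde{E}_0$ shows that
\[
\sum_{\tilde{w}\in\tilde{E}_0} d_n(\tilde{w}(\overline{\sigma}),1)\le K\cdot\sum_{w\in E_0}d_n(w(\overline{\sigma}),1).
\]

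Finally, I would set $\delta:=\tilde{\delta}/K$. Then any $\delta$-solution $\overline{\sigma}$ for $\{w=1\}_{w\in E_0}$ satisfies $\sum_{w\in E_0}d_n(w(\overline{\sigma}),1)\le\delta$, so the displayed bound yields $\sum_{\tilde{w}\in\tilde{E}_0} d_n(\tilde{w}(\overline{\sigma}),1)\le K\delta=\tilde{\delta}$, as required. The argument is essentially bookkeeping; the only conceptual point is that defects of conjugates and products behave subadditively thanks to bi-invariance of the Hamming metric, which is why no genuine obstacle arises.
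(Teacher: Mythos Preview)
Your proof is correct and follows essentially the same approach as the paper: both expand each element of $\tilde{E}_0$ as a product of conjugates of elements of $E^{\pm1}$, take $E_0$ to be the finite set of $E$-elements appearing, set $\delta=\tilde{\delta}/K$ with $K=\sum_{\tilde w}k_{\tilde w}$, and bound the defect of each $\tilde w$ by the sum of defects of its factors. The only cosmetic difference is that the paper tracks non-fixed-point sets explicitly (sets $F_w$ and $P_w$) while you invoke bi-invariance and the triangle inequality for $d_n$ directly; these are the same computation in different notation.
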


\begin{proof}
For every $w\in\tilde{E}_{0}$, write
$w=\prod_{i=1}^{l_{w}}t_{w,i}\cdot q_{w,i}^{\epsilon_{w,i}}\cdot t_{w,i}^{-1}$
where $l_{w}\geq0$, $\left\{ q_{w,i}\right\} _{i=1}^{l_{w}}\subset E$,
$\left\{\epsilon_{w,i}\right\}_{i=1}^{l_w}\subset\left\{1,-1\right\}$
and $\left\{ t_{w,i}\right\} _{i=1}^{l_{w}}\subset\FF$. Let
$E_{0}=\left\{ q_{w,i}\mid w\in\tilde{E}_{0},1\leq i\leq l_{w}\right\} $
and $C=\sum_{w\in\tilde{E}_{0}}l_{w}$. Define $\delta=\frac{1}{C}\cdot\tilde{\delta}$.
Take $n\in\NN$ and a $\delta$-solution $\left(\sigma_{1},\dotsc,\sigma_{m}\right)\in\Sym\left(n\right)^{m}$
for $\left\{ w=1\right\} _{w\in E_{0}}$. For every $x\in\left[n\right]$
and $w\in\FF$, write $w\cdot x$ for $w\left(\sigma_{1},\dotsc,\sigma_{m}\right)\left(x\right)$.
For every $w\in E_{0}$, write $F_{w}=\left\{ x\in\left[n\right]\mid w\cdot x\ne x\right\} $.
Then, $\sum_{w\in E_{0}}\left|F_{w}\right|\leq\delta n$. A fortiori,
$\left|F_{w}\right|\leq\delta n$ for each $w\in E_{0}$. 

Let $w\in\tilde{E}_{0}$. Define $P_{w}=\cup_{i=1}^{l_{w}}t_{w,i}\cdot F_{q_{w,i}}$.
Then $\left|P_{w}\right|\leq\sum_{i=1}^{l_{w}}\left|F_{q_{w,i}}\right|\leq l_{w}\cdot\delta n$.
For $x\in\left[n\right]$ and $1\leq i\leq l_{w}$, if $t_{w,i}^{-1}\cdot x\notin F_{q_{w,i}}$,
\[
\left(t_{w,i}\cdot q_{w,i}\cdot t_{w,i}^{-1}\right)\cdot x=t_{w,i}\cdot q_{w,i}\cdot\left(t_{w,i}^{-1}\cdot x\right)=t_{w,i}\cdot\left(t_{w,i}^{-1}\cdot x\right)=x\text{ ,}
\]
and so if $x\notin P_{w}$,
\[
w\cdot x=\left(\prod_{i=1}^{l_{w}}t_{w,i}\cdot q_{w,i}\cdot t_{w,i}^{-1}\right)\cdot x=x
\]

Therefore, $d_{n}\left(w\left(\sigma_{1},\dotsc,\sigma_{m}\right),1\right)\leq\frac{1}{n}\cdot\left|P_{w}\right|\leq\frac{1}{n}\cdot l_{w}\cdot\delta n=l_{w}\cdot\delta$.
Finally, $\sum_{w\in\tilde{E}_{0}}d_{n}\left(w\left(\sigma_{1},\dotsc,\sigma_{m}\right),1\right)\leq C\cdot\delta=\tilde{\delta}$.
In other words, $\left(\sigma_{1},\dotsc,\sigma_{m}\right)$ is a
$\tilde{\delta}$-solution for $\left\{ w=1\right\} _{w\in\tilde{E}_{0}}$.
\end{proof}
\begin{lem}
\label{lem:normal-closure-equivalence}Let $E\subset\FF$.
Then, $\left\{ w=1\right\} _{w\in E}$ is P-stable if and
only if $\left\{ w=1\right\} _{w\in\lla E\rra}$
is P-stable.
\end{lem}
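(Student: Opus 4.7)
The plan is to reduce both implications to \textbf{Lemma \ref{lem:normal-closure-delta-solution}} together with the observation (from Remark \ref{rem:delta-solution-basics}) that a tuple is an (exact) solution for $\{w=1\}_{w\in E}$ if and only if it is an exact solution for $\{w=1\}_{w\in\lla E\rra\}}$.

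\textbf{The direction ``$\Leftarrow$'' ($\lla E\rra$ P-stable $\Rightarrow$ $E$ P-stable).} Fix $\epsilon>0$. Using P-stability of the (a priori larger) system $\{w=1\}_{w\in\lla E\rra\}}$, choose $\tilde\delta>0$ and a finite subset $\tilde E_0\subset\lla E\rra$ such that every $\tilde\delta$-solution for $\{w=1\}_{w\in\tilde E_0}$ is $\epsilon$-close to an exact solution for $\{w=1\}_{w\in\lla E\rra\}}$. Apply Lemma \ref{lem:normal-closure-delta-solution} to $(\tilde\delta,\tilde E_0)$ to produce $\delta>0$ and a finite $E_0\subset E$ such that every $\delta$-solution for $\{w=1\}_{w\in E_0}$ is a $\tilde\delta$-solution for $\{w=1\}_{w\in\tilde E_0}$. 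Composing, every $\delta$-solution for $\{w=1\}_{w\in E_0}$ is $\epsilon$-close to an exact solution for $\{w=1\}_{w\in\lla E\rra\}}$, which by Remark \ref{rem:delta-solution-basics} is the same as an exact solution for $\{w=1\}_{w\in E}$.

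\textbf{The direction ``$\Rightarrow$'' ($E$ P-stable $\Rightarrow$ $\lla E\rra$ P-stable).} This is the easy direction because it does not require Lemma \ref{lem:normal-closure-delta-solution}. Given $\epsilon>0$, use P-stability of $\{w=1\}_{w\in E}$ to obtain $\delta>0$ and a finite subset $E_0\subset E$ such that every $\delta$-solution for $\{w=1\}_{w\in E_0}$ is $\epsilon$-close to an exact solution for $\{w=1\}_{w\in E}$. Since $E_0\subset E\subset\lla E\rra$, the same $E_0$ is also a finite subset of $\lla E\rra$, and (again by Remark \ref{rem:delta-solution-basics}) the exact solutions for $\{w=1\}_{w\in E}$ and $\{w=1\}_{w\in\lla E\rra\}}$ coincide. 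Hence $(\delta, E_0)$ witnesses the P-stability of $\{w=1\}_{w\in\lla E\rra\}}$ for the given $\epsilon$.

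\textbf{Main obstacle.} There is no substantive obstacle left: the real work sits inside Lemma \ref{lem:normal-closure-delta-solution}, which has just been proved, so the present lemma is a short packaging argument. The only small point to be careful about is that the definition of P-stability requires the witness $E_0$ to be a subset of the original equation set, which is exactly why we cannot simply ``declare'' $\tilde E_0$ in the $\Leftarrow$ direction and instead must invoke Lemma \ref{lem:normal-closure-delta-solution} to pull it back inside $E$.
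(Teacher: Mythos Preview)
Your proof is correct and follows essentially the same approach as the paper: the forward direction uses that $E_0\subset E\subset\lla E\rra$ together with Remark \ref{rem:delta-solution-basics}, while the reverse direction invokes Lemma \ref{lem:normal-closure-delta-solution} to pull the finite witness back into $E$. The paper's proof is terser (it only writes out the forward direction and dispatches the reverse with ``follows similarly using Lemma \ref{lem:normal-closure-delta-solution}''), but the content is identical.
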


\begin{proof}
Assume that $\left\{ w=1\right\} _{w\in E}$ is P-stable. Let $\epsilon>0$.
Then, there is $\delta>0$ and a finite subset $E_0\subset E$ (and so $E_0\subset \lla E\rra$) such that every $\delta$-solution
for $\{w=1\}_{w\in E_{0}}$ is $\epsilon$-close to a solution for $\{w=1\}_{w\in E}$. The latter is a solution for $\{w=1\}_{w\in\lla E\rra}$ as well by Remark \ref{rem:delta-solution-basics}, and so $\left\{ w=1\right\} _{w\in \lla E\rra}$ is P-stable.

The reverse implication follows similarly using Lemma \ref{lem:normal-closure-delta-solution}.
\end{proof}

\subsection{P-stable groups}

Each $\Gamma$-set $X$ is naturally an $\FF$-set. Conversely,
for an $\FF$-set $X$, if the structure homomorphism $\rho\colon\FF\rightarrow\Sym\left(X\right)$
factors through $\Gamma$ by $\pi\colon\FF\twoheadrightarrow\Gamma$, then $X$ is naturally a $\Gamma$-set.
This condition is equivalent to the following: for every $w\in\FF$
and $x\in X$, if $\pi\left(w\right)=1_{\Gamma}$, then $w\cdot x=x$.
\begin{defn}
For $\delta>0$ and a finite subset $E_{0}\subset\Ker\pi$,
a finite $\FF$-set $X$ is a \emph{$\left(\delta,E_{0}\right)$-almost-$\Gamma$-set}
if $\sum_{w\in E_{0}}\Pr_{x\in X}\left(w\cdot x\neq x\right)\leq\delta$
(where $X$ is endowed with the uniform distribution).
\end{defn}

\begin{defn}
\label{def:gen-metric}
Let $X$ and $Y$ be finite $\FF$-sets of the same cardinality. For a bijection $f\colon X\rightarrow Y$,
define 
\[
\|f\|_{\gen}=\frac{1}{\left|S\right|}\cdot
\sum_{s\in S} \Pr_{x\in X}\left(f\left(s\cdot x\right)\neq s\cdot f\left(x\right)\right) \text{.}
\]
Finally, 
\[
d_{\gen}\left(X,Y\right)=\min\left\{ \|f\|_{\gen}\mid\text{\ensuremath{f\colon X\rightarrow Y} is a bijection}\right\} 
\text{.}\]
We refer to $d_{\gen}$ as the \emph{generator-metric}.
\end{defn}
Definition $\ref{def:gen-metric}$ will be generalized by
Definition $\ref{def:gen-metric-probspace}$.

For $n\in\NN$ and a tuple
$\overline{\sigma}=\left(\sigma_{1},\dotsc,\sigma_{m}\right)\in\Sym\left(n\right)^{m}$,
write $\FF\left(\overline{\sigma}\right)$ for
the $\FF$-set whose point set is $\left[n\right]$, with the
action given by $s_{i}\cdot x=\sigma_{i}\left(x\right)$ for each
$1\leq i\leq m$.
Note that for
$\overline{\sigma},
\overline{\tau}
\in\Sym\left(n\right)^{m}$, 
$\|\id\|_{\gen}=d_n\left(\overline{\sigma},
\overline{\tau}\right)$,
where $\id\colon\FF\left(\overline{\sigma}\right)
\rightarrow \FF\left(\overline{\tau}\right)$
is the identity map $\left[n\right]\rightarrow\left[n\right]$.

In Definition \ref{def:stable-equations}, we generalized (see Remark \ref{rem:def-for-equations-is-generalization}) the notion of a P-stable system 
of equations from finite systems (as studied in \cite{glebsky-rivera} and \cite{arzhantseva-paunescu}), to possibly infinite systems. Analogously, we now generalize the notion of P-stable groups, studied in the aforementioned papers for finitely-presented groups and coinciding with Definition \ref{def:intro-stable} in the introduction, to finitely-generated groups.
\begin{defn}
\label{def:stable-group}The group $\Gamma$ is \emph{stable in permutations} 
(or \emph{P-stable} for short) 
if for every $\epsilon>0$ there are $\delta>0$ and a finite subset
$E_{0}\subset\Ker\pi$, such that for every finite $\FF$-set
$X$, if $X$ is a $\left(\delta,E_{0}\right)$-almost-$\Gamma$-set, then
there is a $\Gamma$-set $Y$ such that $\left|X\right|=\left|Y\right|$
and $d_{\gen}\left(X,Y\right)\leq\epsilon$.
\end{defn}

\begin{lem}
The group $\Gamma$ is P-stable if and only if the system of equations
$\left\{ w=1\right\} _{w\in\Ker\pi}$ is P-stable.
\end{lem}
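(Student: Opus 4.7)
My plan is to show that the two notions are essentially the same after the standard dictionary between tuples of permutations and $\FF$-sets. Concretely, for a tuple $\overline{\sigma}=(\sigma_1,\dotsc,\sigma_m)\in\Sym(n)^m$, the $\FF$-set $\FF(\overline{\sigma})$ with underlying set $[n]$ satisfies
\[
\Pr_{x\in[n]}\left(w\cdot x\ne x\right)=d_n\left(w(\overline{\sigma}),1\right)
\]
for every $w\in\FF$, so $\FF(\overline{\sigma})$ is a $(\delta,E_0)$-almost-$\Gamma$-set if and only if $\overline{\sigma}$ is a $\delta$-solution for $\{w=1\}_{w\in E_0}$. Moreover, $\FF(\overline{\sigma})$ is a genuine $\Gamma$-set iff $w(\overline{\sigma})=1$ for every $w\in\Ker\pi$, i.e.\ iff $\overline{\sigma}$ is a solution for $\{w=1\}_{w\in\Ker\pi}$. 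Hence the hypothesis classes on the two sides of the biconditional match perfectly; only the metric on the conclusion side needs care.

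The key quantitative point is the relation between $d_{\gen}$ and $d_n$. For a bijection $f\colon X\to Y$ between two finite $\FF$-sets of cardinality $n$, let $\overline{\tau}^f\in\Sym(n)^m$ be the pull-back tuple obtained by transporting the $\FF$-action on $Y$ along $f$; then, for each $s_i\in S$,
\[
d_n\bigl(\sigma_i^X,\tau_i^f\bigr)=\Pr_{x\in X}\bigl(f(s_i\cdot x)\ne s_i\cdot f(x)\bigr),
\]
which summed over $i=1,\dotsc,m$ gives $d_n(\overline{\sigma}^X,\overline{\tau}^f)=m\cdot\|f\|_{\gen}$. Moreover, because pulling back an action along a bijection is an $\FF$-set isomorphism, $\overline{\tau}^f$ is a solution for $\{w=1\}_{w\in\Ker\pi}$ iff $Y$ is a $\Gamma$-set, and conversely any $\Gamma$-set $Y$ of size $n$ arises this way. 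So, up to the constant factor $m=|S|$, the two ``distance to a solution'' quantities coincide.

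Given this dictionary, both implications reduce to rescaling $\epsilon$ by a factor of $m$. For the $\Rightarrow$ direction, given $\epsilon>0$, I would apply the hypothesis that $\Gamma$ is P-stable with tolerance $\epsilon/m$ to obtain $\delta>0$ and finite $E_0\subset\Ker\pi$; then any $\delta$-solution $\overline{\sigma}$ of $\{w=1\}_{w\in E_0}$ yields, via $X=\FF(\overline{\sigma})$, a $(\delta,E_0)$-almost-$\Gamma$-set, hence a $\Gamma$-set $Y$ and a bijection $f$ with $\|f\|_{\gen}\le\epsilon/m$; the pull-back $\overline{\tau}^f$ is then the required solution for $\{w=1\}_{w\in\Ker\pi}$ at $d_n$-distance at most $\epsilon$. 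For the $\Leftarrow$ direction, given $\epsilon>0$, I would instead invoke P-stability of $\{w=1\}_{w\in\Ker\pi}$ with tolerance $m\epsilon$, take the resulting $\delta,E_0$, convert an almost-$\Gamma$-set $X$ of size $n$ to a $\delta$-solution $\overline{\sigma}$, produce a nearby honest solution $\overline{\tau}$ at $d_n$-distance $\le m\epsilon$, and set $Y=\FF(\overline{\tau})$; then the identity map $[n]\to[n]$ realizes $d_{\gen}(X,Y)\le\epsilon$.

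There is no real obstacle here beyond careful bookkeeping; the only thing to verify is the identity $d_n(\overline{\sigma}^X,\overline{\tau}^f)=m\cdot\|f\|_{\gen}$ together with the observation that a pull-back along a bijection preserves the property of being a solution for $\Ker\pi$. Everything else is a direct translation, and the finite set $E_0\subset\Ker\pi$ can be taken to be the same on both sides of the equivalence.
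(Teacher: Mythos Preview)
Your proposal is correct and follows essentially the same approach as the paper's proof: both set up the dictionary between tuples $\overline{\sigma}\in\Sym(n)^m$ and finite $\FF$-sets $\FF(\overline{\sigma})$, identify $\delta$-solutions with $(\delta,E_0)$-almost-$\Gamma$-sets, and pull back a nearby $\Gamma$-set along a bijection to obtain a nearby solution (and vice versa). You are in fact more careful than the paper about the factor $m=|S|$ coming from the normalization in $\|f\|_{\gen}$, which the paper's proof glosses over.
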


\begin{proof}
Assume that $\Gamma$ is P-stable. Then, for $\epsilon>0$, there
are $\delta>0$ and $E_{0}\subset\Ker\pi$ satisfying the condition
in Definition \ref{def:stable-group}. Let $\overline{\sigma}\in\Sym\left(n\right)^{m}$
be a $\delta$-solution for $\left\{ w=1\right\} _{w\in E_{0}}$.
Let $X=\FF\left(\overline{\sigma}\right)$. Then,
$X$ is a $\left(\delta,E_{0}\right)$-almost-$\Gamma$-set. Therefore,
there is a $\Gamma$-set $Y$ and a bijection $f\colon X\rightarrow Y$ satisfying
$\|f\|_{\gen}\leq\epsilon$. Define a tuple
$\overline{\tau}=\left(\tau_{1},\dotsc,\tau_{m}\right)\in\Sym\left(n\right)^{m}$
by $\tau_{i}\left(x\right)=f^{-1}\left(s_{i}\cdot f\left(x\right)\right)$.
Then, $\overline{\tau}$ is a solution for $E$
and it is $\epsilon$-close to $\overline{\sigma}$.

In the other direction, assume that $\left\{ w=1\right\} _{w\in\Ker\pi}$
is P-stable. Then, for every $\epsilon>0$, there are $\delta>0$
and $E_{0}\subset\Ker\pi$ satisfying the condition in Definition
\ref{def:stable-equations}. Let $X$ be a $\left(\delta,E_{0}\right)$-almost-$\Gamma$-set.
Denote $\left|X\right|=n$, take an arbitrary bijection $f\colon\left[n\right]\rightarrow X$,
and define a tuple $\overline{\sigma}=\left(\sigma_{1},\dotsc,\sigma_{m}\right)\in\Sym\left(n\right)^{m}$
by $\sigma_{i}\left(x\right)=f^{-1}\left(s_{i}\cdot f\left(x\right)\right)$.
Then, $\overline{\sigma}$ is a $\delta$-solution
for $E_{0}$. Therefore, there is a solution $\overline{\tau}\in\Sym\left(n\right)^m$
for $\Ker\pi$ which is $\epsilon$-close to $\overline{\sigma}$.
Let $Y=\FF\left(\overline{\tau}\right)$. Consider
$f$ as a function from $Y$ to $X$. Then, $\|f\|_{\gen}\leq\epsilon$, and
so $d_{\gen}\left(X,Y\right)\leq\epsilon$.
\end{proof}

\begin{rem}
Definition \ref{def:stable-group} introduces the notion
of a P-stable group $\Gamma$ using a given presentation of $\Gamma$
as a quotient of a finitely-generated free group. Nevertheless, the
definition depends only on $\Gamma$ as an abstract group. Indeed, consider
two finitely-generated free groups $\FF_{S}$ and $\FF_{T}$
with bases $S$ and $T$, respectively. Denote the generator-metrics on finite
$\FF_{S}$-sets and on finite $\FF_{T}$-set by $d_{\gen}^{S}$
and $d_{\gen}^{T}$, respectively. Present $\Gamma$ in two ways: $\pi_{S}\colon\FF_{S}\twoheadrightarrow \Gamma$
and $\pi_{T}\colon\FF_{T}\twoheadrightarrow\Gamma$. For every $t\in T$, let
$v_{t}\in\FF_{S}$ be a word for which $\pi_{S}\left(v_{t}\right)=\pi_{T}\left(t\right)$.
Define a homomorphism $\alpha\colon\FF_{T}\rightarrow\FF_{S}$
by extending the law $\alpha\left(t\right)=v_{t}$. Then, every $\FF_{S}$-set
is naturally an $\FF_{T}$-set. There is a constant $C>0$
such that for every pair $X$ and $Y$ of finite $\FF_{S}$-sets
of the same cardinality, $d_{\gen}^{T}\left(X,Y\right)\leq C\cdot d_{\gen}^{S}\left(X,Y\right)$.
Moreover, for $E_{0}\subset\Ker\pi_{T}$, $\delta>0$ and an
$\FF_{S}$-set $X$, if $X$ is a
$\left(\delta,\alpha\left(E_{0}\right)\right)$-almost-$\Gamma$-set,
then as an $\FF_{T}$-set it is a
$\left(\delta,E_{0}\right)$-almost-$\Gamma$-set.
Running the same arguments with $S$ and
$T$ reversed, we see that $\Gamma$ is P-stable with respect to
$\pi_{S}$ if and only if it is P-stable with respect to $\pi_{T}$.
More concisely, we have shown that the metrics $d_{\gen}^{S}$ and $d_{\gen}^{T}$
are bi-Lipschitz equivalent and that the notions of almost-$\Gamma$-sets with respect
to $\FF_S$ and to $\FF_T$ are essentially equivalent.
\end{rem}

\section{Invariant random subgroups}
\label{sec:IRS}

We recall the notion of an \emph{invariant random subgroup} (a.k.a. IRS, see \cite{agv, gelander-notes,gelander-icm}).
Write $2^{\Gamma}$ for the set of functions $f\colon\Gamma\rightarrow\left\{0,1\right\}$, and identify $2^{\Gamma}$ with the power set of $\Gamma$ by associating each function $f\colon\Gamma\rightarrow\left\{0,1\right\}$ with the set $\left\{ g \in \Gamma \mid f \left( g \right) = 1 \right\}$.
Denote the set of subgroups of $\Gamma$ by $\Sub(\Gamma)$.
Endow $\Sub(\Gamma)$ with the Chabauty topology, which,
for discrete groups, is just the subspace topology induced from the
product topology on $2 ^{\Gamma}$ and the inclusion
$\Sub(\Gamma)\subset2 ^{\Gamma}$. A sequence
$\left(U_{n}\right)_{n=1}^{\infty}$ in $2 ^{\Gamma}$ converges
if and only if for every $g\in \Gamma$, either $g\in U_{n}$ for all large
enough $n$, or $g\notin U_{n}$ for all large enough $n$. In this
case, $\limsup U_{n}=\liminf U_{n}=U$, where $U$ is the limit of
of the sequence. This also shows that $\Sub(\Gamma)$
is a closed subspace of $2 ^{\Gamma}$, and so it is
compact.
The group $\Gamma$ acts on $\Sub(\Gamma)$ continuously by conjugation.
Write $\Sub_{\fg}(\Gamma)$,
$\Sub_{\findex}(\Gamma)$ and
$\Sub_{\almostn}(\Gamma)$ for the subspaces of
$\Sub(\Gamma)$ of finitely-generated subgroups, finite-index subgroups and almost-normal subgroups,
respectively.

For an element $w\in \Gamma$, define $C_{w}=\left\{ H\leq \Gamma\mid w\in H\right\} $.
For an integer $r\geq0$ and a subset $W\subset \Gamma$, let $C_{r,W}=\left\{ H\leq \Gamma\mid H\cap B_{\Gamma}\left(r\right)=W\cap B_{\Gamma}\left(r\right)\right\} $.
Note that such sets $C_{w}$ and $C_{r,W}$ are clopen in $\Sub(\Gamma)$,
and so their characteristic functions are continuous. For a given
subgroup $K\leq \Gamma$, the subspace $\left\{ H\leq \Gamma\mid K\leq H\right\} $
of $\Sub(\Gamma)$ is closed since it equals $\cap_{k\in K}C_{k}$.

We exhibit a metric generating the topology of $\Sub(\Gamma)$.
Fix an enumeration $\left(g_{i}\right)_{i=1}^{\infty}$ of the elements
of $\Gamma$. The metric on $2 ^{\Gamma}$ defined by $d_{\prodname}\left(U_{1},U_{2}\right)=\sum_{i=1}^{\infty}2^{-i}\cdot{\bf 1}_{U_{1}\cap\left\{ g_{i}\right\} =U_{2}\cap\left\{ g_{i}\right\} }$
induces the product topology on $2 ^{\Gamma}$, and so
its restriction to $\Sub(\Gamma)$ induces the Chabauty topology.
Note that for every $\epsilon>0$, there is an integer $r\geq 1$,
such that for all $H_1,H_2\leq\Gamma$, if
$H_1\cap B_{\Gamma}\left(r\right)=H_2\cap B_{\Gamma}\left(r\right)$,
then $d_{\prodname}\left(H_1,H_2\right)<\epsilon$.

Consider the space $\Prob\left(\Sub(\Gamma)\right)$
of Borel regular probability measures on $\Sub(\Gamma)$.
We shall refer to elements of $\Prob\left(\Sub(\Gamma)\right)$ as
\emph{random-subgroups}.
The group $\Gamma$ acts on $\Prob\left(\Sub(\Gamma)\right)$
by conjugation, i.e. $\left(g\cdot\mu\right)\left(A\right)=\mu\left(g^{-1}Ag\right)$.
We write $\IRS(\Gamma)$ for the subspace of
$\Prob\left(\Sub(\Gamma)\right)$
of conjugation-invariant random subgroups, namely $\IRS(\Gamma)=
\Prob\left(\Sub(\Gamma)\right)^{\Gamma}$.
We shall refer to elements of $\IRS(\Gamma)$
as \emph{invariant random-subgroups}, or \emph{IRSs}.
Endow $\IRS(\Gamma)$ with the weak-$*$ topology. A sequence
$\left(\mu_{n}\right)_{n=1}^{\infty}$ in $\IRS(\Gamma)$ converges
in the weak-$*$ topology to $\mu\in\IRS(\Gamma)$ if and only
if $\int fd\mu_{n}\rightarrow\int fd\mu$ for every continuous function
$f\colon\Sub(\Gamma)\rightarrow\RR$. It follows from the Riesz-Markov
and Banach-Alaoglu theorems that $\IRS(\Gamma)$ is a compact space.
Moreover, under the weak-$*$ topologies, $\IRS(\Gamma)$ is metrizable
by the L\'{e}vy-Prokhorov metric.
We shall only use the metrizability of $\IRS(\Gamma)$
to identify the closure of a given subset $A$ of $\IRS(\Gamma)$
with the set of limits of convergent sequences (rather than nets) in $A$.

The space $\Sub(\Gamma)$ enjoys a useful sequence
$\calP_n(\Gamma)$
of partitions into finitely many clopen sets.
For $n\in\NN$, define the partition
\[
\calP_n=\calP_n(\Gamma)=\left\{C_{n,W}\mid
W\subset B_{\Gamma}\left(n\right),
C_{n,W}\neq\emptyset\right\}
\text{ .}
\]
For a continuous function
$f\colon\Sub(\Gamma)\rightarrow\RR$,
define sequence of continuous functions
$f_n\colon\Sub(\Gamma)\rightarrow\RR$
by $f_n=\sum_{A\in\calP_{n}}f\left(K_A\right)\cdot{\bf 1}_{A}$,
where $K_A$ is an arbitrary element of $A$ for each $A\in\calP_n$.
Then, since $f$ is uniformly continuous,
$\left\|f_n-f\right\|_{\infty}\rightarrow 0$.
This shows that for $\mu\in\IRS(\Gamma)$ and a
sequence $\left(\mu_n\right)_{n=1}^{\infty}$ in
$\IRS(\Gamma)$, $\mu_n\rightarrow\mu$
in the weak-$*$
topology if and only if for every integer $r\geq 1$ and
$W\subset B_{\Gamma}\left(r\right)$,
$\mu_{n}\left(C_{r,W}\right)
\overset{n\rightarrow\infty}{\longrightarrow}
\mu\left(C_{r,W}\right)$.
%

Let $\mu\in\IRS(\Gamma)$ be an atomic IRS.
Then, all atoms of $\mu$ must be almost-normal subgroups of $\Gamma$.
Fix $n\in\NN$. Take pairwise non-conjugate subgroups
$H_{1},\dots,H_{k}$ of $\Gamma$
such that $M>1-\frac{1}{n}$, where $M=\sum_{i=1}^{k}m_{i}$ and $m_{i}=\mu\left(H_{i}^{\Gamma}\right)$.
Write $A=\cup_{i=1}^{k}H_i^{\Gamma}$.
Let $\mu_{n}\in\IRS(\Gamma)$
be the atomic IRS assigning measure
$\frac{m_i}{M\cdot\left|H_i^{\Gamma}\right|}$ to each conjugate of $H_i$
for every $1\leq i\leq k$.
Take a continuous function $f\colon\Sub(\Gamma)\rightarrow\RR$.
Then,
\begin{align*}
\left|\int fd\mu_{n}-\int fd\mu\right|\le & \left|\int_{A}fd\mu_{n}-\int_{A}fd\mu\right|+\\
 & \left|\int_{\Sub(\Gamma)\setminus A}fd\mu_{n}-\int_{\Sub(\Gamma)\setminus A}fd\mu\right|\\
\leq & \sum_{i=1}^{k}\frac{1}{\left|H_i^{\Gamma}\right|}\cdot
\sum_{K\in H_i^{\Gamma}}
\left(\frac{m_{i}}{M}-m_{i}\right)\cdot f\left(K\right)+\\
& \left|0-\int_{\Sub(\Gamma)\setminus A}fd\mu\right|\\
\leq & \left(\frac{1}{M}-1\right)\cdot\left(\sum_{i=1}^{k}m_{i}\right)
\cdot\|f\|_{\infty}
+\frac{1}{n}\cdot\|f\|_{\infty}\\
= & \left(\left(1-M\right)+\frac{1}{n}\right)\cdot\|f\|_{\infty}\\
\leq & \frac{2}{n}\cdot\|f\|_{\infty}
\end{align*}
and so $\mu_{n}\rightarrow\mu$.
We have thus shown that every atomic IRS $\mu\in\IRS(\Gamma)$
is the limit of a sequence
$\left(\mu_n\right)_{n=1}^{\infty}$ of finitely-supported atomic IRSs with
$\supp\left(\mu_n\right)\subset\supp\left(\mu\right)$.

Recall that a standard Borel space is a measurable space which is isomorphic,
as a measurable space, to a compact metric space with its Borel $\sigma$-algebra.
\begin{defn}
A \emph{probability space} is a standard Borel space endowed with a Borel regular
probability measure.
A \emph{$\Gamma$-probability-space} $X$ is a probability space endowed with
a Borel action $\Gamma\curvearrowright X$.
If the action is probability measure preserving (p.m.p.), we say for short that
that $X$ is a \emph{p.m.p. $\Gamma$-space}.
\end{defn}

Let $\left(X,\nu\right)$ be a $\Gamma$-probability-space.
Then, the stabilizer map $f\colon X\rightarrow\Sub(\Gamma)$
defined by $f\left(x\right)=\Stab_{\Gamma}\left(x\right)$ is a Borel map,
and so we may define the pushforward measure
$\mu=f_{*}\nu\in\Prob\left(\Sub(\Gamma)\right)$.
By definition, $\mu\left(A\right)=\nu\left(f^{-1}\left(A\right)\right)$
for every Borel set $A\subset\Sub(\Gamma)$.
If $X$ is a p.m.p. $\Gamma$-space, then $\mu\in\IRS(\Gamma)$,
and we refer to $\mu$ as the
\emph{IRS associated with $X$}.

For a sequence $\left(X_{n}\right)_{n=1}^{\infty}$ of 
p.m.p. $\Gamma$-spaces
with associated sequence of IRSs $\left(\mu_{n}\right)_{n=1}^{\infty}$,
if $\mu_{n}\rightarrow\mu$, then we say that $\mu$ is the \emph{limiting
IRS} of $\left(X_{n}\right)_{n=1}^{\infty}$.

We shall consider both $\IRS(\Gamma)$ and $\IRS(\FF)$.
The discussion above applies to $\IRS(\FF)$ as a special case.
We identify $\IRS(\Gamma)$ with the subspace of $\IRS(\FF)$
of measures supported on subgroups which contain $\Ker\pi$. With
this identification, $\IRS(\Gamma)$
is a closed subspace of $\IRS(\FF)$.

\begin{defn}
A random-subgroup
$\mu\in
\Prob\left(\Sub(\Gamma)\right)$ is a
\emph{finite-index random-subgroup} if it is atomic and all of
its atoms are finite-index subgroups of $\Gamma$.
Write $\IRS_{\findex}(\Gamma)$
for the subspace of $\IRS(\Gamma)$
consisting of the finite-index IRSs.
\end{defn}
\begin{defn}
An IRS $\mu\in\IRS(\Gamma)$
is co-sofic if it is the weak-$*$ limit of a sequence 
$\left(\mu_{n}\right)_{n=1}^{\infty}$
in $\IRS(\Gamma)$ 
of finite-index IRSs.
\end{defn}
By the discussion above regarding approximation of
atomic IRSs by finitely-supported atomic IRSs,
and since $\IRS(\Gamma)$ is metrizable,
an IRS $\mu\in\IRS(\Gamma)$
is co-sofic if and only if it is the limit of a sequence of
\emph{finitely-supported} finite-index IRSs.

\begin{lem}
\label{lem:approximation-of-cosofic-by-actions}Let $\mu\in\IRS(\Gamma)$
be a co-sofic IRS. Then, there is a sequence $\left(X_{n}\right)_{n=1}^{\infty}$
of finite $\Gamma$-sets whose associated sequence of IRSs $\left(\mu_{n}\right)_{n=1}^{\infty}$
converges to $\mu$.
\end{lem}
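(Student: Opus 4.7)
\bigskip

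\noindent\textbf{Proof plan.}

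The starting observation is that a coset space provides an atomic IRS exactly. Concretely, for a finite-index subgroup $H\leq\Gamma$, the finite $\Gamma$-set $\Gamma/H$ has the property that $\Stab(gH)=gHg^{-1}$, and the number of cosets $gH$ whose stabilizer equals a given conjugate $H'=g_0Hg_0^{-1}$ is exactly $[N_\Gamma(H):H]$. Consequently the IRS associated to $\Gamma/H$ is the uniform probability measure $U_H$ on the conjugacy class $H^\Gamma$. Taking $k$ disjoint copies of $\Gamma/H$ produces the same associated IRS $U_H$ (reweighting is invariant under scaling the $\Gamma$-set).

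By the paragraph immediately preceding the lemma, since every atomic IRS is approximable in the weak-$*$ topology by finitely-supported atomic IRSs supported on the same atoms, the hypothesis that $\mu$ is co-sofic means we may write $\mu=\lim_{n\to\infty}\mu_n$, where each
\[
\mu_n \;=\; \sum_{i=1}^{k_n} p_{n,i}\,U_{H_{n,i}}
\]
is a \emph{finitely-supported} finite-index IRS, with positive weights $p_{n,i}$ summing to $1$ and with each $H_{n,i}\leq\Gamma$ of finite index. It therefore suffices, by a standard diagonal / triangle-inequality argument, to produce, for every $\varepsilon>0$ and every such $\mu_n$, a finite $\Gamma$-set $X$ whose associated IRS $\nu$ satisfies $d_{\mathrm{LP}}(\nu,\mu_n)<\varepsilon$.

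For this, I will take $X$ to be a weighted disjoint union
\[
X \;=\; \coprod_{i=1}^{k_n} \left(\Gamma/H_{n,i}\right)^{\coprod a_{i}}
\]
with positive integer multiplicities $a_i\in\mathbb{N}$. By the first paragraph, the associated IRS is
\[
\nu \;=\; \sum_{i=1}^{k_n} \frac{a_i\,[\Gamma:H_{n,i}]}{|X|}\, U_{H_{n,i}}.
\]
So it remains to choose the $a_i$ so that the coefficient vector $\bigl(\tfrac{a_i[\Gamma:H_{n,i}]}{|X|}\bigr)_i$ is $\varepsilon$-close to $(p_{n,i})_i$. Picking a large integer $N$ and setting $a_i=\lfloor N p_{n,i}/[\Gamma:H_{n,i}]\rfloor$ (adjusted to be $\geq 1$), one has $a_i\,[\Gamma:H_{n,i}]=Np_{n,i}+O(1)$ and $|X|=N+O(1)$, so the coefficients converge to $p_{n,i}$ as $N\to\infty$. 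Since $\nu$ and $\mu_n$ are convex combinations of the same finitely many measures $U_{H_{n,i}}$, closeness of coefficients yields closeness in the weak-$*$ topology (using the characterization via the partitions $\mathcal P_r$ recalled earlier).

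The main (mild) obstacle is that the atomic weights $p_{n,i}$ appearing in $\mu_n$ need not be rational, so one cannot hope to realize $\mu_n$ exactly by a finite $\Gamma$-set; but the two-level approximation $\mu_n\to\mu$ together with integer-lattice approximation of $(p_{n,i})_i$ handles this cleanly. Replacing $\mu_n$ by such a $\nu_n$ and extracting a diagonal sequence gives the desired sequence $(X_n)$ of finite $\Gamma$-sets with $\nu_n\to\mu$.
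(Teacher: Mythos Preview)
Your proof is correct and follows essentially the same approach as the paper: reduce to finitely-supported finite-index IRSs $\mu_n$, then realize each $\mu_n$ approximately by a disjoint union $\coprod_i(\Gamma/H_{n,i})^{\coprod a_i}$ with integer multiplicities chosen to match the weights, and conclude by a triangle-inequality/diagonal argument. Your computation of the associated IRS is in fact a bit more careful than the paper's, as you correctly track the factor $[\Gamma:H_{n,i}]$ when passing from the multiplicities $a_i$ to the coefficient of $U_{H_{n,i}}$.
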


\begin{proof}
Let $\left(\mu_{n}\right)_{n=1}^{\infty}$ be a sequence in $\IRS(\Gamma)$
of finitely-supported finite-index IRSs converging to $\mu$.
Fix $n\in\NN$. Take pairwise
non-conjugate subgroups $H_{1},\dotsc,H_{k}$ such that $\mu_{n}$ is
supported on $\cup_{i=1}^{k}H_{i}^{\Gamma}$. Take positive integers $l_{1},\dotsc,l_{k}$
satisfying $\left|\frac{l_{i}}{S}-m_{i}\right|<\frac{1}{kn}$, where
$S=\sum_{i=1}^{k}l_{i}$ and $m_{i}=\mu\left(H_{i}^{\Gamma}\right)$. Let
$X_{n}=\coprod_{i=1}^{k}\left(\Gamma/H_{i}\right)^{\coprod l_{i}}$ and
write $\nu_{n}\in\IRS(\Gamma)$ for the IRS associated with
$X_{n}$. Take a continuous function $f\colon\Sub(\Gamma)\rightarrow\RR$.
Then,
\begin{align*}
\left|\int fd\nu_{n}-\int fd\mu_{n}\right|= & \left|\sum_{i=1}^{k}\left(\frac{l_{i}}{S}-m_{i}\right)\cdot\left(\frac{1}{\left|H_{i}^{\Gamma}\right|}\cdot\sum_{K\in H_{i}^{\Gamma}}f\left(K\right)\right)\right|\\
\leq & \sum_{i=1}^{k}\frac{1}{kn}\cdot\|f\|_{\infty}\\
= & \frac{1}{n}\cdot\|f\|_{\infty}
\end{align*}
and so $\nu_{n}\rightarrow\mu$.
\end{proof}
\begin{lem}
\label{lem:co-sofic-by-support}
Let $\mu\in\IRS(\Gamma)$
be a co-sofic IRS.
Then, $\supp\left(\mu\right)\subset \overline{\Sub_{\findex}(\Gamma)}$.
\end{lem}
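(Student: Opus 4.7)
The plan is to translate ``$H \in \supp(\mu)$'' into ``every basic clopen neighborhood of $H$ has positive $\mu$-mass,'' transfer that positivity to the approximating $\mu_n$, and harvest a finite-index subgroup from each neighborhood.

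More precisely, fix $H \in \supp(\mu)$. To show $H \in \overline{\Sub_{\findex}(\Gamma)}$, it suffices, by the description of the Chabauty topology, to produce for every integer $r \geq 1$ a finite-index subgroup $K_r \leq \Gamma$ with $K_r \cap B_\Gamma(r) = H \cap B_\Gamma(r)$. To this end, consider the clopen set $U_r = C_{r, H \cap B_\Gamma(r)}$, which is a neighborhood of $H$ in $\Sub(\Gamma)$. Since $H \in \supp(\mu)$, we have $\mu(U_r) > 0$.

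Next I would use that $\mu$ is co-sofic: there is a sequence $(\mu_n)_{n=1}^\infty$ of finite-index IRSs with $\mu_n \to \mu$ in the weak-$*$ topology. Because $U_r$ is clopen, its indicator function ${\bf 1}_{U_r}$ is continuous on $\Sub(\Gamma)$, so $\mu_n(U_r) \to \mu(U_r) > 0$. In particular, for all sufficiently large $n$, $\mu_n(U_r) > 0$. Since each $\mu_n$ is atomic and supported on finite-index subgroups, the positivity of $\mu_n(U_r)$ forces $U_r$ to contain at least one atom of $\mu_n$, i.e.\ a finite-index subgroup $K_r \leq \Gamma$ with $K_r \cap B_\Gamma(r) = H \cap B_\Gamma(r)$.

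Letting $r \to \infty$ yields a sequence of finite-index subgroups $K_r$ converging to $H$ in the Chabauty topology, so $H \in \overline{\Sub_{\findex}(\Gamma)}$, as desired. There is no real obstacle here; the only point to be mildly careful about is the use of the clopenness of the basic neighborhoods $C_{r,W}$ (noted earlier in Section \ref{sec:IRS}), which is what lets us pass from weak-$*$ convergence of the $\mu_n$ to genuine convergence of $\mu_n(U_r)$ rather than only a Portmanteau inequality.
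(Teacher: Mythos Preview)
Your proof is correct and follows essentially the same argument as the paper: pick $H\in\supp(\mu)$, use that the clopen basic neighborhood $C_{r,H}$ has positive $\mu$-measure, pass this positivity to some approximating finite-index IRS $\mu_n$ via weak-$*$ convergence, and extract a finite-index subgroup in $C_{r,H}$ for each $r$. The paper's proof is terser but identical in substance; your added remark about clopenness being what upgrades Portmanteau to genuine convergence is a nice clarification.
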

\begin{proof}
Let $H\in\supp\left(\mu\right)$.
Take a sequence $\left(\mu_n\right)_{n=1}^{\infty}$ of
finite-index IRSs converging to $\mu$.
Let $r\in\NN$.
Since $\mu\left(C_{r,H}\right)>0$, there is $n\in\NN$
for which $\mu_n\left(C_{r,H}\right)>0$.
Therefore, there is a finite-index subgroup $H_r$ of $\Gamma$
satisfying $H_r\in C_{r,H}$.
Then $H_r\rightarrow H$, and so
$H\in\overline{\Sub_{\findex}(\Gamma)}$.
\end{proof}

Given an IRS
$\mu\in\IRS(\Gamma)$, we say that $\mu$ is \emph{co-sofic in $\FF$} if it is co-sofic as an element of
$\IRS(\FF)$
under the natural inclusion of $\IRS(\Gamma)$ in $\IRS(\FF)$, i.e., if it is the limit of a sequence $\left(\mu_{n}\right)_{n=1}^{\infty}$
in $\IRS(\FF)$ of finite-index IRSs.
Therefore, for emphasis, we sometimes say ``co-sofic in
$\Gamma$'' instead of ``co-sofic''.

\section{Remarks on the profinite topology on an abstract group}
\label{sec:profinite}
Recall that the profinite topology on the group $\Gamma$ is the topology,
making $\Gamma$ a topological group, for which the finite-index
subgroups form a basis of neighborhoods of $1_{\Gamma}$. The closure of
a subgroup $H$ of $\Gamma$ under the profinite topology of $\Gamma$ equals
the intersection of the finite-index subgroups of $\Gamma$ containing
$H$. We refer to this closure as \emph{the profinite closure} $\overline{H}$
of $H$ in $\Gamma$, and if $\overline{H}=H$, we say that $H$ is \emph{profinitely-closed} in $\Gamma$.
If $H$ is normal in $\Gamma$, then $\overline{H}$ equals the intersection
of the \emph{normal} finite-index subgroups of $\Gamma$ which contain $H$.
Write $\Sub_{\pclosed}(\Gamma)$
for the subspace of $\Sub(\Gamma)$
of profinitely-closed subgroups of $\Gamma$.
Note that the trivial subgroup $\left\{1\right\}$ of $\Gamma$ is profinitely-closed
if and only if $\Gamma$ is residually-finite.
\begin{lem}
\label{lem:profinite-vs-chabauty}
$\overline{\Sub_{\findex}(\Gamma)}
\cap\Sub_{\fg}(\Gamma)
\subset \Sub_{\pclosed}(\Gamma)
\subset
\overline{\Sub_{\findex}(\Gamma)}
\text{ .}$
\end{lem}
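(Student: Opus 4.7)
The plan is to establish the two inclusions separately; both are short and essentially amount to translating between the Chabauty topology and the profinite topology by truncating to balls $B_{\Gamma}(r)$.

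For the right-hand inclusion $\Sub_{\pclosed}(\Gamma) \subset \overline{\Sub_{\findex}(\Gamma)}$, I would fix a profinitely-closed subgroup $H \leq \Gamma$ and, given $r \geq 1$, construct a finite-index subgroup $K_r$ whose intersection with $B_{\Gamma}(r)$ coincides with $H \cap B_{\Gamma}(r)$. For each $g \in B_{\Gamma}(r) \setminus H$, since $H$ is profinitely-closed there exists a finite-index subgroup $L_g \supseteq H$ avoiding $g$; set $K_r = \bigcap_{g \in B_{\Gamma}(r) \setminus H} L_g$, which is a finite intersection of finite-index subgroups, hence finite-index, and still contains $H$. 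Any $g \in B_{\Gamma}(r) \setminus H$ is excluded because $g \notin L_g \supseteq K_r$, so $K_r \cap B_{\Gamma}(r) = H \cap B_{\Gamma}(r)$. Recalling from Section \ref{sec:IRS} that agreement on balls of radius $r$ gives arbitrarily small $d_{\prodname}$-distance, we conclude $K_r \to H$ in the Chabauty topology.

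For the left-hand inclusion $\overline{\Sub_{\findex}(\Gamma)} \cap \Sub_{\fg}(\Gamma) \subset \Sub_{\pclosed}(\Gamma)$, I would take a finitely-generated $H \in \overline{\Sub_{\findex}(\Gamma)}$, choose finite generators $h_1,\dotsc,h_k$ of $H$, and show that every $g \in \Gamma \setminus H$ is excluded by some finite-index subgroup containing $H$. Pick $r$ large enough that $h_1,\dotsc,h_k$ and $g$ all lie in $B_{\Gamma}(r)$, and take a sequence of finite-index subgroups $K_n \to H$; for $n$ sufficiently large, $K_n \cap B_{\Gamma}(r) = H \cap B_{\Gamma}(r)$. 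In particular each $h_i$ lies in $K_n$, whence $H = \langle h_1,\dotsc,h_k\rangle \subseteq K_n$, while $g \notin K_n$. Thus $g$ is not in the profinite closure of $H$, proving $H = \overline{H}$.

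Neither direction presents a real obstacle; the only ingredient beyond definitions is the standard observation, already noted in Section \ref{sec:IRS}, that equality of two subgroups on a ball of radius $r$ implies Chabauty proximity. The finite generation hypothesis is used only once, to promote ``contains $h_1,\dotsc,h_k$'' to ``contains $H$''; without it the argument would collapse, which is consistent with the fact that $\overline{\Sub_{\findex}(\Gamma)}$ may contain non-profinitely-closed (necessarily non-finitely-generated) subgroups in general.
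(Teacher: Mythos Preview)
Your proof is correct and follows essentially the same route as the paper's. The only cosmetic difference is in the right inclusion: where the paper writes $H$ as a (decreasing) countable intersection $\bigcap_n H_n$ of finite-index subgroups and observes $H_n \to H$, you build $K_r$ directly for each radius $r$ by intersecting finitely many $L_g$; both amount to the same standard separation argument, and the left inclusion is argued identically.
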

\begin{proof}
For the right inclusion, take
$H\in\Sub_{\pclosed}(\Gamma)$.
Then, there is a sequence $\left(H_n\right)_{n=1}^{\infty}$
of finite-index
subgroups of $\Gamma$ such that
$H=\cap_{n=1}^{\infty}H_n$. Hence, $H_n\rightarrow H$.

For the left inclusion, take
$H\in\overline{\Sub_{\findex}(\Gamma)}
\cap\Sub_{\fg}(\Gamma)$.
Take a sequence $H_n$ of finite-index subgroups of $H$
converging to $H$. Fix a finite generating set $T$ for $H$.
There is $n_0\geq 1$ such that for $n\geq n_0$,
$T\subset H_n$, hence $H\subset H_n$.
Therefore, $H=\cap_{n=n_0}^{\infty}H_n$,
and so $H$ is profinitely-closed.
\end{proof}
The group $\Gamma$ is \emph{LERF} (locally extended residually finite)
if every finitely-generated subgroup of $\Gamma$ is profinitely-closed.
Equivalently, $\Gamma$ is LERF if every subgroup of $\Gamma$ is a limit in
$\Sub(\Gamma)$ of finite-index subgroups.
%
%

\section{Benjamini-Schramm convergence, hyperfiniteness, and applications}
\label{sec:borel}
Consider the compact space $\left[0,1\right]^{\NN}$ and the metric
$d_{\prodname}$ on $\left[0,1\right]^{\NN}$ defined by $d_{\prodname}\left(\left(a_{k}\right)_{k=1}^{\infty},\left(b_{k}\right)_{k=1}^{\infty}\right)=\sum_{k=1}^{\infty}2^{-k}\cdot\left|a_{k}-b_{k}\right|$
and generating the product topology of $\left[0,1\right]^{\NN}$.
Fix an enumeration $\left(\left(\rho_{i},W_{i}\right)\right)_{i=1}^{\infty}$
of all pairs $\left(\rho,W\right)\in\mathbb{Z}_{\geq0}\times2^{\FF}$
satisfying $W_{i}\subset B_{\FF}\left(\rho_{i}\right)$ , namely,
$W_{i}$ is a subset of the ball of radius $\rho_{i}$ in $\FF$.
For an $\FF$-probability-space $X$,
define $p_{i}\left(X\right)=\Pr_{x\in X}\left(\Stab_{\FF}\left(x\right)\cap B_{\FF}\left(\rho_{i}\right)=W_{i}\right)$
and $\calL\left(X\right)=\left(p_{i}\left(X\right)\right)_{i=1}^{\infty}\in\left[0,1\right]^{\NN}$.
For a pair $X,Y$ of $\FF$-probability-spaces, define
$d_{\stat}\left(X,Y\right)=d_{\prodname}\left(\calL\left(X\right),\calL\left(Y\right)\right)$.
So, $\left\{p_i\left(X\right)\right\}_{i=1}^{\infty}$ gives the
``local statistics'' of the stabilizers of the action of $\FF$ on $X$.
Note that $d_{\stat}$ defines a pseudometric on the space of (equivalence classes
of) $\FF$-probability-space, which becomes an actual metric when restricted to
finite $\FF$-sets. Convergence under the $d_{\stat}$ metric is called
\emph{Benjamini-Schramm convergence} (more precisely, it is a directed,
edge-labeled version of Benjamini-Schramm convergence).
\begin{lem}
\label{lem:dstat-weakstar}
Let $\left(X_{n}\right)_{n=1}^{\infty}$ and $\left(Y_{n}\right)_{n=1}^{\infty}$
be p.m.p. $\FF$-spaces. Write $\mu_{n}$ and $\nu_{n}$ for
the associated IRSs of $X_{n}$ and $Y_{n}$, respectively.
Then:
\begin{enumerate}[label=(\roman*)]
\item If $\mu_{n}\rightarrow\mu$ for some $\mu\in\IRS(\FF)$
and $d_{\stat}\left(X_{n},Y_{n}\right)\rightarrow0$, then $\nu_{n}\rightarrow\mu$.
\item If $\mu_{n}\rightarrow\lambda$ and $\nu_{n}\rightarrow\lambda$
for some $\lambda\in\IRS(\FF)$, then
$d_{\stat}\left(X_{n},Y_{n}\right)\rightarrow0$
\end{enumerate}
\end{lem}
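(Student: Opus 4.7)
The central observation is that $p_i(X)$ is exactly the mass the associated IRS assigns to the basic clopen set $C_{\rho_i, W_i}$. Indeed, if $\mu_X = f_{*}\nu_X$ is the IRS associated with the $\FF$-probability-space $X$ (where $f(x) = \Stab_{\FF}(x)$), then since $W_i \subset B_{\FF}(\rho_i)$ we have $W_i \cap B_{\FF}(\rho_i) = W_i$, and hence
\[
p_i(X) = \Pr_{x \in X}\!\bigl(\Stab_{\FF}(x) \in C_{\rho_i, W_i}\bigr) = \mu_X\!\left(C_{\rho_i, W_i}\right).
\]
In particular $\calL(X)$ depends only on $\mu_X$, and with a mild abuse of notation I may write $\calL(\mu) = (\mu(C_{\rho_i, W_i}))_{i=1}^{\infty}$ for any $\mu \in \IRS(\FF)$.

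The characterization of weak-$*$ convergence obtained in Section \ref{sec:IRS} using the partitions $\calP_r(\FF)$ states that $\mu_n \to \mu$ in $\IRS(\FF)$ if and only if $\mu_n(C_{r,W}) \to \mu(C_{r,W})$ for every $r \geq 1$ and every $W \subset B_{\FF}(r)$. Because the enumeration $((\rho_i, W_i))_{i=1}^{\infty}$ runs over precisely all such pairs, this is equivalent to coordinate-wise convergence $\calL(\mu_n) \to \calL(\mu)$ in $[0,1]^{\NN}$, which in turn—since $d_{\prodname}$ metrizes the product topology on $[0,1]^{\NN}$—is equivalent to $d_{\prodname}(\calL(\mu_n), \calL(\mu)) \to 0$. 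Thus, on the image of the ``associated IRS'' assignment, weak-$*$ convergence of IRSs is exactly the pullback of $d_{\prodname}$ via $\calL$, i.e.\ $d_{\stat}$.

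Both parts of the lemma are then immediate from the triangle inequality in the pseudometric $d_{\stat}$. For (i), the hypothesis $\mu_n \to \mu$ gives $d_{\prodname}(\calL(X_n), \calL(\mu)) \to 0$, while $d_{\stat}(X_n, Y_n) = d_{\prodname}(\calL(X_n), \calL(Y_n)) \to 0$ is assumed, so $d_{\prodname}(\calL(Y_n), \calL(\mu)) \to 0$ and $\nu_n \to \mu$. For (ii), both $d_{\prodname}(\calL(X_n), \calL(\lambda))$ and $d_{\prodname}(\calL(Y_n), \calL(\lambda))$ tend to $0$ by the weak-$*$ hypotheses, so the triangle inequality forces $d_{\stat}(X_n, Y_n) \to 0$.

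I do not anticipate any substantive obstacle: the lemma is essentially the bookkeeping statement ``$d_{\stat}$ metrizes weak-$*$ convergence of the associated IRSs.'' The one point that needs care is verifying that the enumeration $((\rho_i, W_i))$ hits every pair $(r, W)$ with $W \subset B_{\FF}(r)$, so that no cylinder $C_{r,W}$ needed for the weak-$*$ criterion is omitted; this is built into the definition of the enumeration and requires no further argument.
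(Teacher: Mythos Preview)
Your proof is correct and follows essentially the same approach as the paper: both rely on the identity $p_i(X)=\mu_X(C_{\rho_i,W_i})$ and the characterization of weak-$*$ convergence via the clopen sets $C_{r,W}$. The paper argues coordinate-wise on each $C_{r,W}$ while you package the same steps as a triangle-inequality argument in $d_{\prodname}$, but the content is identical.
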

\begin{proof}
(i) Take $r\geq1$ and $W\subset B_{\FF}\left(r\right)$.
Under the hypothesis of (i),
$\mu_n\left(C_{r,W}\right) \overset{n\rightarrow\infty}{\longrightarrow}
\mu\left(C_{r,W}\right)$ and
$\left|\mu_n\left(C_{r,W}\right)-\nu_n\left(C_{r,W}\right)\right|
\overset{n\rightarrow\infty}{\longrightarrow}0$.
Hence, $\nu_n\left(C_{r,W}\right)
\overset{n\rightarrow\infty}{\longrightarrow}
\mu\left(C_{r,W}\right)$, and so
$\nu_n\rightarrow\mu$. (ii)
Take $r\geq1$ and $W\subset B_{\FF}\left(r\right)$.
Under the hypothesis of (ii), 
$\mu_n\left(C_{r,W}\right) \overset{n\rightarrow\infty}{\longrightarrow}
\lambda\left(C_{r,W}\right)$ and
$\mu_n\left(C_{r,W}\right) \overset{n\rightarrow\infty}{\longrightarrow}
\lambda\left(C_{r,W}\right)$.
Hence, 
$\left|\mu_n\left(C_{r,W}\right)-\nu_n\left(C_{r,W}\right)\right|
\overset{n\rightarrow\infty}{\longrightarrow}0$,
and so $d_{\stat}\left(X_{n},Y_{n}\right)\rightarrow0$.
\end{proof}
We now generalize Definition \ref{def:gen-metric}, and relate $d_{\gen}$
and $d_{\stat}$.
\begin{defn}
\label{def:gen-metric-probspace}Let $\left(X,\mu\right)$ and $\left(Y,\nu\right)$
be p.m.p. $\FF$-spaces. For a measured-space
isomorphism $f\colon X\rightarrow Y$, define
\[
\|f\|_{\gen}=\frac{1}{\left|S\right|}\cdot\sum_{s\in S}\mu\left(\left\{ x\in X\mid f\left(s\cdot x\right)\neq s\cdot f\left(x\right)\right\} \right)
\]
Finally, let 
\[
d_{\gen}\left(X,Y\right)=\inf\left\{ \|f\|_{\gen}\mid f\colon X\rightarrow Y\,\,\text{is a measured-space isomorphism}\right\} \text{.}
\]
\end{defn}

\begin{prop}
\label{prop:dgen-dstat}Let $\left\{ \left(X_{n},\nu_{n}\right)\right\} _{n=1}^{\infty}$
and $\left\{ \left(Y_{n},\lambda_{n}\right)\right\} _{n=1}^{\infty}$
be sequences of p.m.p.
$\FF$-spaces satisfying
$d_{\gen}\left(X_{n},Y_{n}\right)\rightarrow0$.
Then, $d_{\stat}\left(X_{n},Y_{n}\right)\rightarrow0$.
\end{prop}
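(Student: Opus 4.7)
The plan is to reduce the $d_{\stat}$-bound to a coordinate-wise bound on each $p_i$ and then produce such bounds from the near-equivariance of a good generator-matching $f_n\colon X_n\to Y_n$. Since
\[
d_{\stat}(X_n,Y_n)=\sum_{i=1}^{\infty}2^{-i}\,\lvert p_i(X_n)-p_i(Y_n)\rvert,
\]
it suffices, given $\eps>0$, to truncate the sum at some $N$ with $\sum_{i>N}2^{-i}<\eps/2$ and then show that for each fixed $i\le N$, $\lvert p_i(X_n)-p_i(Y_n)\rvert\to 0$ as $n\to\infty$; a standard combination of the two halves yields $d_{\stat}(X_n,Y_n)<\eps$ for large $n$.

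To handle a single coordinate $i$, I would first choose, for each $n$, a measured-space isomorphism $f_n\colon X_n\to Y_n$ with $\|f_n\|_{\gen}\le d_{\gen}(X_n,Y_n)+1/n$, so that $\|f_n\|_{\gen}\to 0$. The crucial intermediate step is to upgrade the ``one-letter'' control built into $\|\cdot\|_{\gen}$ to control on every word $w\in B_{\FF}(\rho_i)$. Concretely, for each $w\in\FF$ of length $\ell$, I would prove by induction on $\ell$ that
\[
\nu_n\bigl(\{x\in X_n:f_n(w\cdot x)\neq w\cdot f_n(x)\}\bigr)\;\le\;\ell\cdot|S|\cdot\|f_n\|_{\gen}.
\]
The inductive step decomposes $w=s^{\pm1}w'$, applies the one-letter bound to $s$, and invokes the fact that the $\FF$-action on $X_n$ is measure-preserving, so that the ``bad'' set for $s^{-1}$ has the same measure as that for $s$.

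Summing over $w$ in the finite ball $B_{\FF}(\rho_i)$ produces a single exceptional set $E_{i,n}\subset X_n$ of measure at most $|B_{\FF}(\rho_i)|\cdot\rho_i\cdot|S|\cdot\|f_n\|_{\gen}$, which tends to $0$ as $n\to\infty$ for the fixed $i$. For $x\notin E_{i,n}$, the bijectivity of $f_n$ together with $f_n(w\cdot x)=w\cdot f_n(x)$ for every $w\in B_{\FF}(\rho_i)$ gives $\Stab_{\FF}(x)\cap B_{\FF}(\rho_i)=\Stab_{\FF}(f_n(x))\cap B_{\FF}(\rho_i)$. Since $f_n$ pushes $\nu_n$ to $\lambda_n$, this symmetric difference bound yields $\lvert p_i(X_n)-p_i(Y_n)\rvert\le 2\,\nu_n(E_{i,n})\to 0$, completing the coordinate-wise estimate.

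The only step that requires any care is the word-length induction, and in particular the treatment of inverse generators; the rest is a standard truncation plus pigeonhole. I expect the proof to be short once this estimate is stated, and the main subtlety is just bookkeeping that uses measure-preservation of the $\FF$-action on both sides.
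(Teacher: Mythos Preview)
Your proposal is correct and follows essentially the same route as the paper. The paper packages the estimate slightly differently: rather than inducting on word length, it sets $P_n=\bigcup_{s\in S}\{x:f_n(s\cdot x)\neq s\cdot f_n(x)\}$ and observes directly that for $x\notin B_{X_n}(P_n,r)$ one has $\Stab_\FF(x)\cap B_\FF(r)=\Stab_\FF(f_n(x))\cap B_\FF(r)$, with $\nu_n\bigl(B_{X_n}(P_n,r)\bigr)\le(2|S|)^{r+1}\nu_n(P_n)\to 0$. Your word-length induction unravels exactly this containment, so the two arguments are cosmetic variants of one another; the paper's version just absorbs your inductive step into the single observation that a point at distance $>r$ from $P_n$ has all its $S^{\pm1}$-steps within radius $r$ landing outside $P_n$. (Incidentally, your final bound can be sharpened to $\nu_n(E_{i,n})$ rather than $2\nu_n(E_{i,n})$, since $A$ and $B$ coincide off $E_{i,n}$.)
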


\begin{proof}
Take a sequence $\left\{ f_{n}\right\} _{n=1}^{\infty}$ of measured-space
isomorphisms $f_{n}\colon X_{n}\rightarrow Y_{n}$ such that $\|f_{n}\|_{\gen}\rightarrow0$.
For $n\in\NN$, let
\[
P_{n}=\cup_{s\in S}\left\{ x\in X_{n}\mid f_{n}\left(s\cdot x\right)\neq s\cdot f_{n}\left(x\right)\right\}\text{ ,}
\]
and for $r\in\NN$, let $Q_{n}^{r}=X_{n}\setminus B_{X_{n}}\left(P_{n},r\right)$
(in the notation of Section \ref{sec:notation}).
Then, $\left\{ Q_{n}^{r}\right\} _{n,r\in\NN}$
are Borel sets, and for each $r\in\NN$, 
\[
\nu\left(B_{X_n}\left(P_n,r\right)\right)\leq
\left(2\cdot \left|S\right|\right)^{r+1}\cdot\nu_n\left(P_n\right)
\overset{n\rightarrow\infty}{\longrightarrow} 0 \text{ ,}
\]
hence
$\nu_{n}\left(Q_{n}^{r}\right)\overset{n\rightarrow\infty}{\longrightarrow}1$.
Furthermore, for each $x\in Q_{n}^{r}$, $\Stab_{\FF}\left(x\right)\cap B_{\FF}\left(r\right)=\Stab_{\FF}\left(f_{n}\left(x\right)\right)\cap B_{\FF}\left(r\right)$.
Hence, for each $i\in\NN$, there is $r\in\NN$ such that $\left|p_{i}\left(X_{n}\right)-p_{i}\left(Y_{n}\right)\right|\leq1-\nu_{n}\left(Q_{n}^{r}\right)$
for all $n\in\NN$, and so $\left|p_{i}\left(X_{n}\right)-p_{i}\left(Y_{n}\right)\right|\overset{n\rightarrow\infty}{\longrightarrow}0$.
Thus, $d_{\stat}\left(X_{n},Y_{n}\right)\rightarrow0$.
\end{proof}
In Proposition \ref{prop:our-elek-newman-sohler} below,
we will give a partial converse to
Proposition \ref{prop:dgen-dstat} in the context of actions of an amenable
group on finite sets.


Let $X$ be a standard Borel space.
Let $E$ be a \emph{Borel equivalence relation} on $X$, i.e., $E\subset X\times X$
is a Borel set which is an equivalence relation.
We write $x\sim_{E}y$ if $\left(x,y\right)\in E$.
Let $E\subset X\times X$ be a Borel equivalence relation.
Then, $E$ is \emph{finite} (resp.\ \emph{countable})
if all of its equivalence classes are finite (resp.\ countable).
A countable equivalence relation $E$ is \emph{hyperfinite}
if it can be written as an \emph{ascending} union of finite Borel equivalence relations.
If $\mu$ is a probability measure on $X$, then $E$ is
\emph{hyperfinite $\mu$-a.e}.
if there is a $\mu$-co-null Borel subset $X_0\subset X$, respecting $E$,
such that the restriction of $E$ to $X_0$ is hyperfinite.
A Borel action $\Gamma\curvearrowright X$ gives rise to a Borel equivalence
relation on $X$ which we denote by $E_{X}^{\Gamma}$.
If $(X,\mu)$ is a p.m.p. $\Gamma$-space, then
the action $\Gamma\curvearrowright \left(X,\mu\right)$ is called \emph{hyperfinite}
if the equivalence relation $E_{X}^{\Gamma}$ is hyperfinite $\mu$-a.e.
A well-known theorem of Ornstein-Weiss \cite{OW} says that every action of an
amenable group is hyperfinite. For a thorough treatment of Borel equivalence
relations, see \cite{KM}.

\begin{defn}
Let $\calX$ be a family of finite graphs. Then, $\calX$ is \emph{hyperfinite}
if for every $\epsilon>0$ there is $K\in\NN$, such that for each
graph $X\in\calX$, there is a set $Z\subset V\left(X\right)$, $\left|Z\right|<\epsilon\cdot\left|V\left(X\right)\right|$,
such that after removing from $X$ all edges incident to $Z$,
each component of the resulting graph is of size at most $K$.
\end{defn}
\begin{prop}
\label{prop:amenable-schreier-hyperfinite}
Assume that $\Gamma$ is amenable. Then, the sequence $\left\{ X_{n}\right\} _{n=1}^{\infty}$
of all finite Schreier graphs of $\Gamma$ is hyperfinite.
\end{prop}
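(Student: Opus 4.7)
The plan is to invoke the Ornstein--Weiss quasi-tiling theorem for amenable groups, applied to the $\Gamma$-action on each finite Schreier graph. Given $\epsilon > 0$, amenability of $\Gamma$ provides a finite family of Følner sets $T_{1},\dotsc,T_{N} \subset \Gamma$, each with $|\partial_{S} T_{i}|/|T_{i}| < \epsilon/2$, that together $(\epsilon/2)$-quasi-tile $\Gamma$ in the sense of Ornstein--Weiss. Let $K := \max_{i} |T_{i}|$; this will serve as the uniform bound on component sizes in the definition of hyperfiniteness (enlarging $K$ at the end to absorb the threshold below which we treat $X$ as a single component).

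Given a finite Schreier graph $X$ of $\Gamma$, I would greedily select disjoint translates $Y_{j} := T_{i(j)} \cdot x_{j} \subset X$ for which the map $t \mapsto t \cdot x_{j}$ is injective on $T_{i(j)}$, i.e., $Y_{j}$ is a \emph{free copy} of $T_{i(j)}$. The Ornstein--Weiss counting argument, adapted to the $\Gamma$-set $X$ rather than to a subset of $\Gamma$, yields that, provided $|X|$ exceeds a threshold depending only on $\epsilon$ and the $T_{i}$, the disjoint union $\bigcup_{j}Y_{j}$ can be made to cover at least $(1-\epsilon/2)|X|$ vertices. Set $Z := \bigl(X\setminus\bigcup_{j}Y_{j}\bigr)\cup\bigcup_{j}\partial_{S}Y_{j}$. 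Freeness of each embedding gives $|\partial_{S}Y_{j}| \leq |\partial_{S}T_{i(j)}| < (\epsilon/2)|Y_{j}|$, so $|Z| < \epsilon|X|$. Any edge of $X$ leaving a tile $Y_{j}$ must exit through a vertex of $\partial_{S}Y_{j} \subset Z$, so removing the edges incident to $Z$ isolates each tile and leaves components of size at most $K$.

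The main obstacle I anticipate is verifying the quasi-tiling step for arbitrary $\Gamma$-actions on finite sets, rather than for Følner subsets of $\Gamma$. A Schreier graph $X=\Gamma/H$ may have stabilizers whose intersection with $T_{i}^{-1}T_{i}$ is nontrivial for every $i$, and then no $T_{i}$ embeds freely anywhere in $X$ --- for example, when $\Gamma = \ZZ^{2}$ acts on $\ZZ/n$ via projection to one coordinate, killing a whole direction. The resolution, which I expect is the content of Elek's contribution acknowledged in the paper, is either (i) to enlarge the family $\{T_{i}\}$ to include Følner sets for the finite amenable sub-quotients of $\Gamma$ that arise as effective actions, or (ii) to relax the freeness condition and control boundary contributions from non-free tiles using that the stabilizer statistics form an element of $\IRS(\Gamma)$, which by compactness is controlled up to error $\epsilon$ by a finite list of possibilities. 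Equivalently, one may argue indirectly by passing to a Benjamini--Schramm limit of any subsequence: the limit yields a p.m.p.\ $\Gamma$-action, whose orbit equivalence relation is amenable (as $\Gamma$ is amenable) and hence hyperfinite by Connes--Feldman--Weiss, and a theorem of Schramm transfers hyperfiniteness from the limit back to the sequence uniformly.
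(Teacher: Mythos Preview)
Your final paragraph --- the indirect argument via a Benjamini--Schramm limit, realization of the limiting IRS by a p.m.p.\ $\Gamma$-action, hyperfiniteness of that action by amenability, and Schramm's transfer theorem back to the sequence --- is precisely the paper's proof. The paper argues by contradiction: if the family were not hyperfinite, some $\epsilon>0$ and a subsequence would witness this; pass to a further subsequence whose IRSs converge (compactness of $\IRS(\Gamma)$), realize the limit by a p.m.p.\ action (Ab\'ert--Glasner--Vir\'ag), invoke Ornstein--Weiss for hyperfiniteness of that action, and then Schramm/Elek to conclude the subsequence is hyperfinite after all. So the route you sketch last is the one actually taken, with Ornstein--Weiss (amenable \emph{group}) in place of Connes--Feldman--Weiss (amenable \emph{relation}), and with the AGV realization step made explicit.

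Your primary plan, the direct quasi-tiling of each Schreier graph, is not used in the paper, and the obstacle you flag is genuine: for an arbitrary transitive $\Gamma$-set $\Gamma/H$ there is no reason any fixed F{\o}lner set in $\Gamma$ should embed freely anywhere, so the greedy covering can fail to cover a $(1-\epsilon/2)$-fraction. Your proposed resolutions (i) and (ii) are not developed enough to close this gap --- in particular, (i) would require producing a \emph{single finite} family of tiles that works uniformly for \emph{all} finite $\Gamma$-sets, and it is not clear how to do this without already passing through a compactness/limit argument. The paper simply sidesteps the issue by going to the limit, where Ornstein--Weiss applies directly to the action rather than to tiles in $\Gamma$.
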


\begin{proof}
For $n\in\NN$ and $Z\subset V\left(X_{n}\right)$, let $c_{n,Z}$
be the size of the largest component of the graph resulting from removing
all edges incident to $Z$ from $X_{n}$. For $\epsilon>0$, let $c_{n,\epsilon}=\min\left\{ c_{n,Z}\mid Z\subset V\left(X_{n}\right),\left|Z\right|<\epsilon\cdot\left|V\left(X_{n}\right)\right|\right\} $.
Assume, for the sake of contradiction, that $\left\{ X_{n}\right\} _{n=1}^{\infty}$
is not a hyperfinite family.
Then, there is $\epsilon>0$ and an increasing
sequence $\left\{ n_{k}\right\} _{n=1}^{\infty}$ such that
$c_{n_{k},\epsilon}\overset{k\rightarrow\infty}{\longrightarrow}\infty$.
Write $\mu_n\in\IRS(\Gamma)$ for the IRS associated with $X_n$.
Since $\IRS(\Gamma)$ is compact, we may further assume
that $\mu_n\overset{n\rightarrow\infty}{\longrightarrow}\mu$
for some $\mu\in\IRS(\Gamma)$.
By Proposition 13 of \cite{agv},
there is a p.m.p. $\Gamma$-space $X$
whose associated IRS is $\mu$.
By \cite{OW}, the action $\Gamma\curvearrowright X$ is hyperfinite since
$\Gamma$ is amenable. Then, by Theorem 1.1 of \cite{schramm2008} (see also Theorem 1 of \cite{elek2012}),
$\left\{ X_{n_{k}}\right\} _{k=1}^{\infty}$
is a hyperfinite family, a contradiction.
\end{proof}

Recall that a bijection $f\colon X\rightarrow Y$ between measured-spaces
$\left(X,\mu\right)$ and $\left(Y,\nu\right)$ is a \emph{measured-space isomorphism} if $f$ and $f^{-1}$ are both Borel maps, and for each
Borel set $A\subset X$, $\mu\left(A\right)=\nu\left(f\left(A\right)\right)$.
A measured-space isomorphism from a measured-space 
to itself is called a
\emph{measured-space automorphism}.

\begin{prop}
\label{prop:amenable-co-sofic}
Assume that $\Gamma$ is amenable and $\mu\in\IRS(\Gamma)$. Then,
$\mu$ is co-sofic in $\FF$.
\end{prop}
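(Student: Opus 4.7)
The plan is to realize $\mu$ as the associated IRS of a p.m.p.\ $\Gamma$-space, use amenability to make that action hyperfinite, and then chop the space into finite pieces which, after a small boundary modification, give finite $\FF$-sets whose associated IRSs converge to $\mu$.

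First, by Proposition 13 of \cite{agv}, one obtains a p.m.p.\ $\Gamma$-space $(X,\nu)$ whose associated IRS is $\mu$. Viewed through $\pi\colon\FF\twoheadrightarrow\Gamma$, the space $(X,\nu)$ is also a p.m.p.\ $\FF$-space with the same orbit equivalence relation $E_X^{\FF}=E_X^{\Gamma}$, and its associated IRS in $\IRS(\FF)$ still equals $\mu$ under the inclusion $\IRS(\Gamma)\subset\IRS(\FF)$. Since $\Gamma$ is amenable, the Ornstein--Weiss theorem \cite{OW} ensures that $E_X^{\FF}$ is hyperfinite $\nu$-a.e.

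Next, fix $n\in\NN$ and take $r=n$. Using hyperfiniteness, I would choose a finite Borel subequivalence relation $E_n\subset E_X^{\FF}$ whose classes are so large that the set of points whose $r$-ball in the Schreier graph of $X$ is not fully contained in its $E_n$-class has $\nu$-measure at most $1/n$. On each finite class $C$, the only obstruction to viewing $C$ as an $\FF$-set is at the boundary: for each $s\in S$, the outgoing boundary $\{x\in C\mid s\cdot x\notin C\}$ and the incoming boundary $\{y\in C\mid s^{-1}\cdot y\notin C\}$ have equal cardinality, because $s$ acts as a bijection on $X$. Choose, in a Borel-measurable way, a bijection between these two boundaries for each class and each $s$, and modify the action of $s$ on $C$ by rerouting unmatched outgoing $s$-arrows to the unmatched incoming $s$-arrows inside $C$. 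This yields a measure-preserving Borel $\FF$-action on $(X,\nu)$ all of whose orbits are finite and whose local $r$-statistics agree with those of $X$ outside a set of $\nu$-measure at most $1/n$. Selecting finitely many orbits of total measure greater than $1-1/n$ and approximating their weights by rationals, I would assemble these orbits with appropriate multiplicities into an honest finite $\FF$-set $Y_n$ satisfying $d_{\stat}(X,Y_n)\leq O(1/n)$.

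The associated IRS $\mu_n\in\IRS(\FF)$ of $Y_n$ is atomic and supported on finite-index subgroups of $\FF$, hence is a finite-index IRS of $\FF$. Applying Lemma \ref{lem:dstat-weakstar}(i) with constant sequence $X_n := X$, the convergence $d_{\stat}(X,Y_n)\to 0$ forces $\mu_n\to\mu$ in the weak-$*$ topology, witnessing that $\mu$ is co-sofic in $\FF$. The main obstacle is the boundary-modification step: one must verify that the half-edge pairing at the boundaries of the $E_n$-classes can be carried out globally in a Borel fashion so that the modified $\FF$-action is genuinely measure-preserving. The cardinality-matching condition is automatic from $s$ being a bijection on $X$, but the measurable selection of the pairings, together with the rational-weight approximation needed to extract an actual finite $\FF$-set, requires some care with standard measure-theoretic bookkeeping.
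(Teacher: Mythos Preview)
Your overall strategy coincides with the paper's: realize $\mu$ by a p.m.p.\ $\Gamma$-space via \cite{agv}, invoke Ornstein--Weiss for hyperfiniteness, and use a finite subequivalence relation to modify the $\FF$-action so that all orbits become finite, hence the associated IRS is a finite-index IRS of $\FF$ converging to $\mu$.

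Two points are worth noting. First, your boundary-matching step is exactly where the paper invests its effort: it proves a dedicated lemma (Lemma \ref{lem:pmp-bijection}) which, given a measure-preserving automorphism $f$ and a finite Borel equivalence relation $E$, produces a measure-preserving automorphism $h$ with $h(x)\sim_E x$ everywhere and $h=f$ on $\{x:f(x)\sim_E x\}$. The construction is explicit: decompose $X$ into ``$f$-cycles'' and ``maximal $f$-chains'' inside $E$-classes, and on each maximal chain $x,f(x),\dotsc,f^{(m)}(x)$ set $h(f^{(m)}(x))=x$. Because $h$ is, on each piece of a countable Borel partition, a fixed power of $f$, both Borelness and measure-preservation are immediate. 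Your cardinality-matching idea is morally the same, but as you note, making an arbitrary class-by-class pairing Borel and measure-preserving needs justification; the $f$-chain trick sidesteps this entirely. The paper also organizes the hyperfiniteness input slightly differently: rather than choosing $E_n$ with small $r$-boundary, it takes a small-measure set $Z$ such that $E_X^{\FF}$ restricted to $X\setminus Z$ already has finite classes, applies the lemma with $E=E_X^{\FF}|_{X\setminus Z}\cup\mathrm{diag}(Z)$, and then bounds $d_{\gen}(X_n,X)$ directly (hence $d_{\stat}\to 0$ via Proposition \ref{prop:dgen-dstat}).

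Second, your Step 5 is unnecessary and, as written, does not work. Once the modified action on $(X,\nu)$ has all orbits finite, its associated IRS is supported on finite-index subgroups of $\FF$; since $\FF$ has only countably many of those, this IRS is automatically atomic, i.e.\ a finite-index IRS. The paper stops here and applies Lemma \ref{lem:dstat-weakstar}(i) directly. Your attempt to ``select finitely many orbits of total measure greater than $1-1/n$'' fails when $\nu$ is non-atomic: there are then uncountably many orbits, each of measure zero. What you could do instead is group orbits by their (countably many) isomorphism types and approximate the resulting weights by rationals, but this detour is not needed for the proposition.
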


\begin{proof}
By Proposition 13 of \cite{agv}, there is a p.m.p.
$\Gamma$-space $\left(X,\nu\right)$
for which $\mu$ is the associated IRS. Since $\Gamma$ is amenable, the action
$\Gamma\curvearrowright X$ is hyperfinite by \cite{OW}.
From now on, regard $X$ as an $\FF$-probability-space,
and for each $s\in S$ (recall that $S$ is our fixed basis for $\FF$),
let $f_s\colon X\rightarrow X$ be the measured-space automorphism defined by
$f_s\left(x\right)=s\cdot x$.

Let $n\geq1$. Then, there is a Borel set $Z\subset X$,
$\nu\left(Z\right)<\frac{1}{n}$, such that all orbits of the restriction of
the equivalence relation $E_{X}^{\FF}$ to $X\setminus Z$ are finite.
Let $E=E_{X}^{\FF}\mid_{X\setminus Z} \cup
\left\{\left(x,x\right)\mid x\in Z\right\}$.
For each $s\in S$, Lemma $\ref{lem:pmp-bijection}$ below gives us a
measured-space automorphism
$h_s\colon X\rightarrow X$ which respects the equivalence relation
$E$ and agrees with $f_s$
on $X\setminus\left(Z\cup\left(f_s\right)^{-1}\left(Z\right)\right)$.
Let $X_n$ be the p.m.p. $\FF$-space which, as a probability space, equals $X$, endowed with the p.m.p. action of $\FF$ given by $s\cdot x=h_s\left(x\right)$
for each $s\in S$.
Then, $d_{\gen}\left(X_{n},X\right)\rightarrow0$,
and so, by Proposition \ref{prop:dgen-dstat}, $d_{\stat}\left(X_{n},X\right)\rightarrow0$.
Write $\mu_{n}\in\IRS(\FF)$ for
the IRS associated with $X_n$.
Then $\mu_n$ is a finite-index IRS since each $h_s$ respects
the finite Borel equivalence relation $E$.
By Lemma \ref{lem:dstat-weakstar}(i),
applied to the sequence $\left(X_n\right)_{n=1}^{\infty}$ against the constant
sequence $\left(X\right)_{n=1}^{\infty}$,
$\mu_{n}\rightarrow\mu$,
and so, $\mu$ is co-sofic in $\FF$.
\end{proof}

\begin{lem}
\label{lem:pmp-bijection}
Let $X$ be a probability space, $f\colon X\rightarrow X$ a measured-space
automorphism and $E\subset X\times X$ a finite Borel equivalence
relation on $X$.
Write $X_{E,f}=\{x\in X\mid f\left(x\right)\sim_{E} x\}$.
Then, there is a measured-space automorphism $h\colon X\rightarrow X$
such that $h\left(x\right)\sim_{E}x$ for every $x\in X$,
and $h$ agrees with $f$ on $X_{E,f}$.
\end{lem}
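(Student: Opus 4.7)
The plan is to extend $f|_{X_{E,f}}$ class by class to an automorphism $h$ which stays inside each $E$-class, via Lusin--Novikov enumeration and a Borel selection of class-wise permutations, and then to verify measure-preservation using the disintegration of $\mu$ over $X/E$.

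Set $A = X_{E,f}$ and $B = f(A)$. Since $f(x)\sim_{E} x$ on $A$, $f$ restricts on each $E$-class $C$ to a bijection $A\cap C \to B\cap C$; and as $f$ is measure-preserving and finite Borel equivalence relations are smooth, the disintegration $\mu = \int \mu_C\,d\bar\mu(C)$ over the standard Borel quotient $X/E$ exists and makes $f|_{A\cap C}$ a $\mu_C$-preserving bijection for $\bar\mu$-a.e.\ $C$. Partition $X = \bigsqcup_k X_k$ with $X_k = \{x : |[x]_E|=k\}$ and treat each piece separately: by Lusin--Novikov choose $E$-invariant Borel maps $\sigma_1,\dotsc,\sigma_k\colon X_k\to X_k$ with $\{\sigma_1(x),\dotsc,\sigma_k(x)\}=[x]_E$, and the induced Borel label $\ell\colon X_k\to\{1,\dotsc,k\}$, $\ell(\sigma_i(x))=i$. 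Each class $C\subset X_k$ now carries Borel-varying finite data: $P_C = \ell(A\cap C)$, $Q_C = \ell(B\cap C)$, the bijection $\phi_C\colon P_C\to Q_C$ induced by $f$, and the weight tuple $\bigl(\mu_C(\{\sigma_i(x)\})\bigr)_{i=1}^k$. Because $f|_{A\cap C}$ preserves $\mu_C$, the multisets of $\mu_C$-weights on $P_C$ and on $Q_C$ agree, hence so do those on the complements $\{1,\dotsc,k\}\setminus P_C$ and $\{1,\dotsc,k\}\setminus Q_C$; therefore $\phi_C$ extends to at least one $\pi\in S_k$ preserving the full weight tuple. Choose $\pi_C$ Borel-measurably from this nonempty finite set (e.g.\ the lex-smallest valid permutation), setting $\pi_C = \mathrm{id}$ on the $\bar\mu$-null exceptional set, and define $h(x) = \sigma_{\pi_{[x]_E}(\ell(x))}(x)$ for $x\in X_k$.

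By construction $h$ is a Borel bijection satisfying $h(x)\sim_{E} x$ and $h|_A = f|_A$, while $h|_C$ preserves the conditional measure $\mu_C$ for $\bar\mu$-a.e.\ $C$. Integrating the class-wise identity against $\bar\mu$ yields $\mu(h^{-1}(U)) = \mu(U)$ for every Borel $U\subset X$, so $h$ is a measured-space automorphism. The main obstacle is measure-preservation at the global level: preserving merely the atomic point masses inside each class is not enough, one must preserve the full conditional measure $\mu_C$. This is exactly what the multiset argument supplies once we know---from $f$ being $\mu$-preserving---that $f|_{A\cap C}$ is already $\mu_C$-preserving almost everywhere, and the Borel selection then promotes the class-wise existence of the extending permutations to a globally Borel-measurable $h$.
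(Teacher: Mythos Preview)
Your approach is correct but takes a different and heavier route than the paper's. The paper bypasses disintegration and Borel selection entirely by using the $f$-dynamics: since each $E$-class is finite, every point lies either on a finite $f$-cycle within its class or on a finite maximal $f$-chain $y_0,f(y_0),\dots,f^n(y_0)$ with $f^{-1}(y_0)\nsim_E y_0$ and $f^{n+1}(y_0)\nsim_E f^n(y_0)$; one sets $h=f$ on $X_{E,f}$ and closes each chain into a cycle by sending its tip $f^n(y_0)$ back to $y_0$. Formally, on the Borel set $X_n=\{x\notin X_{E,f}: f^{-i}(x)\sim_E x\text{ for }0\le i\le n\text{ but not for }i=n+1\}$ one puts $h=(f^{-1})^{(n)}$, so $h$ is piecewise a power of $f$ on an explicit countable Borel partition and both measure-preservation and the pointwise conclusions are immediate. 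Your class-by-class argument works too, but two points you pass over deserve a word. First, the assertion that $f|_{A\cap C}$ is $\mu_C$-preserving for $\bar\mu$-a.e.\ $C$ is not automatic from ``$f$ preserves $\mu$'': one verifies it by noting that for every Borel $T$ contained in $\{C:\sigma_i(C)\in A,\ f(\sigma_i(C))=\sigma_j(C)\}$ one has $\int_T w_i\,d\bar\mu=\mu(\sigma_i(T))=\mu(f(\sigma_i(T)))=\mu(\sigma_j(T))=\int_T w_j\,d\bar\mu$, whence $w_i=w_j$ a.e.\ there. Second, setting $\pi_C=\mathrm{id}$ on the exceptional null set yields only a.e.\ agreement of $h$ with $f$ on $A$, whereas the lemma asks for pointwise agreement; take instead any permutation extending $\phi_C$ on that null set. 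The paper's dynamical construction simply sidesteps both of these issues.
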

\begin{proof}
The idea behind the construction of the map $h$ is as follows:
Since the equivalence relation $E$ is finite,
the space $X$ can be decomposed as a disjoint union of
``finite $f$-cycles'' and
``finite maximal $f$-chains''.
That is, sets of the form
$x,f(x),f^{(2)}(x),\dotsc,f^{(m)}(x)$ for $x\in X$ and $m\geq 0$, such that
$f^{(i)}(x)\sim_{E}f^{(i+1)}(x)$ for each $0\leq i<m$,
and such that either $f^{(m+1)}(x)=x$ and $f^{(m)}(x)\sim_{E}x$
(these are the $f$-cycles), or
$f^{-1}(x)\nsim_{E}x$ and $f^{(m)}(x)\nsim_{E}f^{(m+1)}(x)$
(these are the maximal $f$-chains).
For each maximal $f$-chain, as above,
we define $h(f^{(i)}(x))=f^{(i+1)}(x)$ for each $0\leq i<m$
and $h(f^{(m)}(x))=x$.
On the $f$-cycles, we make $h$ identical to $f$.
The resulting function $h$ is a bijection.
We now formalize this construction in a way that enables us to see that the resulting
map $h$ is a Borel measure-preserving automorphism of $X$.

For each $n\geq 0$, let $A_n=\left\{x\in X\setminus X_{E,f}\mid
\left(f^{-1}\right)^{\left(n\right)}\left(x\right)\sim_{E}x\right\}$.
For each $n\geq 0$, set $X_n=\cap_{i=0}^{n}A_i\setminus A_{n+1}$.
So, a point $x\in X\setminus X_{E,f}$ satisfies $x\in X_n$ if and only if
$\left(f^{-1}\right)^{\left(k\right)}\left(x\right)
\sim_{E}x$ for every $1\leq k\leq n$, but
$\left(f^{-1}\right)^{\left(n+1\right)}\left(x\right)\nsim_{E}x$.
The sets $X_n$ are disjoint by construction.
Furthermore, since each equivalence class of $E$ is finite,
every $x\in X\setminus X_{E,f}$ belongs to $X_n$ for some $n\in\NN$.
Therefore, $\calC=\left\{X_{E,f}\right\}\cup\left\{X_n\right\}_{n=0}^{\infty}$ forms
a partition of $X$ into countably many Borel sets.
We define $h\colon X\rightarrow X$:
for $x\in X_{E,f}$, set $h\left(x\right)=f\left(x\right)$,
and for $x\in X_n$, set $h\left(x\right)=
\left(f^{-1}\right)^{\left(n\right)}\left(x\right)$.
Then, $h$ is a bijection.
By the definition of $h$ and since $f$ is a Borel automorphism, $h$ maps every
Borel subset of each set in the partition $\calC$ to a Borel subset of $X$.
Thus, $h$ maps every Borel subset of $X$ to a Borel set.
This shows that $h^{-1}$ is a Borel bijection, and so, since $X$ is a standard Borel
space, $h$ is a Borel bijection as well
(see Corollary 15.2 in \cite{kechris}).
Similarly, $h$ preserves the measure on $X$ because it does so when restricted
to each set in the partition $\calC$.
\end{proof}

The following result, which is essential for our needs, gives a converse to
Proposition \ref{prop:dgen-dstat} in case the $\FF$-sets are finite and the actions
in one of the sequences factor through an amenable quotient.
\begin{prop}
\label{prop:our-elek-newman-sohler}Assume that $\Gamma$ is amenable. Let
$\left(X_{n}\right)_{n=1}^{\infty}$ be a sequence of finite $\FF$-sets
and $\left(Y_{n}\right)_{n=1}^{\infty}$ a sequence of finite
$\Gamma$-sets,
satisfying $\left|X_{n}\right|=\left|Y_{n}\right|$ and
$d_{\stat}\left(X_{n},Y_{n}\right)\rightarrow0$. Then, $d_{\gen}\left(X_{n},Y_{n}\right)\rightarrow0$.
\end{prop}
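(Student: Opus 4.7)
The plan is to reduce the statement to the version of the Newman--Sohler theorem proved by Elek in \cite{elek2012}, which the authors flagged in the introduction as the essential ingredient: two sequences of finite edge-labelled graphs on vertex sets of the same size, each belonging to a hyperfinite family and satisfying $d_{\stat}\to 0$, automatically satisfy $d_{\gen}\to 0$. So the task splits into (a) verifying both sequences are hyperfinite, and (b) quoting the Elek theorem.

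For the sequence $\left(Y_n\right)_{n=1}^{\infty}$, each $Y_n$ is a finite $\Gamma$-set, hence (after decomposing into orbits) a disjoint union of finite Schreier graphs of $\Gamma$. Since $\Gamma$ is amenable, Proposition \ref{prop:amenable-schreier-hyperfinite} gives that the family of all finite Schreier graphs of $\Gamma$ is hyperfinite, and a disjoint union of graphs from a hyperfinite family is still hyperfinite with the same parameters. Hence $\left(Y_n\right)_{n=1}^{\infty}$ is hyperfinite.

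For the sequence $\left(X_n\right)_{n=1}^{\infty}$, I would argue that hyperfiniteness transfers across Benjamini--Schramm convergence. Concretely, by compactness of $\IRS(\FF)$ it suffices to treat subsequences along which the associated IRSs $\mu_n$ converge to some $\mu\in\IRS(\FF)$. Since $d_{\stat}(X_n,Y_n)\to 0$, the sequence of IRSs $\nu_n$ associated with $Y_n$ converges to the same $\mu$ by Lemma \ref{lem:dstat-weakstar}, and since each $\nu_n$ is supported on subgroups containing $\Ker\pi$, so is $\mu$; thus $\mu\in\IRS(\Gamma)$. By Proposition 13 of \cite{agv}, $\mu$ is the IRS of some p.m.p.\ $\Gamma$-space $X$, which is hyperfinite by the Ornstein--Weiss theorem. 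The essentially same removal argument as in Proposition \ref{prop:amenable-schreier-hyperfinite} (using Theorem 1 of \cite{elek2012} on hyperfiniteness being determined by local statistics) then gives that $\left(X_n\right)_{n=1}^{\infty}$ is hyperfinite. Alternatively, one can transfer a hyperfinite witness directly: a separator $Z_n\subset V(Y_n)$ of size $<\epsilon|Y_n|$ leaving components of size $\leq K$ in $Y_n$ can be matched, up to a vanishing defect controlled by $d_{\stat}(X_n,Y_n)$, by a set $Z_n'\subset V(X_n)$ with the analogous property in $X_n$, because the relevant statement is local at radius depending only on $K$.

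With both sequences hyperfinite, $|X_n|=|Y_n|$, and $d_{\stat}(X_n,Y_n)\to 0$, Elek's theorem applies and yields $d_{\gen}(X_n,Y_n)\to 0$. The main obstacle is the hyperfiniteness of $\left(X_n\right)_{n=1}^{\infty}$: the $X_n$ are only $\FF$-sets, not $\Gamma$-sets, so Proposition \ref{prop:amenable-schreier-hyperfinite} does not apply to them directly, and one really does need the fact that hyperfiniteness is stable under Benjamini--Schramm limits (equivalently, detected by local statistics).
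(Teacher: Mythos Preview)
Your proposal is correct, but it does more work than necessary, and the surplus stems from a slight misreading of the Newman--Sohler/Elek theorem. In the formulation the paper uses (Theorem~5 of \cite{elek2012}), only \emph{one} of the two sequences needs to lie in a hyperfinite family: if $\mathbf{G}\subset\mathbf{P}_q$ is hyperfinite, then for every $\epsilon>0$ there is $\delta>0$ such that for every $G_1\in\mathbf{P}_q$ and $G_2\in\mathbf{G}$ with $|V(G_1)|=|V(G_2)|$ and $d_{\stat}(G_1,G_2)<\delta$, one has $d(G_1,G_2)<\epsilon$. The paper therefore only verifies hyperfiniteness of the $\Gamma$-side $(Y_n)$ via Proposition~\ref{prop:amenable-schreier-hyperfinite}, passes through an encoding of $\FF$-actions as undirected bounded-degree graphs (as in the proof of Theorem~9 of \cite{elek2012}), and applies the theorem with $G_1=U(X_n)$ arbitrary and $G_2=U(Y_n)$ in the hyperfinite family.

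Your route---establishing hyperfiniteness of $(X_n)$ as well, by passing to a convergent subsequence, identifying the limit as an IRS of $\Gamma$, realizing it by a p.m.p.\ $\Gamma$-space, and invoking Ornstein--Weiss plus Schramm/Elek---is sound, and indeed recapitulates the proof of Proposition~\ref{prop:amenable-schreier-hyperfinite} with $(X_n)$ in place of the Schreier graphs. But it is redundant once one knows the one-sided statement of the theorem. What your approach buys is robustness: it would still work with a weaker two-sided version of Newman--Sohler. What the paper's approach buys is brevity, and it isolates exactly where amenability enters (only through the $\Gamma$-sets $Y_n$). One small omission in both directions: the passage from directed edge-labelled Schreier graphs to the undirected bounded-degree setting of Newman--Sohler requires an explicit encoding $U$, which the paper spells out and you should too.
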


\begin{proof}
The statement of this proposition is an adaptation of a theorem of
Newman and Sohler (see \cite{newman-sohler2011} and \cite{newman-sohler2013})
from the context of finite undirected
graphs, to the context of group actions on finite sets. We begin by
describing the Newman-Sohler Theorem (see Theorem 5 of \cite{elek2012}
for this formulation, and a different proof). First, we need some
definitions. Fix $q\in\NN$. Let ${\bf P}_{q}$ be the collection
of finite undirected graphs for which the degree of each vertex is
at most $q$. We begin by defining the notion of the statistical distance
between finite undirected graphs. For $H\in{\bf P}_{q}$ and a vertex $h_{0}$
of $H$, we say that $\left(H,h_{0}\right)$ is a pointed graph of
radius $r$ if each vertex $h\in H$ is at distance at most $r$ from
$h_{0}$. Write ${\bf P}_{q,r}$ for the set of pointed graphs $\left(H,h_{0}\right)$
of radius $r$ with $H\in{\bf P}_{q}$. Enumerate the disjoint union
$\coprod_{r\in\NN}{\bf P}_{q,r}$ by $\left\{ H_{i}\right\} _{i=1}^{\infty}$.
For $r\in\NN$, $H_{i}\in{\bf P}_{q,r}$ and $G\in{\bf P}_{q}$, write
$p_{i}\left(G\right)$ for the probability, under a uniformly random
choice of a vertex $v$ of $G$, that the ball of radius $r$, centered
at $v$, is pointed-isomorphic to $H_{i}$. Write $\calL\left(G\right)=\left(p_{i}\left(G\right)\right)_{i=1}^{\infty}\in\left[0,1\right]^{\NN}$.
For $G_{1},G_{2}\in{\bf P}_{q}$, the statistical distance between
$G_{1}$ and $G_{2}$ is defined as $d_{\stat}\left(G_{1},G_{2}\right)=\sum_{i=1}^{\infty}2^{-i}\cdot\left|p_{i}\left(G_{1}\right)-p_{i}\left(G_{2}\right)\right|$.
We now define another notion of distance between graphs in ${\bf P}_{q}$
(the generator-metric $d_{\gen}$ is its analogue in the context of
group actions). For $G_{1},G_{2}\in{\bf P}_{q}$, $n:=\left|V(G_{1}\right)|=\left|V(G_{2})\right|$, and a bijection $f:V\left(G_{1}\right)\rightarrow V\left(G_{2}\right)$, let $Q_f$ be the set of pairs
$\left(v_{1},v_{2}\right)$ of vertices of $G_{1}$, such that $\left(v_{1},v_{2}\right)$
is an edge of $G_{1}$, but $\left(f\left(v_{1}\right),f\left(v_{2}\right)\right)$
is not an edge of $G_{2}$, or vice versa. Let
$\|f\|=\frac{1}{n}\cdot|Q_f|$.
Define $d\left(G_{1},G_{2}\right)$ as the minimum of $\|f\|$, running over all
bijections $\|f\|$ between the vertex sets.

The Newman-Sohler Theorem says that if ${\bf G}\subset{\bf P}_q$ is a hyperfinite
family, then for every $\epsilon>0$, there is $f\left(\epsilon\right)>0$,
such that for every $G_{1}\in{\bf P}_{q}$ and $G_{2}\in{\bf G}$,
if $\left|V\left(G_{1}\right)\right|=\left|V\left(G_{2}\right)\right|$
and $d_{\stat}\left(G_{1},G_{2}\right)<f\left(\epsilon\right)$, then
$d\left(G_{1},G_{2}\right)<\epsilon$. In other words, if $\left(G_{n}^{(1)}\right)_{n=1}^{\infty}$
is a sequence in ${\bf P}_{q}$ and $\left(G_{n}^{(2)}\right)_{n=1}^{\infty}$
is a sequence in the hyperfinite family ${\bf G}$, satisfying $\left|G_{n}^{(1)}\right|=\left|G_{n}^{(2)}\right|$,
then $d_{\stat}\left(G_{n}^{(1)},G_{n}^{(2)}\right)\rightarrow0$ implies
$d\left(G_{n}^{(1)},G_{n}^{(2)}\right)\rightarrow0$.

To adapt the Newman-Sohler Theorem to the context of group actions,
we use a standard encoding of actions of $\FF$ by undirected graphs.
The details of this type of encoding are described, for example, in the
proof of Theorem 9 of \cite{elek2012}: There is $q\in\NN$ and a mapping
$U$ from the set of actions of $\FF$ on finite sets to the set
${\bf P}_{q}$ with the
following properties: (1) If $\left(X_{n}\right)_{n=1}^{\infty}$
and $\left(Y_{n}\right)_{n=1}^{\infty}$ are sequences of finite $\FF$-sets,
then $d_{\stat}\left(X_{n},Y_{n}\right)\rightarrow0$ if and only
if $d_{\stat}\left(U\left(X_{n}\right),U\left(Y_{n}\right)\right)\rightarrow0$,
and (2) if, in addition, $\left|X_{n}\right|=\left|Y_{n}\right|$, then $d_{\gen}\left(X_{n},Y_{n}\right)\rightarrow0$
if and only $d\left(U\left(X_{n}\right),U\left(Y_{n}\right)\right)\rightarrow0$.

The proposition follows at once from the Newman-Sohler Theorem and
the above properties of the encoding function $U$.
\end{proof}

\begin{rem}
The assumption that $\Gamma$ is amenable in Proposition
\ref{prop:our-elek-newman-sohler} is essential.
Indeed, for $d\geq 2$, take $\Gamma=\FF_d$, the free group on $d$ generators.
Then, there are sequences $(\Lambda_n)_{n=1}^{\infty}$ and
$(\Delta_n)_{n=1}^{\infty}$ of finite quotients of $\Gamma$,
$|\Lambda_n|=2\cdot|\Delta_n|$,
giving rise to $2d$-regular Cayley graphs $X_n=\Cay\left(\Lambda_n\right)$
and $Y_n=\Cay\left(\Delta_n\right)$,
such that $(X_n)_{n=1}^{\infty}$
is a family of expander graphs,
and such that the girths of both $X_n$ and $Y_n$ approach
infinity as $n\rightarrow\infty$
(see Theorem 7.3.12 of \cite{lubotzky-expanders}
for examples of families of expander graphs with large girth).
Then, $d_{\stat}\left(X_n,Y_n\coprod Y_n\right)\rightarrow 0$ since for every
radius $r\geq 1$, any ball of radius $r$ in $X_n$ and in $Y_n$ is a tree for large enough $n$. But, since $(Y_n\coprod Y_n)_{n=1}^{\infty}$ is a highly
non-expanding family, $d_{\gen}(X_n,Y_n)$ does not approach $0$ as
$n\rightarrow\infty$
\end{rem}

\section{The main theorem}
\label{sec:main-theorems}
\begin{defn}
A sequence $\left(X_{n}\right)_{n=1}^{\infty}$ of finite $\FF$-sets
is \emph{convergent} if it has a limiting IRS,
i.e. the sequence $\left(\mu_n\right)_{n=1}^{\infty}$ of IRSs associated with
$\left(X_{n}\right)_{n=1}^{\infty}$ converges in $\IRS(\FF)$.
\end{defn}
\begin{defn}
A sequence $\left(X_{n}\right)_{n=1}^{\infty}$ of finite $\FF$-sets
is a \emph{stability-challenge for $\Gamma$} if $\Pr_{x\in X_{n}}\left(w\cdot x=x\right)\rightarrow1$
for each $w\in\Ker\pi$.
\end{defn}
Note that $\IRS(\FF)$ is compact, and so every stability-challenge for $\Gamma$ has
a convergent subsequence.
\begin{defn}
Let $\left(X_{n}\right)_{n=1}^{\infty}$ be a stability-challenge for $\Gamma$.
Then, a sequence $\left(Y_{n}\right)_{n=1}^{\infty}$ of finite $\Gamma$-sets,
satisfying $\left|X_n\right|=\left|Y_n\right|$, is:
\begin{enumerate}[label=(\roman*)]
\item
a \emph{solution}
for $\left(X_{n}\right)_{n=1}^{\infty}$, if
$d_{\gen}\left(X_n,Y_n\right)\overset{n\rightarrow\infty}{\longrightarrow}0$.
\item
a \emph{statistical-solution}
for $\left(X_{n}\right)_{n=1}^{\infty}$, if
$d_{\stat}\left(X_n,Y_n\right)\overset{n\rightarrow\infty}{\longrightarrow}0$.
\end{enumerate}
\end{defn}
By Proposition \ref{prop:dgen-dstat}, if $\left(X_{n}\right)_{n=1}^{\infty}$
is a stability-challenge for $\Gamma$, then any solution for
$\left(X_{n}\right)_{n=1}^{\infty}$ is a statistical-solution.
By Proposition \ref{prop:our-elek-newman-sohler}, the converse holds as well if $\Gamma$ is amenable.

Note that $\Gamma$ is P-stable (Definition \ref{def:stable-group})
if and only if every stability-challenge for $\Gamma$
has a solution. In fact, it suffices to consider \emph{convergent}
stability-challenges:
\begin{lem}
\label{lem:stability-by-convergent-solutions}
The group $\Gamma$ is P-stable if and only if every convergent stability-challenge
for $\Gamma$ has a solution.
\end{lem}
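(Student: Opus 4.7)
My plan is to prove both directions by connecting the lemma to the (immediately preceding) observation that $\Gamma$ is P-stable iff every stability-challenge for $\Gamma$ has a solution. The $\Rightarrow$ direction is then trivial: every convergent stability-challenge is a stability-challenge, so if all stability-challenges have solutions, so do the convergent ones. The real content is in $\Leftarrow$.

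For the $\Leftarrow$ direction, I would argue by contradiction. Assume every convergent stability-challenge admits a solution, but that $\Gamma$ is not P-stable, i.e.\ some stability-challenge $\left(X_n\right)_{n=1}^{\infty}$ fails to have a solution. Unwinding the definition of a solution, this means there exist $\epsilon>0$ and a subsequence $\left(X_{n_k}\right)_{k=1}^{\infty}$ such that, for every $k$ and every finite $\Gamma$-set $Y$ with $\left|Y\right|=\left|X_{n_k}\right|$, we have $d_{\gen}\left(X_{n_k},Y\right)>\epsilon$.

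The key step is then to apply compactness of $\IRS(\FF)$ to the associated IRSs $\mu_{n_k}$ of $X_{n_k}$, producing a weak-$*$ convergent sub-subsequence $\mu_{n_{k_j}}\rightarrow\mu$. The corresponding sub-subsequence $\left(X_{n_{k_j}}\right)_{j=1}^{\infty}$ is then a convergent stability-challenge for $\Gamma$ (being a subsequence of a stability-challenge, the defining condition $\Pr_{x\in X_{n_{k_j}}}\left(w\cdot x=x\right)\rightarrow 1$ for $w\in\Ker\pi$ is inherited). By hypothesis, this convergent stability-challenge has a solution, i.e.\ finite $\Gamma$-sets $Y_j$ with $\left|Y_j\right|=\left|X_{n_{k_j}}\right|$ and $d_{\gen}\left(X_{n_{k_j}},Y_j\right)\rightarrow 0$. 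This directly contradicts the lower bound $d_{\gen}\left(X_{n_{k_j}},Y_j\right)>\epsilon$ inherited from the chosen subsequence, completing the proof.

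There is no serious obstacle here: the only nontrivial ingredient is compactness of $\IRS(\FF)$ (already recorded in Section~\ref{sec:IRS} via Riesz-Markov and Banach-Alaoglu), together with the trivial observation that the stability-challenge property passes to subsequences. The lemma is essentially a soft compactness reduction allowing one, in later arguments, to assume without loss of generality that any problematic stability-challenge comes with a well-defined limiting IRS.
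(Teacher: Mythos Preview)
Your proof is correct and follows essentially the same approach as the paper: negate P-stability, extract a stability-challenge whose terms are uniformly $\epsilon$-far from any $\Gamma$-set, and then pass to a convergent subsequence using compactness of $\IRS(\FF)$ to obtain a convergent stability-challenge with no solution. The only cosmetic difference is that the paper negates Definition~\ref{def:stable-group} directly (choosing $X_n$ to be a $(1/n,\Ker\pi\cap B_{\FF}(n))$-almost-$\Gamma$-set that is $\epsilon$-far from every $\Gamma$-set), whereas you first invoke the preceding observation that P-stability is equivalent to every stability-challenge having a solution and then unwind ``no solution''; both routes arrive at the same subsequence argument.
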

\begin{proof}
We only need to prove the ``if'' direction.
Assume that $\Gamma$ is not P-stable.
We would like to show that $\Gamma$ has a convergent stability-challenge
which does not have a solution.
Take $\epsilon>0$ and a sequence $\left(X_{n}\right)_{n=1}^{\infty}$
such that $X_{n}$ is a $\left(\delta_n,E_n\right)$-almost-$\Gamma$-set
for $\delta_n=\frac{1}{n}$ and $E_n=\Ker\pi\cap B_{\FF}\left(n\right)$,
but there is no $\Gamma$-set $Y_{n}$
for which $d_{\gen}\left(X_{n},Y_{n}\right)<\epsilon$.
Then, every subsequence of $\left(X_{n}\right)_{n=1}^{\infty}$
is a stability-challenge
for $\Gamma$ which has no solution.
Since $\IRS(\FF)$ is compact,
$\left(X_{n}\right)_{n=1}^{\infty}$ has a subsequence which is a convergent
stability-challenge for $\Gamma$ with no solution.
\end{proof}
\begin{lem}
\label{lem:challenge-vs-limit-in-gamma}
Let $\left(X_{n}\right)_{n=1}^{\infty}$ be a convergent
sequence of finite $\FF$-sets,
and write $\mu\in\IRS(\FF)$ for its limiting IRS.
Then, $\left(X_{n}\right)_{n=1}^{\infty}$ is a stability-challenge for $\Gamma$
if and only if $\mu\in\IRS(\Gamma)$.
\end{lem}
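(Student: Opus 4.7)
The plan is to relate the two conditions directly through the fact that the stabilizer-map characterization of the IRS $\mu_n$ of $X_n$ makes $\mu_n(C_w)$ equal to the probability that $w$ fixes a uniformly random point of $X_n$. Specifically, for any $w \in \FF$,
\[
\mu_n(C_w) = \Pr_{x \in X_n}\bigl(\Stab_{\FF}(x) \ni w\bigr) = \Pr_{x \in X_n}(w \cdot x = x).
\]
So the stability-challenge condition is literally $\mu_n(C_w) \to 1$ for every $w \in \Ker\pi$, and the condition $\mu \in \IRS(\Gamma)$ unwraps (via the identification described at the end of Section~\ref{sec:IRS}) as $\mu(C_w) = 1$ for every $w \in \Ker\pi$.

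Next I would use that each $C_w$ is clopen in $\Sub(\FF)$, so ${\bf 1}_{C_w}$ is a continuous function on $\Sub(\FF)$. Weak-$*$ convergence $\mu_n \to \mu$ therefore gives $\mu_n(C_w) \to \mu(C_w)$ for every $w \in \FF$. Combining this with the above reformulation, the forward direction is immediate: if $\mu_n(C_w) \to 1$ for $w \in \Ker\pi$, then $\mu(C_w) = 1$ for such $w$, and since $\FF$ is countable (hence $\Ker\pi$ is countable) we may intersect the full-measure sets $C_w$ over $w \in \Ker\pi$ to conclude that $\mu$-a.e.\ subgroup contains $\Ker\pi$, i.e., $\mu \in \IRS(\Gamma)$. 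The reverse direction is equally immediate: if $\mu \in \IRS(\Gamma)$, then $\mu(C_w) = 1$ for each $w \in \Ker\pi$, so $\mu_n(C_w) \to 1$, which is the stability-challenge condition.

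There is no real obstacle here; the only mild subtlety is noting that one needs countably many full-measure conditions to be intersected simultaneously to deduce that $\mu$ is supported on $\{H \leq \FF \mid \Ker\pi \subseteq H\}$, which is justified by the countability of $\FF$ (since $\FF$ is finitely generated). Thus the proof reduces to two lines once one observes the identification $\mu_n(C_w) = \Pr_{x \in X_n}(w \cdot x = x)$ and invokes clopenness of $C_w$.
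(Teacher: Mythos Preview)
Your proof is correct and follows essentially the same approach as the paper's: both identify $\mu_n(C_w)=\Pr_{x\in X_n}(w\cdot x=x)$, use clopenness of $C_w$ to pass to the limit via weak-$*$ convergence, and then intersect the countably many full-measure sets $C_w$ for $w\in\Ker\pi$ to conclude $\mu\in\IRS(\Gamma)$. The paper's argument is slightly more terse but otherwise identical in structure.
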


\begin{proof}
Let $\left(\mu_{n}\right)_{n=1}^{\infty}$ be the sequence of IRSs
associated with $\left(X_{n}\right)_{n=1}^{\infty}$.
By definition, $\mu_n\rightarrow\mu$.
Then, $\left(\mu_{n}\right)_{n=1}^{\infty}$ is a stability-challenge for $\Gamma$
if and only if $\mu_n\left(C_w\right)\rightarrow 1$ for each $w\in\Ker\pi$.
If $\mu\in\IRS(\Gamma)$, then $\mu_n\left(C_w\right)\rightarrow\mu\left(C_w\right)=1$ for every $w\in\Ker\pi$,
which says that $\left(\mu_{n}\right)_{n=1}^{\infty}$
is a stability-challenge for $\Gamma$.
In the other direction, if $\left(\mu_{n}\right)_{n=1}^{\infty}$ is a
stability-challenge for $\Gamma$, then for every $w\in\Ker\pi$,
$\mu_{n}\left(C_{w}\right)\rightarrow1$,
but $\mu_{n}\left(C_{w}\right)\rightarrow\mu\left(C_{w}\right)$,
forcing $\mu\left(C_{w}\right)=1$. Therefore,
\[
\mu\left(\left\{ H\leq\FF\mid\Ker\pi\leq H\right\} \right)=\mu\left(\cap_{w\in\Ker\pi}C_{w}\right)=1\text{,}
\]
and so $\mu\in\IRS(\Gamma)$.
\end{proof}
The proof of the following lemma is an adaptation of the argument
in the proof of Proposition 6.1 of \cite{arzhantseva-paunescu}.
\begin{lem}
\label{lem:amplification-sparsification}Let $\left(X_{n}\right)_{n=1}^{\infty}$
be a sequence of finite $\Gamma$-sets satisfying
$\left|X_{n}\right|\rightarrow\infty$,
with associated sequence of IRSs
$\left(\mu_{n}\right)_{n=1}^{\infty}$,
satisfying $\mu_{n}\rightarrow\mu$
for some $\mu\in\IRS(\Gamma)$.
Let $\left(m_{k}\right)_{k=1}^{\infty}$ be a sequence of positive
integers satisfying $m_{k}\rightarrow\infty$.
Then, there is a sequence
$\left(Y_{k}\right)_{k=1}^{\infty}$
of $\Gamma$-sets, satisfying $\left|Y_{k}\right|=m_{k}$, with associated
IRS sequence $\left(\nu_{k}\right)_{k=1}^{\infty}$,
such that $\nu_{k}\rightarrow\mu$ as well.
\end{lem}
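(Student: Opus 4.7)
The plan is to build each $Y_k$ as a disjoint union of many identical copies of one carefully chosen $X_{n(k)}$, together with a small ``remainder'' $\Gamma$-set $T_k$ used only to correct the cardinality. Given $n(k)$, write the Euclidean division $m_k = q_k \cdot |X_{n(k)}| + r_k$ with $0 \leq r_k < |X_{n(k)}|$, pick any finite $\Gamma$-set $T_k$ of cardinality $r_k$ (e.g.\ $r_k$ points on which $\Gamma$ acts trivially), and set $Y_k = X_{n(k)}^{\coprod q_k} \coprod T_k$. Then $|Y_k| = m_k$, and since the associated IRS depends only on the distribution of the stabilizer of a uniformly random point, one reads off directly
\[
\nu_k \;=\; \frac{q_k\, |X_{n(k)}|}{m_k}\, \mu_{n(k)} \;+\; \frac{r_k}{m_k}\, \lambda_k,
\]
where $\lambda_k \in \IRS(\Gamma)$ is the IRS associated with $T_k$.

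The main obstacle, and the only step that requires any thought, is the choice of the index $n(k)$. Two requirements pull against each other: I need $n(k) \to \infty$ in order to guarantee $\mu_{n(k)} \to \mu$, but I also need $|X_{n(k)}|/m_k \to 0$ so that the coefficient of $\mu_{n(k)}$ in $\nu_k$ tends to $1$ and the contribution of $\lambda_k$ becomes negligible. A diagonal selection reconciles them: since $|X_n| \to \infty$, the set $\{\, n : |X_n|^2 \leq m_k \,\}$ is finite for each $k$, and since $m_k \to \infty$, it contains every fixed $n_0$ for all sufficiently large $k$. Hence for $k$ large enough I set
\[
n(k) \;=\; \max \bigl\{\, n \;:\; |X_n|^2 \leq m_k \,\bigr\},
\]
which simultaneously forces $n(k) \to \infty$ and $|X_{n(k)}| \leq \sqrt{m_k}$, so that $|X_{n(k)}|/m_k \to 0$.

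With these two properties in hand, the weak-$*$ convergence $\nu_k \to \mu$ follows routinely: for any continuous $f \colon \Sub(\Gamma) \to \RR$, the first summand of $\int f \, d\nu_k$ tends to $\int f \, d\mu$ because its coefficient approaches $1$ and $\int f\, d\mu_{n(k)} \to \int f\, d\mu$, while the second summand vanishes since $r_k/m_k \leq |X_{n(k)}|/m_k \to 0$ and $\bigl|\int f\, d\lambda_k\bigr|$ is uniformly bounded by $\|f\|_\infty$. No use is made of any special structure of $T_k$ beyond its size, which reflects the fact that the whole content of the lemma is an ``amplification-plus-sparsification'' trick whose only delicate ingredient is the diagonal choice of $n(k)$.
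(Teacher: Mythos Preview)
Your proof is correct and follows essentially the same approach as the paper's: build $Y_k$ as $q_k$ disjoint copies of a well-chosen $X_{n(k)}$ together with a trivially-acting remainder of size $r_k$, and then argue that the associated IRS is a convex combination $\tfrac{q_k|X_{n(k)}|}{m_k}\mu_{n(k)} + \tfrac{r_k}{m_k}\lambda_k$ whose first coefficient tends to $1$. The only cosmetic difference is in the bookkeeping for selecting $n(k)$: the paper fixes an increasing sequence of thresholds $(i_n)$ so that $|X_n|/m_k < 1/n$ whenever $i_n \le k < i_{n+1}$, whereas you use the diagonal rule $n(k)=\max\{\,n:|X_n|^2\le m_k\,\}$; both devices achieve exactly the two needed properties $n(k)\to\infty$ and $|X_{n(k)}|/m_k\to 0$.
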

\begin{proof}
For an integer $r\geq0$, write $Z_{r}$ for the $\Gamma$-set on $r$
points on which $\Gamma$ acts trivially. Take an increasing sequence $\left(i_{n}\right)_{n=1}^{\infty}$
of positive integers such that for every $n\in\NN$ and $i_{n}\leq k<i_{n+1}$,
$\frac{\left|X_{n}\right|}{m_{k}}<\frac{1}{n}$. For
each $k\geq i_1$, take the unique $n\in\NN$ for which $i_{n}\leq k<i_{n+1}$,
write $m_{k}=q_{k}\cdot\left|X_{n}\right|+r_{k}$ for integers $q_{k}\geq n$
and $0\leq r_{k}<\left|X_{n}\right|,$ and let $Y_{k}=\left(X_{n}\right)^{\coprod q_{k}}\coprod Z_{r_{k}}$.
So, $\left|Y_{k}\right|=m_{k}$.
For $1\leq k<i_{1}$, define $Y_{k}=Z_{m_k}$.
Let $\left(\nu_{k}\right)_{k=1}^{\infty}$
be the sequence of IRSs associated with $\left(Y_{k}\right)_{k=1}^{\infty}$.
We would like to show that $\nu_{k}\rightarrow\mu$. Take a continuous
function $f\colon\Sub(\Gamma)\rightarrow\RR$. Let $n\in\NN$
and $i_{n}\leq k<i_{n+1}$. Then,
\begin{align*}
\left|\int fd\nu_{k}-\int fd\mu_{n}\right|= & \left|\frac{1}{m_{k}}\cdot\left(q_{k}\cdot\left|X_{n}\right|\cdot\int fd\mu_{n}+r_{k}\cdot f(\Gamma)\right)-\int fd\mu_{n}\right|\\
= & \frac{1}{m_{k}}\cdot\left|\left(q_{k}\cdot\left|X_{n}\right|-m_{k}\right)\cdot\int fd\mu_{n}+r_{k}\cdot f(\Gamma)\right|\\
= & \frac{r_{k}}{m_{k}}\cdot\left|-\int fd\mu_{n}+f(\Gamma)\right|\\
< & \frac{2}{n}\cdot\|f\|_{\infty}
\end{align*}
Therefore, $\int fd\nu_{k}\rightarrow\int fd\mu$, and so $v_{k}\rightarrow\mu$.
\end{proof}

\begin{defn}
\label{def:co-sofic-vs-challenge}
Let $\left(X_{n}\right)_{n=1}^{\infty}$ be a convergent stability-challenge for
$\Gamma$ whose limiting IRS is $\mu\in\IRS(\Gamma)$.
Then, $\left(X_{n}\right)_{n=1}^{\infty}$ is \emph{co-sofic} if $\mu$ is
co-sofic.
\end{defn}

\begin{lem}
\label{lem:co-sofic-vs-stat-solvable}
Let $\left(X_{n}\right)_{n=1}^{\infty}$ be a convergent stability-challenge for
$\Gamma$.
Then, $\left(X_{n}\right)_{n=1}^{\infty}$ is co-sofic if and only if
$\left(X_{n}\right)_{n=1}^{\infty}$ has a statistical-solution.
\end{lem}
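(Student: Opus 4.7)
The plan is to recast both sides of the equivalence as statements about convergence in $\IRS(\Gamma)$, using Lemma \ref{lem:dstat-weakstar} as the bridge between Benjamini--Schramm convergence and weak-$\ast$ convergence of the associated IRSs. Let $\mu_n$ denote the IRS associated with $X_n$; by hypothesis $\mu_n\to\mu$, and Lemma \ref{lem:challenge-vs-limit-in-gamma} guarantees $\mu\in\IRS(\Gamma)$. The content of the proposition is then: $\mu$ is co-sofic if and only if $(X_n)_{n=1}^{\infty}$ admits a statistical-solution.

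For the $(\Leftarrow)$ direction, suppose $(Y_n)_{n=1}^{\infty}$ is a statistical-solution with associated IRS sequence $(\nu_n)_{n=1}^{\infty}$. Because each $Y_n$ is a finite $\Gamma$-set, each $\nu_n$ is a finite-index IRS lying in $\IRS(\Gamma)$. Since $\mu_n\to\mu$ and $d_{\stat}(X_n,Y_n)\to 0$, Lemma \ref{lem:dstat-weakstar}(i) forces $\nu_n\to\mu$, exhibiting $\mu$ as a weak-$\ast$ limit of finite-index IRSs in $\IRS(\Gamma)$; thus $\mu$ is co-sofic.

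For the $(\Rightarrow)$ direction, assume $\mu$ is co-sofic. Applying Lemma \ref{lem:approximation-of-cosofic-by-actions} I would first obtain a sequence $(Z_k)_{k=1}^{\infty}$ of finite $\Gamma$-sets whose associated IRSs converge to $\mu$; replacing $Z_k$ by $Z_k^{\coprod k}$ preserves the IRS while ensuring $|Z_k|\to\infty$. Provided $|X_n|\to\infty$, Lemma \ref{lem:amplification-sparsification} applied to $(Z_k)$ with cardinality sequence $m_n=|X_n|$ produces $\Gamma$-sets $Y_n$ with $|Y_n|=|X_n|$ and associated IRSs $\nu_n\to\mu$. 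Since both $\mu_n$ and $\nu_n$ converge to $\mu$, Lemma \ref{lem:dstat-weakstar}(ii) yields $d_{\stat}(X_n,Y_n)\to 0$, so $(Y_n)_{n=1}^{\infty}$ is a statistical-solution.

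The only remaining subtlety is the case where $|X_n|$ does not tend to infinity. I would resolve this by partitioning $\NN=I_1\sqcup I_2$, where $I_1$ collects those $n$ for which the isomorphism class of $X_n$ as an $\FF$-set occurs infinitely often in $(X_m)_{m=1}^{\infty}$, and $I_2$ collects the rest. For $n\in I_1$, the constancy along an infinite sub-sequence combined with the stability-challenge condition forces the corresponding fixed $\FF$-set to be a genuine $\Gamma$-set, so I may take $Y_n=X_n$ and get $\nu_n=\mu_n$ on $I_1$. For $n\in I_2$, the finiteness of iso-classes of $\FF$-sets of each bounded cardinality forces $|X_n|\to\infty$ along $I_2$, so the previous paragraph's construction applies. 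A routine subsequence argument patches the two partial constructions into a single $(Y_n)$ with $\nu_n\to\mu$ along the full sequence, and Lemma \ref{lem:dstat-weakstar}(ii) finishes the proof. The main technical hurdle is matching cardinalities in the $(\Rightarrow)$ direction, which is precisely the service performed by Lemma \ref{lem:amplification-sparsification}.
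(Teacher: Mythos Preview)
Your proof is correct and follows essentially the same route as the paper: both directions are handled via Lemma~\ref{lem:dstat-weakstar}, with Lemma~\ref{lem:approximation-of-cosofic-by-actions} and Lemma~\ref{lem:amplification-sparsification} used in the $(\Rightarrow)$ direction to produce $\Gamma$-sets of the correct cardinalities. The paper's proof simply invokes Lemma~\ref{lem:amplification-sparsification} to assume $|X_n|=|Y_n|$ without checking the hypothesis $m_k\to\infty$, whereas you carefully dispose of the edge case where $|X_n|\not\to\infty$; your handling of that case (and the replacement $Z_k\mapsto Z_k^{\coprod k}$) is sound and fills a small gap the paper glosses over.
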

\begin{proof}
Write $\mu\in\IRS(\Gamma)$
for the limiting IRS of $\left(X_{n}\right)_{n=1}^{\infty}$.
Assume that $\left(X_{n}\right)_{n=1}^{\infty}$ is co-sofic.
By Lemma \ref{lem:approximation-of-cosofic-by-actions}, there is a sequence
$\left(Y_{n}\right)_{n=1}^{\infty}$ of finite $\Gamma$-sets whose associated
sequence of IRSs converges to $\mu$.
By Lemma \ref{lem:amplification-sparsification}, we may assume that
$\left|X_n\right|=\left|Y_n\right|$ for all $n\in\NN$.
Then, by Lemma \ref{lem:dstat-weakstar}(ii), $\left(Y_{n}\right)_{n=1}^{\infty}$
is a statistical-solution for $\left(X_{n}\right)_{n=1}^{\infty}$.

In the other direction, assume that $\left(X_{n}\right)_{n=1}^{\infty}$ has a
statistical-solution $\left(Y_{n}\right)_{n=1}^{\infty}$.
Write $\left(\nu_n\right)_{n=1}^{\infty}$ for the sequence of IRSs associated
with $\left(Y_{n}\right)_{n=1}^{\infty}$.
Then, $\left(\nu_n\right)_{n=1}^{\infty}$ is a sequence of finite-index IRSs
in $\IRS(\Gamma)$, which, by Lemma \ref{lem:dstat-weakstar}(i), converges
to $\mu$, and so $\left(X_{n}\right)_{n=1}^{\infty}$ is co-sofic.
\end{proof}

\begin{lem}
\label{lem:F-co-sofic-gives-challenge}
Let $\mu\in\IRS(\Gamma)$ and assume that $\mu$ is co-sofic in $\FF$.
Then, there is a convergent stability-challenge for $\Gamma$
whose limiting IRS is $\mu$.
\end{lem}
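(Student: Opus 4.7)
The plan is to directly read off the desired sequence from the definition of co-soficity in $\FF$, and then verify the stability-challenge condition via weak-$*$ convergence applied to the clopen sets $C_w$.

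Concretely, since $\mu\in\IRS(\Gamma)\subset\IRS(\FF)$ is co-sofic in $\FF$, Lemma \ref{lem:approximation-of-cosofic-by-actions} applied to $\FF$ (viewed as a quotient of itself via the identity map) yields a sequence $(X_n)_{n=1}^{\infty}$ of finite $\FF$-sets whose associated sequence of IRSs $(\mu_n)_{n=1}^{\infty}$ in $\IRS(\FF)$ converges weakly-$*$ to $\mu$. By construction $(X_n)$ is convergent with limiting IRS $\mu$, so it remains only to check that it is a stability-challenge for $\Gamma$.

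For this, fix $w\in\Ker\pi$ and recall the clopen set $C_w=\{H\leq\FF\mid w\in H\}\subset\Sub(\FF)$. Unpacking the definition of the IRS associated with a finite $\FF$-set, we have
\[
\Pr_{x\in X_n}\left(w\cdot x=x\right)=\Pr_{x\in X_n}\left(w\in\Stab_{\FF}(x)\right)=\mu_n\left(C_w\right).
\]
Since $\mathbf{1}_{C_w}$ is continuous on $\Sub(\FF)$, weak-$*$ convergence gives $\mu_n(C_w)\to\mu(C_w)$. On the other hand, the identification of $\IRS(\Gamma)$ with the subspace of $\IRS(\FF)$ supported on $\{H\leq\FF\mid\Ker\pi\leq H\}$ implies that $\mu(C_w)=1$ for every $w\in\Ker\pi$. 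Hence $\Pr_{x\in X_n}(w\cdot x=x)\to 1$, which is precisely the stability-challenge condition.

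There is no real obstacle here: the content of the lemma is essentially a translation between the measure-theoretic condition ``$\mu$ is supported on subgroups containing $\Ker\pi$'' and the combinatorial condition ``$w$ almost fixes $X_n$ for each $w\in\Ker\pi$'', using that the relevant sets $C_w$ are clopen. The only care required is to remember that we are working inside $\IRS(\FF)$ (not $\IRS(\Gamma)$), so that the approximating finite-index atomic IRSs $\mu_n$ need not themselves lie in $\IRS(\Gamma)$; it is merely their limit $\mu$ that does, and this is exactly what forces the almost-fixing behaviour in the limit.
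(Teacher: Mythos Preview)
Your proof is correct and follows essentially the same route as the paper: both apply Lemma~\ref{lem:approximation-of-cosofic-by-actions} to $\FF$ to obtain the sequence $(X_n)$, and then use that $\mu\in\IRS(\Gamma)$ to conclude it is a stability-challenge. The only cosmetic difference is that the paper cites Lemma~\ref{lem:challenge-vs-limit-in-gamma} for the last step, whereas you unpack that lemma's argument (the computation $\Pr_{x\in X_n}(w\cdot x=x)=\mu_n(C_w)\to\mu(C_w)=1$) directly.
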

\begin{proof}
By Lemma \ref{lem:approximation-of-cosofic-by-actions} applied to $\FF$
(rather than $\Gamma$), there is a sequence
$\left(X_{n}\right)_{n=1}^{\infty}$ of finite $\FF$-sets
whose associated sequence of IRSs converges to $\mu$.
But $\mu\in\IRS(\Gamma)$, and so, by Lemma
\ref{lem:challenge-vs-limit-in-gamma}, $\left(X_{n}\right)_{n=1}^{\infty}$ is a
stability-challenge for $\Gamma$.
\end{proof}

The following theorem proves Theorem \ref{thm:intro-main} of the introduction.
\begin{thm}
\label{thm:main-theorem}
~
\begin{enumerate}[label=(\roman*)]
\item
Assume that $\Gamma$ is P-stable
and $\mu\in\IRS(\Gamma)$ is co-sofic in $\FF$.
Then, $\mu$ is co-sofic in $\Gamma$.
\item
Assume that $\Gamma$ is amenable.
Then, $\Gamma$ is P-stable if and only if every $\mu\in\IRS(\Gamma)$
is co-sofic in $\Gamma$.
\end{enumerate}
\end{thm}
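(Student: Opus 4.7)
The plan is to assemble the lemmas already proved in this section (and the preceding one), with essentially no new ingredient beyond a careful routing of the definitions.

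For part (i), I would start with $\mu\in\IRS(\Gamma)$ co-sofic in $\FF$ and feed it into Lemma \ref{lem:F-co-sofic-gives-challenge} to produce a convergent stability-challenge $\left(X_n\right)_{n=1}^{\infty}$ for $\Gamma$ whose limiting IRS is $\mu$. Since $\Gamma$ is P-stable, Lemma \ref{lem:stability-by-convergent-solutions} supplies a solution $\left(Y_n\right)_{n=1}^{\infty}$, i.e.\ a sequence of finite $\Gamma$-sets with $\left|X_n\right|=\left|Y_n\right|$ and $d_{\gen}\left(X_n,Y_n\right)\to 0$. Proposition \ref{prop:dgen-dstat} upgrades this to $d_{\stat}\left(X_n,Y_n\right)\to 0$, so $\left(Y_n\right)_{n=1}^{\infty}$ is in particular a statistical-solution. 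Lemma \ref{lem:co-sofic-vs-stat-solvable} then tells us that $\left(X_n\right)_{n=1}^{\infty}$ is co-sofic, which by Definition \ref{def:co-sofic-vs-challenge} means exactly that $\mu$ is co-sofic in $\Gamma$.

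For the forward direction of (ii), under the assumption that $\Gamma$ is amenable and P-stable, Proposition \ref{prop:amenable-co-sofic} gives co-soficity in $\FF$ of every $\mu\in\IRS(\Gamma)$ for free, and then part (i) immediately upgrades this to co-soficity in $\Gamma$. For the backward direction, by Lemma \ref{lem:stability-by-convergent-solutions} it suffices to solve every convergent stability-challenge $\left(X_n\right)_{n=1}^{\infty}$ for $\Gamma$. Its limiting IRS $\mu$ lies in $\IRS(\Gamma)$ by Lemma \ref{lem:challenge-vs-limit-in-gamma}; by hypothesis it is co-sofic in $\Gamma$, so Lemma \ref{lem:co-sofic-vs-stat-solvable} produces a statistical-solution $\left(Y_n\right)_{n=1}^{\infty}$ consisting of finite $\Gamma$-sets of the correct cardinalities.

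The crux of the backward direction is now the passage from a statistical-solution to an actual solution. This is precisely where the amenability of $\Gamma$ enters: Proposition \ref{prop:our-elek-newman-sohler} asserts that if the target sequence $\left(Y_n\right)_{n=1}^{\infty}$ consists of $\Gamma$-sets (and $\Gamma$ is amenable), then $d_{\stat}\left(X_n,Y_n\right)\to 0$ upgrades to $d_{\gen}\left(X_n,Y_n\right)\to 0$, so $\left(Y_n\right)_{n=1}^{\infty}$ is the desired solution. The step I expect to rely on most heavily is this $\stat$-to-$\gen$ conversion, which packages the Newman--Sohler/Elek hyperfiniteness machinery; without amenability, as noted in the remark following Proposition \ref{prop:our-elek-newman-sohler}, the conversion genuinely fails. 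Modulo that already-established black box, the theorem is now a short diagram-chase through the lemmas above.
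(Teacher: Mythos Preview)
Your proposal is correct and follows essentially the same route as the paper's own proof: build a convergent stability-challenge from $\mu$ via Lemma~\ref{lem:F-co-sofic-gives-challenge}, use P-stability to obtain a (statistical) solution, and close with Lemma~\ref{lem:co-sofic-vs-stat-solvable} for (i); then combine Proposition~\ref{prop:amenable-co-sofic} with (i) for the forward direction of (ii), and Lemma~\ref{lem:co-sofic-vs-stat-solvable} plus Proposition~\ref{prop:our-elek-newman-sohler} for the backward direction. The only cosmetic difference is that you spell out a few implications (e.g.\ Proposition~\ref{prop:dgen-dstat} and Lemma~\ref{lem:challenge-vs-limit-in-gamma}) that the paper leaves implicit, and in (i) you cite Lemma~\ref{lem:stability-by-convergent-solutions} where the paper just invokes the definition of P-stability directly; neither affects correctness.
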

\begin{proof}
(i) By Lemma \ref{lem:F-co-sofic-gives-challenge}, there is a convergent
stability-challenge
$\left(X_{n}\right)_{n=1}^{\infty}$ for $\Gamma$ whose limiting IRS is $\mu$.
Then, $\left(X_{n}\right)_{n=1}^{\infty}$ has a solution, a fortiori, it has a
statistical solution. Thus, by Lemma \ref{lem:co-sofic-vs-stat-solvable},
$\left(X_{n}\right)_{n=1}^{\infty}$ is co-sofic, i.e., $\mu$ is co-sofic in
$\Gamma$.

(ii) Assume that $\Gamma$ is P-stable. Let $\mu\in\IRS(\Gamma)$.
By Proposition \ref{prop:amenable-co-sofic}, $\mu$ is co-sofic in $\FF$.
Hence, by (i), $\mu$ is co-sofic in $\Gamma$.

In the other direction, assume that every
$\mu\in\IRS(\Gamma)$ is co-sofic in $\Gamma$.
Let $\left(X_{n}\right)_{n=1}^{\infty}$ be a convergent stability-challenge for
$\Gamma$.
Then, $\left(X_{n}\right)_{n=1}^{\infty}$ is co-sofic, and so by Lemma
\ref{lem:co-sofic-vs-stat-solvable},
it has a statistical-solution $\left(Y_{n}\right)_{n=1}^{\infty}$.
By Proposition \ref{prop:our-elek-newman-sohler},
$\left(Y_{n}\right)_{n=1}^{\infty}$ is, in fact, 
a solution for $\left(X_{n}\right)_{n=1}^{\infty}$,
and so $\Gamma$ is P-stable by
Lemma \ref{lem:stability-by-convergent-solutions}.
\end{proof}

\section{Applications of the main theorems}
\label{sec:applications}
In this section, we give several applications of the results of Section
\ref{sec:main-theorems}, and in particular prove Theorem
\ref{thm:intro-examples}.

The next proposition proves Theorem \ref{thm:application-positive}.

\begin{prop}
\label{prop:positive-application}
Assume that $\Sub(\Gamma)$ is countable,
and that every almost-normal subgroup of $\Gamma$ is profinitely-closed.
Then, every $\mu\in\IRS(\Gamma)$ is co-sofic. If, further,
$\Gamma$ is amenable, then $\Gamma$ is P-stable.
\end{prop}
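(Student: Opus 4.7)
The plan is as follows. The second assertion follows at once from the first together with Theorem \ref{thm:main-theorem}(ii), so I focus on showing that every $\mu\in\IRS(\Gamma)$ is co-sofic in $\Gamma$. Since $\Sub(\Gamma)$ is countable, $\mu$ is purely atomic and, as noted in Section \ref{sec:IRS}, all its atoms are almost-normal subgroups. By the approximation of atomic IRSs that precedes Lemma \ref{lem:approximation-of-cosofic-by-actions}, $\mu$ is a weak-$*$ limit of finitely-supported atomic IRSs with support inside $\supp(\mu)$, so it suffices to establish co-soficity of a single IRS
\[
\mu_0=\sum_{i=1}^{k}\frac{m_i}{|H_i^{\Gamma}|}\sum_{K\in H_i^{\Gamma}}\delta_K,
\]
where $H_1,\dotsc,H_k$ are pairwise non-conjugate almost-normal subgroups of $\Gamma$, each profinitely-closed by hypothesis, and $\sum_i m_i=1$.

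Since the weak-$*$ topology on $\IRS(\Gamma)$ is generated by the finite partitions $\calP_r$, it will be enough to produce, for every $r\geq 1$, a finite-index IRS $\nu_r\in\IRS(\Gamma)$ with $\nu_r(C_{r,W})=\mu_0(C_{r,W})$ for every $W\subset B_{\Gamma}(r)$. The idea is to replace each $H_i$ by a common \emph{thickening} $K_i:=H_iN$, where $N$ is a single normal finite-index subgroup of $\Gamma$ chosen so that, writing $g_{i,1},\dotsc,g_{i,t_i}$ for coset representatives of $N_{\Gamma}(H_i)$ in $\Gamma$: (a) for every $i,j$, one has $g_{i,j}H_ig_{i,j}^{-1}\cdot N\cap B_{\Gamma}(r)=g_{i,j}H_ig_{i,j}^{-1}\cap B_{\Gamma}(r)$, and (b) all the subgroups $\{g_{i,j}H_ig_{i,j}^{-1}\cdot N\}_{i,j}$ remain pairwise distinct, so that thickening does not merge or collapse conjugates.

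Such an $N$ exists because every conjugate $g_{i,j}H_ig_{i,j}^{-1}$ is itself profinitely-closed (conjugation is a homeomorphism of $\Gamma$ in its profinite topology), combined with the standard identity $\overline{H}=\bigcap_{M}HM$, where $M$ ranges over the normal finite-index subgroups of $\Gamma$ (this follows from the fact that any finite-index subgroup contains its normal core, together with the description of $\overline{H}$ in Section \ref{sec:profinite}). Hence for each of the finitely many ``forbidden'' elements $w\in B_{\Gamma}(r)\setminus g_{i,j}H_ig_{i,j}^{-1}$ and for each pair of distinct conjugates occurring in (b), there exists a normal finite-index subgroup of $\Gamma$ witnessing the required separation. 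Almost-normality is essential here, as it guarantees that the number of conjugates, and hence the number of constraints, is finite; we take $N$ to be the intersection of all these finitely many normal finite-index subgroups.

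With this $N$, normality yields $g_{i,j}K_ig_{i,j}^{-1}=g_{i,j}H_ig_{i,j}^{-1}\cdot N$, so condition (b) gives a bijection $H_i^{\Gamma}\to K_i^{\Gamma}$, $g_{i,j}H_ig_{i,j}^{-1}\mapsto g_{i,j}K_ig_{i,j}^{-1}$, and condition (a) ensures this bijection preserves intersection with $B_{\Gamma}(r)$. Setting
\[
\nu_r=\sum_{i=1}^{k}\frac{m_i}{|K_i^{\Gamma}|}\sum_{K\in K_i^{\Gamma}}\delta_K,
\]
we obtain a $\Gamma$-invariant probability measure supported on finite-index subgroups of $\Gamma$ (since each $K_i\supseteq N$ has finite index), and the bijection above yields $\nu_r(C_{r,W})=\mu_0(C_{r,W})$ for every $W\subset B_{\Gamma}(r)$, giving $\nu_r\to\mu_0$. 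The main obstacle is the simultaneous realization of (a) and (b): one must guarantee that the thickening neither introduces new elements in small balls nor collapses distinct conjugates, which is exactly where the combination of almost-normality (finiteness of the set of conjugates) and profinite-closedness of every conjugate of $H_i$ intervenes.
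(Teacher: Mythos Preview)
Your argument is correct, but it differs from the paper's proof in how you approximate the atomic IRS by finite-index ones. The paper reduces one step further, to the IRS $\nu$ uniformly supported on a single conjugacy class $H^{\Gamma}$, and then approximates $H$ by a sequence $(H_n)$ of \emph{finite-index normal subgroups of $N_{\Gamma}(H)$} (not of $\Gamma$) containing $H$, setting $\nu_n=\frac{1}{k}\sum_{i=1}^{k}\delta_{^{g_i}H_n}$ where $g_1,\dotsc,g_k$ are coset representatives for $N_{\Gamma}(H)$ in $\Gamma$. Invariance of $\nu_n$ then follows directly from normality of $H_n$ in $N_{\Gamma}(H)$, with no need for any non-collapsing condition. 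Your approach instead thickens by a single normal finite-index $N\lhd\Gamma$, which forces you to impose condition~(b) to keep $|K_i^{\Gamma}|=|H_i^{\Gamma}|$; you handle this correctly via profinite-closedness of each conjugate, but it is an extra step that the paper's choice of approximants sidesteps. On the other hand, your construction treats all $H_i$ at once and yields $\nu_r$ matching $\mu_0$ \emph{exactly} on every $C_{r,W}$, whereas the paper's sequence only converges. Both routes rely on the same two inputs---countability of $\Sub(\Gamma)$ forcing atomicity, and profinite-closedness of almost-normal subgroups giving the needed finite-index approximants---so the difference is purely in bookkeeping. A minor remark: your condition~(b) across different $i$'s is stronger than required (only injectivity within each $H_i^{\Gamma}\to K_i^{\Gamma}$ is needed for the counting), but of course the stronger version is just as easy to arrange.
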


\begin{proof}
The latter statement follows from the former by Theorem
\ref{thm:main-theorem}(ii).
We turn to proving the former.
Since $\Sub(\Gamma)$ is countable, 
every IRS in $\IRS(\Gamma)$ is atomic,
and so all of its atoms are almost-normal subgroups.
Let $\mu\in\IRS(\Gamma)$.
Take a sequence $\left(\mu_n\right)_{n=1}^{\infty}$ in
$\IRS(\Gamma)$ of finitely-supported atomic IRSs, converging to $\mu$.
Since $\IRS(\Gamma)$ is metrizable, it suffices to prove that each
$\mu_n$ is co-sofic. Let $H$ be an almost-normal subgroup of $\Gamma$,
and let $\nu\in\IRS(\Gamma)$ be the atomic IRS assigning measure
$\frac{1}{\left|H^{\Gamma}\right|}$ to each conjugate of $H$.
It suffices to prove that $\nu$ is co-sofic.
Take representatives $g_1,\dotsc,g_k$ for the left cosets of
$N_{\Gamma}\left(H\right)$ in $\Gamma$.
Since $H$ is profinitely-closed in $\Gamma$ and
$\left[\Gamma:N_{\Gamma}\left(H\right)\right]<\infty$,
$H$ is profinitely-closed and normal in $N_{\Gamma}\left(H\right)$.
Therefore, there is a sequence
$\left(H_n\right)_{n=1}^{\infty}$ of finite-index normal subgroups of
$N_{\Gamma}\left(H\right)$ such that $H=\cap_{n=1}^{\infty}H_n$.
For $1\leq i\leq k$, ${^{g_i}H_n}\overset{n\rightarrow\infty}{\longrightarrow}{^{g_i}H}$, and so
$\delta_{^{g_i}H_n}\overset{n\rightarrow\infty}{\longrightarrow}\delta_{^{g_i}H}$.
Hence, writing $\nu_n=\frac{1}{k}\sum_{i=1}^k\delta_{^{g_i}H_n}$,
we have $\nu_n\overset{n\rightarrow\infty}{\longrightarrow}\nu$,
i.e., $\nu$ is a limit of finite-index random-subgroups.
It remains to show that each random-subgroup $\nu_n$ is an IRS.
Take $g\in\Gamma$. Let $\sigma\in\Sym\left(k\right)$ be the permutation for which
$gg_i N_{\Gamma}\left(H\right)=g_{\sigma(i)}N_{\Gamma}\left(H\right)$.
Since $H_n$ is normal in $N_{\Gamma}\left(H\right)$,
${^{gg_i}H_n}={^{g_{\sigma(i)}}H_n}$ for each $1\leq i\leq k$.
So, $g\cdot\nu_n=\frac{1}{k}\sum_{i=1}^k\delta_{^{gg_i}H_n}=\nu_n$,
i.e. $\nu_n$ is an IRS. Hence, $\nu$ is co-sofic.
\end{proof}

The following corollary provides a proof for part (i) of Theorem 
\ref{thm:intro-examples}.
\begin{cor}
\label{cor:polycyclic}
Virtually polycyclic groups are P-stable.
\end{cor}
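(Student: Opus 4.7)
The plan is to deduce the corollary immediately from Proposition \ref{prop:positive-application}, so I need to verify its three hypotheses for a virtually polycyclic group $\Gamma$: that $\Gamma$ is amenable, that $\Sub(\Gamma)$ is countable, and that every almost-normal subgroup of $\Gamma$ is profinitely-closed.

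Amenability is immediate since $\Gamma$ is polycyclic-by-finite, hence solvable-by-finite. For the countability of $\Sub(\Gamma)$, I would recall that virtually polycyclic groups are Noetherian: they satisfy the maximal condition on subgroups (a property that passes from polycyclic groups to finite extensions). Therefore every subgroup of $\Gamma$ is finitely generated, and since $\Gamma$ is itself finitely generated, it is countable, so it has only countably many finite subsets, hence countably many finitely-generated subgroups.

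The main content is showing that every almost-normal subgroup of $\Gamma$ is profinitely-closed, and here I would use Hirsch's classical theorem that every virtually polycyclic group is residually finite, together with the stability of the class of virtually polycyclic groups under taking subgroups and quotients. Let $H\leq\Gamma$ be almost-normal, set $N=N_{\Gamma}(H)$, so that $[\Gamma:N]<\infty$ and $H\trianglelefteq N$. Since $N$ is a finite-index subgroup of a virtually polycyclic group, $N$ itself is virtually polycyclic, and so is its quotient $N/H$. By Hirsch's theorem, $N/H$ is residually finite, which means that $H$ is the intersection of the finite-index normal subgroups of $N$ that contain $H$. These subgroups are finite-index in $N$, hence finite-index in $\Gamma$, exhibiting $H$ as an intersection of finite-index subgroups of $\Gamma$, i.e., $H\in\Sub_{\pclosed}(\Gamma)$.

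The main (minor) obstacle, or rather the key input, is invoking residual finiteness of virtually polycyclic quotients; everything else is bookkeeping. With the three hypotheses in place, Proposition \ref{prop:positive-application} yields that $\Gamma$ is P-stable, finishing the proof.
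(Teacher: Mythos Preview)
Your proof is correct and follows the same strategy as the paper: verify the three hypotheses of Proposition~\ref{prop:positive-application}. The only difference is in how you establish that almost-normal subgroups are profinitely-closed: the paper simply cites Mal'cev's theorem that virtually polycyclic groups are LERF (so \emph{every} subgroup is profinitely-closed), whereas you derive the almost-normal case directly from Hirsch's residual finiteness of virtually polycyclic groups together with closure of this class under quotients and finite-index subgroups. Your argument is in fact the standard way one proves the relevant instance of Mal'cev's LERF theorem, so the two approaches amount to the same thing.
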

\begin{proof}
Assume that $\Gamma$ is a virtually polycyclic group.
Then, every subgroup of $\Gamma$ is finitely-generated, and so
$\Sub(\Gamma)$ is countable.
Furthermore, $\Gamma$ is LERF (see \cite{malcev}) and amenable.
Hence, all of the conditions of Proposition \ref{prop:positive-application} are met.
\end{proof}
\begin{rem}
Nevertheless, not every solvable group is P-stable, even if it is residually-finite,
see Corollary \ref{cor:abels}.
\end{rem}

The following corollary provides a proof for part (ii) of Theorem \ref{thm:intro-examples}.
\begin{cor}
\label{cor:bs1n}
For every $n\in \ZZ$, the Baumslag-Solitar group $\BS\left(1,n\right)$ is P-stable.
\end{cor}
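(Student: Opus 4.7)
The plan is to deduce Corollary \ref{cor:bs1n} from Proposition \ref{prop:positive-application}. The cases $n\in\{-1,0,1\}$ are immediate from Corollary \ref{cor:polycyclic}, since $\BS(1,0)\cong\ZZ$, $\BS(1,1)\cong\ZZ^{2}$, and $\BS(1,-1)$ is the Klein bottle group, each virtually polycyclic. For $|n|\geq 2$, I will use the standard identification $\BS(1,n) = N \rtimes \langle x\rangle$ with $N\cong\ZZ[1/n]$ (additively) and $x$ acting on $N$ by multiplication by $n$, realized, for instance, via the faithful representation $x \mapsto \diag(n,1)$, $y\mapsto \bigl(\begin{smallmatrix}1 & 1\\0 & 1\end{smallmatrix}\bigr)$ into $\GL_{2}(\ZZ[1/n])$. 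This makes $\BS(1,n)$ metabelian, hence amenable, and residually finite by Malcev. The countability of $\Sub(\BS(1,n))$ reduces to that of $\Sub(\ZZ[1/n])$: the nonzero subgroups of $\ZZ[1/n]$ are classifiable as $\tfrac{m}{d}\ZZ$ or $m\cdot\ZZ[1/n]$ for suitable integers $m,d$, and any $H\leq\BS(1,n)$ is in turn determined by $H_{0} := H\cap N$, the image $HN/N\leq\ZZ$, and at most one coset in $N/H_{0}$ specifying a lift of a generator of $HN/N$.

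The main task is to classify the almost-normal subgroups of $\BS(1,n)$ and to check that each is profinitely closed. Since $N$ is abelian, any $H\leq N$ satisfies $N\leq N_{\BS(1,n)}(H)$, so $H$ is almost-normal precisely when $n^{k}H = H$ for some $k\geq 1$. Because $\ZZ[n^{-k}] = \ZZ[1/n]$, such an $H$ is forced to be closed under the full ring action, i.e.\ an ideal of $\ZZ[1/n]$, so $H = \{0\}$ or $H = rN$ with $r\in\ZZ_{\geq 1}$ coprime to $n$. The trivial subgroup is profinitely closed by residual finiteness, and $rN$ is profinitely closed because
\[
\bigcap_{k\geq 1}\bigl(rN \cdot \langle x^{k}\rangle\bigr) = rN,
\]
and each $rN\cdot\langle x^{k}\rangle$ is a finite-index subgroup of $\BS(1,n)$. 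For $H\not\leq N$, write $H_{0} = H\cap N$ and let $k\geq 1$ generate the image of $H$ in $\BS(1,n)/N \cong \ZZ$; closure under the group law forces $n^{k}H_{0} = H_{0}$. The case $H_{0} = \{0\}$ gives $H$ infinite cyclic, and the direct computation $y^{\beta}(x^{k}y^{\alpha})y^{-\beta} = x^{k}y^{\alpha + \beta(n^{-k}-1)}$ shows the conjugacy class of $H$ is parametrized by the infinite subgroup $(1-n^{k})\ZZ[1/n]\leq N$, so $H$ is not almost-normal. Otherwise $H_{0} = rN$, and a short index computation yields $[\BS(1,n):H] = rk < \infty$, so $H$ is of finite index and hence profinitely closed trivially.

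The main obstacle is the classification of almost-normal subgroups not contained in $N$: ruling out almost-normality of the infinite cyclic ones $\langle x^{k}y^{\alpha}\rangle$ relies on the explicit conjugation formula together with the fact that $1-n^{k}\neq 0$ generates an infinite subgroup of $\ZZ[1/n]$ whenever $|n|\geq 2$. A secondary, but conceptually important, step is the profinite-closedness of the infinite-index subgroup $rN$, which requires exhibiting the correct filtration by $rN\cdot\langle x^{k}\rangle$ rather than attempting to approximate $rN$ by subgroups of $N$ alone.
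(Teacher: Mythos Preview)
Your overall strategy---verifying the three hypotheses of Proposition~\ref{prop:positive-application}---is the same as the paper's, and your explicit classification of almost-normal subgroups is correct and complete. However, there is one genuine error: your description of the nonzero subgroups of $\ZZ[1/n]$ as either $\tfrac{m}{d}\ZZ$ or $m\cdot\ZZ[1/n]$ is false when $n$ has at least two distinct prime divisors. For example, $\ZZ[1/2]\subset\ZZ[1/6]$ is neither cyclic nor an ideal of $\ZZ[1/6]$. Your classification is valid only when $n$ is (up to sign) a prime power. This affects only the countability claim for $\Sub(\ZZ[1/n])$, which is still true but needs a different justification; one clean fix is to note that any nonzero $H\leq\ZZ[1/n]$ satisfies $\tfrac{1}{m}H\supseteq\ZZ$ for $m\ZZ=H\cap\ZZ$, so it suffices to count subgroups of $\ZZ[1/n]/\ZZ\cong\bigoplus_{p\mid n}\ZZ(p^{\infty})$, and each Pr\"ufer summand has only countably many subgroups. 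The paper instead argues via the indices $l_i=[\tfrac{1}{n^i}\ZZ:H\cap\tfrac{1}{n^i}\ZZ]$ and the recursion $l_i=l_{i+1}/\gcd(n,l_{i+1})$.

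On the profinite-closedness hypothesis the two proofs diverge in style. The paper avoids any classification by invoking that $\BS(1,n)$ is a constructible solvable group, a class closed under quotients and finite-index subgroups and consisting of residually-finite groups; this immediately forces every almost-normal subgroup to be profinitely closed. Your route is more elementary and self-contained: you classify the almost-normal subgroups completely (trivial, $rN$, or finite index) and verify closedness by hand. The explicit filtration $rN=\bigcap_{k\geq 1} rN\cdot\langle x^{k}\rangle$ is a nice concrete witness that the paper's argument hides inside the citation. Either approach works; yours trades a black-box citation for a short computation, at the cost of the case analysis.
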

\begin{proof}
Let $\Gamma=\BS\left(1,n\right)$.
Note that $\Gamma\cong\ZZ\left[\frac{1}{n}\right]\rtimes\ZZ$ where $1\in\ZZ$ acts on
$\ZZ\left[\frac{1}{n}\right]$ by multiplication by $n$.
We use Proposition \ref{prop:positive-application} to show that $\Gamma$ is P-stable.
First, $\Gamma$ is amenable since it is solvable.
The group $\Gamma$ is an example of a constructible solvable group.
Every constructible solvable group is residually-finite, and
the class of constructible solvable groups is closed under
taking quotients and finite-index subgroups
(see \cite{baumslag-bieri} or Section 11.2 of \cite{lennox-robinson}).
Therefore, every almost-normal subgroup of $\Gamma$ is
profinitely-closed.
It remains to show that $\Sub(\Gamma)$ is countable.
In general, for a countable group $G$ and $N\lhd G$,
if $G/N$ is Noetherian (i.e. every subgroup is finitely-generated)
and $\Sub\left(N\right)$ is countable,
then $\Sub\left(G\right)$ is countable.
In our case, taking $N=\ZZ\left[\frac{1}{n}\right]$, $\Gamma/N$ is infinite cyclic,
and $\Sub\left(N\right)$ is countable.
To see that $N=\ZZ\left[\frac{1}{n}\right]$ indeed has only
countably many subgroups,
we argue as follows. Let $H$ be a subgroup of $\ZZ\left[\frac{1}{n}\right]$.
Then, $H$ is determined by the sequence
$\left(H_i\right)_{i=0}^{\infty}$, where
$H_i=H\cap\frac{1}{n^i}\ZZ$.
The latter is determined by the sequence
$\left(l_i\right)_{i=0}^{\infty}$, where
$l_i=\left[\frac{1}{n^i}\ZZ:H\cap\frac{1}{n^i}\ZZ\right]$.
Assuming, $H\neq\left\{1\right\}$, each $l_i$ is a positive integer
(i.e. $l_i\neq\infty$).
For each $i\geq 0$,
$l_i=l_{i+1}/\gcd\left(n,l_{i+1}\right)$.
This shows that the sequence of sets of prime divisors of the
elements of $\left(l_i\right)_{i=0}^{\infty}$
eventually stabilizes at some finite set of primes
$\left\{p_1,\dotsc,p_m\right\}$.
Let $n_0\geq 0$ be the minimal non-negative integer for which $l_{n_0}$ is
divisible by $p_1\cdots p_m$. Write $q=\gcd\left(n,l_{i+1}\right)$.
Then, $l_i=l_{n_0}\cdot q^{i-n_0}$ for each $i\geq n_0$.
The sequence $\left(l_i\right)_{i=0}^{\infty}$ is determined
by $n_0$, $l_{n_0}$ and $q$. Subsequently,
$\ZZ\left[\frac{1}{n}\right]$ has only countably
many subgroups.
\end{proof}


\begin{prop}
\label{prop:stable-is-fganpc}Assume that $\Gamma$ is
P-stable. Let $H$ be an almost-normal subgroup
of $\Gamma$, such that the IRS $\mu\in\IRS(\Gamma)$, assigning
probability $\frac{1}{\left|H^\Gamma\right|}$ to each conjugate of $H$, is co-sofic in $\FF$. Then, $H$ is a limit in $\Sub\left(\Gamma\right)$
of finite-index subgroups. If, in addition, $H$ is finitely-generated, then $H$ is
profinitely-closed.
\end{prop}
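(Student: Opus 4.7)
The proof is a direct combination of results already established in the paper. The plan is as follows.

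First, I would invoke Theorem \ref{thm:main-theorem}(i): since $\Gamma$ is P-stable and $\mu \in \IRS(\Gamma)$ is assumed to be co-sofic in $\FF$, we conclude that $\mu$ is co-sofic in $\Gamma$, i.e., $\mu$ is a limit in $\IRS(\Gamma)$ of finite-index IRSs. This is the whole content of the hypotheses being well-chosen to feed the main theorem.

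Next, I would apply Lemma \ref{lem:co-sofic-by-support} to the co-sofic IRS $\mu$ to deduce $\supp(\mu) \subset \overline{\Sub_{\findex}(\Gamma)}$. Since $\mu$ is atomic with atoms exactly the conjugates of $H$, each weighted by $1/|H^\Gamma| > 0$, we have $H \in \supp(\mu)$. Consequently, $H \in \overline{\Sub_{\findex}(\Gamma)}$, which is exactly the statement that $H$ is a limit in $\Sub(\Gamma)$ of finite-index subgroups, proving the first conclusion.

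For the second conclusion, I would invoke the left inclusion of Lemma \ref{lem:profinite-vs-chabauty}:
\[
\overline{\Sub_{\findex}(\Gamma)} \cap \Sub_{\fg}(\Gamma) \subset \Sub_{\pclosed}(\Gamma)\text{.}
\]
If $H$ is assumed finitely-generated, then $H$ lies in both sets on the left, and is therefore profinitely-closed in $\Gamma$. This completes the argument.

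There is no real obstacle here: the proposition is essentially the packaging of Theorem \ref{thm:main-theorem}(i) in the special case of an atomic IRS supported on a single conjugacy class, combined with the two bookkeeping lemmas relating the Chabauty-topology closure of the finite-index subgroups to the profinite topology. The work was done earlier; this proposition is the bridge that will let us convert instability into the existence of a finitely-generated almost-normal subgroup that fails to be profinitely-closed, as the discussion after Theorem \ref{thm:application-positive} foreshadowed.
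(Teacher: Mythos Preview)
Your proposal is correct and follows essentially the same approach as the paper: invoke Theorem~\ref{thm:main-theorem}(i) to get that $\mu$ is co-sofic in $\Gamma$, apply Lemma~\ref{lem:co-sofic-by-support} together with $H\in\supp(\mu)$ to conclude $H\in\overline{\Sub_{\findex}(\Gamma)}$, and use Lemma~\ref{lem:profinite-vs-chabauty} for the finitely-generated case.
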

\begin{proof}
The latter statement follows from the former by Lemma \ref{lem:profinite-vs-chabauty}.
We turn to proving the former.
By Theorem \ref{thm:main-theorem}(i), $\mu$ is co-sofic in $\Gamma$.
Since $H\in\supp\left(\mu\right)$, Lemma \ref{lem:co-sofic-by-support}
implies that $H\in\overline{\Sub_{\findex}(\Gamma)}$.
\end{proof}
Specializing to $H=\left\{1\right\}$ in Proposition
\ref{prop:stable-is-fganpc}, we see that a sofic
P-stable group must be residually-finite, as proved by
Glebsky and Rivera (Theorem 2 of \cite{glebsky-rivera})
and by Arzhantseva and \Pau (Theorem 7.2(ii) of
\cite{arzhantseva-paunescu}).

By Proposition \ref{prop:stable-is-fganpc} and
Proposition \ref{prop:amenable-co-sofic},
if $\Gamma$ is amenable and P-stable,
then every almost-normal subgroup of $\Gamma$
is a limit in $\Sub(\Gamma)$ of finite-index subgroups.
If the converse is true as well (under the amenability assumption),
it would give a positive answer to the following question:
\begin{question}
\label{question:amenable-lerf-is-stable}
Is every amenable LERF group P-stable?
\end{question}
A related question was asked by Arzhantseva and \Pau
(see Conjecture 1.2 in \cite{arzhantseva-paunescu}):
$(\ast)$ Among finitely-presented
amenable groups, is P-stability equivalent to the following:
every normal subgroup of $\Gamma$ is profinitely-closed?
In fact, Conjecture 1.2 in \cite{arzhantseva-paunescu}
was stated differently, without assuming amenability, but $(\ast)$ is an equivalent
formulation under the amenability assumption
(see Theorem 7.2(iii) of \cite{arzhantseva-paunescu}).

\vspace{5mm}
Arzhantseva and \Pau asked whether every finitely-presented amenable
residually-finite group is P-stable (see the paragraph before Theorem 7.2 of \cite{arzhantseva-paunescu} and Theorem 7.2(iii) of the same paper).
We recall the construction of Abels's groups and show that they provide a
negative answer.
Fix a prime $p$. Abels's group (for the prime $p$) is
\[
A_{p}=\left\{ \left(\begin{array}{cccc}
1 & * & * & *\\
 & p^{m} & * & *\\
 &  & p^{n} & *\\
 &  &  & 1
\end{array}\right)\mid m,n\in\ZZ\right\} \leq\GL_{4}\left(\ZZ\left[\frac{1}{p}\right]\right)
\]

\begin{cor}
\label{cor:abels}Abels's group $A_{p}$ is finitely-presented,
amenable and residually-finite, but not P-stable.
\end{cor}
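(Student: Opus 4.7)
The plan is to verify that $A_p$ satisfies the three listed positive properties and then apply Proposition \ref{prop:stable-is-fganpc} to rule out P-stability. The first three properties are standard: $A_p$ is solvable (being upper-triangular), hence amenable; as a finitely-generated linear group over $\ZZ[\tfrac{1}{p}]$, it is residually-finite by Malcev's theorem; and Abels's original theorem establishes finite presentability. I would cite these rather than reprove them.

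For the key point, non-P-stability, the strategy is to produce a finitely-generated almost-normal subgroup of $A_p$ which fails to be profinitely-closed, and then invoke Proposition \ref{prop:stable-is-fganpc} in its contrapositive form. Because $A_p$ is amenable, Proposition \ref{prop:amenable-co-sofic} guarantees that every IRS of $A_p$ is automatically co-sofic in $\FF$, so the extra hypothesis on $\mu$ in Proposition \ref{prop:stable-is-fganpc} is free; the only remaining obligation is to exhibit the subgroup $H$.

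The natural candidate is a cyclic subgroup of the center. A direct matrix computation shows that $Z(A_p)$ consists of matrices of the form $I+tE_{14}$ with $t\in \ZZ[\tfrac{1}{p}]$, and is therefore isomorphic to $\ZZ[\tfrac{1}{p}]$. Let $z_0$ be the central element corresponding to $1\in\ZZ[\tfrac{1}{p}]$, and let $w$ correspond to $1/p$, so that $w^p=z_0$ and $w\notin\langle z_0\rangle$. Set $H=\langle z_0\rangle\cong\ZZ$; this is a finitely-generated normal (hence almost-normal) subgroup. To see $H$ is not profinitely-closed, I would argue that $w$ lies in every finite-index subgroup $N\trianglelefteq A_p$ that contains $z_0$: in the finite quotient $Q=A_p/N$, the image $\bar Z$ of $Z(A_p)\cong \ZZ[\tfrac{1}{p}]$ is a $p$-divisible subgroup of a finite abelian group, and such a subgroup is necessarily $p$-torsion-free; since $\bar w^p=\overline{z_0}=1$, this forces $\bar w=1$ and hence $w\in N$. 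Thus $w$ belongs to the profinite closure of $H$ while $w\notin H$, so $H$ is not profinitely-closed.

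Putting it together, if $A_p$ were P-stable then by Proposition \ref{prop:amenable-co-sofic} together with Proposition \ref{prop:stable-is-fganpc}, the finitely-generated almost-normal subgroup $H$ would be profinitely-closed, contradicting the previous paragraph. The main obstacle, and the only genuinely content-bearing step, is the $p$-divisibility argument for the center's image in finite quotients; the rest is bookkeeping and invocation of the external results on $A_p$.
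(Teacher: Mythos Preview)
Your proposal is correct and follows essentially the same approach as the paper: both exhibit the central cyclic subgroup $H\cong\ZZ$ inside $Z(A_p)\cong\ZZ[1/p]$ and use $p$-divisibility of $\ZZ[1/p]$ to show $H$ is not profinitely-closed, then apply Proposition~\ref{prop:stable-is-fganpc}. The only cosmetic difference is that the paper reduces to showing $\ZZ[1/p]/\ZZ$ has no nontrivial finite quotients, whereas you directly exhibit a single element $w\notin H$ lying in every finite-index normal subgroup containing $H$; both arguments rest on the same $p$-divisibility observation.
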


\begin{proof}
The group $A_p$ is amenable since it is solvable.
It is residually\hyp finite since it is finitely-generated and linear.
In fact, in \cite{abels}, Abels showed that $A_p$ is finitely-presented.
By Proposition \ref{prop:stable-is-fganpc}, to show that $A_p$
is not P-stable it suffices to exhibit a finitely-generated almost-normal
subgroup $H$ of $A_{p}$ which is not profinitely-closed. Note that
the center of $A_{p}$ is
\[
Z(A_p)=\left\{ \left(\begin{array}{cccc}
1 & 0 & 0 & x\\
 & 1 & 0 & 0\\
 &  & 1 & 0\\
 &  &  & 1
\end{array}\right)\mid x\in\ZZ\left[\frac{1}{p}\right]\right\} \cong\ZZ\left[\frac{1}{p}\right]
\]
 Let $H=\left\{ \left(\begin{array}{cccc}
1 & 0 & 0 & n\\
 & 1 & 0 & 0\\
 &  & 1 & 0\\
 &  &  & 1
\end{array}\right)\mid n\in\ZZ\right\} \cong\ZZ$. Since $H$ is cyclic and central, we are left with showing that
$H$ is not profinitely-closed in $A_{p}$. In general, if a group
$G$ is endowed with its profinite topology, then the subspace
topology on a subgroup $L\leq G$ is coarser (or equal)
to the profinite topology of $L$. Therefore, it suffices to
prove that $H$ is not closed in the profinite topology of $Z(A_p)$.
Consider the inclusion $\ZZ\subset\ZZ\left[\frac{1}{p}\right]$.
It suffices to show that the only finite quotient of $L=\ZZ\left[\frac{1}{p}\right]/\ZZ$
is the trivial group.
Note that for every $x\in L$, there is $n\in\NN$ such that $p^{n}x=0$, i.e. $L$ is a $p$-group, so a finite quotient of $L$ must be a finite $p$-group, and if this finite quotient is non-trivial, then $L$ has a quotient which is cyclic of order $p$. At the same time, $L$ is $p$-divisible, i.e. every element is a $p$-th multiple, and hence so is every quotient of $L$. But the cyclic group of order $p$ is not $p$-divisible.

\end{proof}

%
%
%

\begin{bibdiv}
\begin{biblist}

\bib{abels}{incollection}{
      author={Abels, Herbert},
       title={An example of a finitely presented solvable group},
        date={1979},
   booktitle={Homological group theory ({P}roc. {S}ympos., {D}urham, 1977)},
      series={London Math. Soc. Lecture Note Ser.},
      volume={36},
   publisher={Cambridge Univ. Press, Cambridge-New York},
       pages={205\ndash 211},
}

\bib{agv}{article}{
      author={Ab\'ert, Mikl\'os},
      author={Glasner, Yair},
      author={Vir\'ag, B\'alint},
       title={Kesten's theorem for invariant random subgroups},
        date={2014},
        ISSN={0012-7094},
     journal={Duke Math. J.},
      volume={163},
      number={3},
       pages={465\ndash 488},
         url={https://doi.org/10.1215/00127094-2410064},
}

\bib{arzhantseva-paunescu}{article}{
      author={Arzhantseva, Goulnara},
      author={P\u{a}unescu, Liviu},
       title={Almost commuting permutations are near commuting permutations},
        date={2015},
        ISSN={0022-1236},
     journal={J. Funct. Anal.},
      volume={269},
      number={3},
       pages={745\ndash 757},
         url={https://doi.org/10.1016/j.jfa.2015.02.013},
}

\bib{baumslag-bieri}{article}{
      author={Baumslag, Gilbert},
      author={Bieri, Robert},
       title={Constructable solvable groups},
        date={1976},
        ISSN={0025-5874},
     journal={Math. Z.},
      volume={151},
      number={3},
       pages={249\ndash 257},
         url={https://doi.org/10.1007/BF01214937},
}

\bib{becker-lubotzky}{article}{
      author={Becker, Oren},
      author={Lubotzky, Alexander},
       title={Testability of permutation equations and group theory, in
  preparation},
}

\bib{becker-lubotzky-property-t}{article}{
      author={Becker, Oren},
      author={Lubotzky, Alexander},
       title={Stability in permutations and property (T), in
  preparation},
}

\bib{cutolo-smith}{article}{
      author={Cutolo, Giovanni},
      author={Smith, Howard},
       title={Groups with countably many subgroups},
        date={2016},
        ISSN={0021-8693},
     journal={J. Algebra},
      volume={448},
       pages={399\ndash 417},
         url={https://doi.org/10.1016/j.jalgebra.2015.10.007},
}

\bib{DGLT}{article}{
      author={{De Chiffre}, Marcus},
      author={{Glebsky}, Lev},
      author={{Lubotzky}, Alexander},
      author={{Thom}, Andreas},
       title={Stability, cohomology vanishing, and non-approximable groups},
      eprint={https://arxiv.org/abs/1711.10238},
}

\bib{elek2012}{article}{
      author={Elek, G\'abor},
       title={Finite graphs and amenability},
        date={2012},
        ISSN={0022-1236},
     journal={J. Funct. Anal.},
      volume={263},
      number={9},
       pages={2593\ndash 2614},
         url={https://doi.org/10.1016/j.jfa.2012.08.021},
}

\bib{gelander-notes}{article}{
      author={{Gelander}, Tsachik},
       title={{A lecture on Invariant Random Subgroups}},
      eprint={https://arxiv.org/abs/1503.08402},
}

\bib{gelander-icm}{article}{
      author={{Gelander}, Tsachik},
       title={{A view on invariant random subgroups}},
        date={2018-08},
     journal={Proceedings of the International Congress of Mathematicians, Rio
  de Janeiro, Brazil},
        note={\emph{to appear}},
}

\bib{glebsky2010}{article}{
      author={{Glebsky}, Lev},
       title={{Almost commuting matrices with respect to normalized
  Hilbert-Schmidt norm}},
      eprint={https://arxiv.org/abs/1002.3082},
}

\bib{glebsky-rivera}{article}{
      author={Glebsky, Lev},
      author={Rivera, Luis~Manuel},
       title={Almost solutions of equations in permutations},
        date={2009},
        ISSN={1027-5487},
     journal={Taiwanese J. Math.},
      volume={13},
      number={2A},
       pages={493\ndash 500},
         url={https://doi.org/10.11650/twjm/1500405351},
}

\bib{kechris}{book}{
    author = {Kechris, Alexander S.},
     title = {Classical descriptive set theory},
    series = {Graduate Texts in Mathematics},
    volume = {156},
 publisher = {Springer-Verlag, New York},
      date = {1995},
      ISBN = {0-387-94374-9},
       URL = {https://doi.org/10.1007/978-1-4612-4190-4},
}

\bib{KM}{book}{
    author = {Kechris, Alexander S.},
    author = {Miller, Benjamin D.},
     title = {Topics in orbit equivalence},
    series = {Lecture Notes in Mathematics},
    volume = {1852},
 publisher = {Springer-Verlag, Berlin},
      date = {2004},
      ISBN = {3-540-22603-6},
       URL = {https://doi.org/10.1007/b99421},
}

\bib{konig-leitner-neftin}{article}{
      author={{K{\"o}nig}, J.},
      author={{Leitner}, A.},
      author={{Neftin}, D.},
       title={{Almost-Regular Dessins on a Sphere and Torus}},
      eprint={https://arxiv.org/abs/1709.06869},
}

\bib{lennox-robinson}{book}{
      author={Lennox, John~C.},
      author={Robinson, Derek J.~S.},
       title={The theory of infinite soluble groups},
      series={Oxford Mathematical Monographs},
   publisher={The Clarendon Press, Oxford University Press, Oxford},
        date={2004},
        ISBN={0-19-850728-3},
         url={https://doi.org/10.1093/acprof:oso/9780198507284.001.0001},
}

\bib{lubotzky-expanders}{book}{
    author = {Lubotzky, Alexander},
     title = {Discrete groups, expanding graphs and invariant measures},
    series = {Progress in Mathematics},
    volume = {125},
      note = {With an appendix by Jonathan D. Rogawski},
 publisher = {Birkh\"auser Verlag, Basel},
      date = {1994},
     pages = {xii+195},
      isbn = {3-7643-5075-X},
       URL = {https://doi.org/10.1007/978-3-0346-0332-4},
}

\bib{malcev}{article}{
      author={Mal'cev, Anatoly~I.},
       title={On homomorphisms onto finite groups},
        date={1958},
     journal={Ivanov. Gos. Ped. Inst. Ucen. Zap.},
      volume={18},
       pages={49\ndash 60},
        note={(English translation: Transl., Ser. 2, Am. Math. Soc. {\bf 119},
  67-79, 1983)},
}

\bib{newman-sohler2011}{incollection}{
      author={Newman, Ilan},
      author={Sohler, Christian},
       title={Every property of hyperfinite graphs is testable [extended
  abstract]},
        date={2011},
   booktitle={S{TOC}'11---{P}roceedings of the 43rd {ACM} {S}ymposium on
  {T}heory of {C}omputing},
   publisher={ACM, New York},
       pages={675\ndash 684},
         url={https://doi.org/10.1145/1993636.1993726},
}

\bib{newman-sohler2013}{article}{
      author={Newman, Ilan},
      author={Sohler, Christian},
       title={Every property of hyperfinite graphs is testable},
        date={2013},
        ISSN={0097-5397},
     journal={SIAM J. Comput.},
      volume={42},
      number={3},
       pages={1095\ndash 1112},
         url={https://doi.org/10.1137/120890946},
}

\bib{OW}{article}{
      author={Ornstein, Donald~S.},
      author={Weiss, Benjamin},
       title={Ergodic theory of amenable group actions. {I}. {T}he {R}ohlin
  lemma},
        date={1980},
        ISSN={0273-0979},
     journal={Bull. Amer. Math. Soc. (N.S.)},
      volume={2},
      number={1},
       pages={161\ndash 164},
         url={https://doi.org/10.1090/S0273-0979-1980-14702-3},
}

\bib{schramm2008}{article}{
      author={Schramm, Oded},
       title={Hyperfinite graph limits},
        date={2008},
        ISSN={1935-9179},
     journal={Electron. Res. Announc. Math. Sci.},
      volume={15},
       pages={17\ndash 23},
}

\bib{stuck-zimmer}{article}{
      author={Stuck, Garrett},
      author={Zimmer, Robert~J.},
       title={Stabilizers for ergodic actions of higher rank semisimple
  groups},
        date={1994},
        ISSN={0003-486X},
     journal={Ann. of Math. (2)},
      volume={139},
      number={3},
       pages={723\ndash 747},
         url={https://doi.org/10.2307/2118577},
}

\bib{voiculescu}{article}{
      author={Voiculescu, Dan},
       title={Asymptotically commuting finite rank unitary operators without
  commuting approximants},
        date={1983},
        ISSN={0001-6969},
     journal={Acta Sci. Math. (Szeged)},
      volume={45},
      number={1-4},
       pages={429\ndash 431},
}

\end{biblist}
\end{bibdiv}

\end{document}